\documentclass{amsart}
%

%

\usepackage{amsmath}%
\usepackage{amsfonts}%
\usepackage{amssymb}%
\usepackage{amsthm}
\usepackage{units}
\usepackage{enumerate} 
\usepackage{upgreek}
\usepackage{graphicx}
\usepackage{multicol}
\usepackage[usenames,dvipsnames]{xcolor}

\usepackage{array} 
\usepackage{tabulary} 
\usepackage{multirow} 

\numberwithin{equation}{section}

\usepackage{tikz}
\usepackage{tikz-cd}
\usetikzlibrary{matrix,arrows,decorations.pathmorphing}
\usepackage{hyperref} 
\usepackage{geometry}
\geometry{a4paper,top=3cm,bottom=3cm,left=3cm,right=3cm,heightrounded}

%

\colorlet{ColRecap}{Black} 
\colorlet{ColDefB}{Black} 
\colorlet{ColRmkB}{Black} 
\colorlet{ColExB}{Black} 
\colorlet{ColThmB}{Black} 
\colorlet{ColApp}{Black} 

\colorlet{ColThm}{Black}
\colorlet{ColDef}{Black}
\colorlet{ColRmk}{Black}
\colorlet{ColEx}{Black}
\colorlet{ColRcp}{Black}
\colorlet{ColPrel}{Black}
\colorlet{ColSA}{Black}
\colorlet{ColAss}{Black}

%

\DeclareMathAlphabet{\mathpzc}{OT1}{pzc}{m}{it}

	\newcommand{\Omhol}{\Upomega} 

\DeclareMathOperator{\Id}{Id}

\DeclareMathOperator{\im}{im}

\DeclareMathOperator{\coker}{coker}
\DeclareMathOperator{\rk}{rk}
\DeclareMathOperator{\point}{pt} 


\DeclareMathOperator{\Hom}{Hom}
\DeclareMathOperator{\End}{End}

\DeclareMathOperator{\Ob}{Ob}

\DeclareMathOperator{\Ant}{Ant}
\DeclareMathOperator{\Sym}{Sym}


\DeclareMathOperator{\op}{op}
\DeclareMathOperator{\diag}{diag}

\DeclareMathOperator{\ext}{ext}
\DeclareMathOperator{\Ext}{Ext}




\DeclareMathOperator{\Hilb}{Hilb}


\DeclareMathOperator{\Pic}{Pic}


\DeclareMathOperator{\num}{num}
\DeclareMathOperator{\M}{M}

\DeclareMathOperator{\GL}{GL}

\DeclareMathOperator{\G}{G}


\DeclareMathOperator{\Rep}{\mathbf{Rep}} 
\DeclareMathOperator{\Coh}{\mathbf{Coh}} 


\DeclareMathOperator{\dimvec}{\underline{dim}}
\DeclareMathOperator{\cc}{c} 
\DeclareMathOperator{\ch}{ch} 

\DeclareMathOperator{\lleq}{(\leq)\!\!<} 
\DeclareMathOperator{\ggeq}{(\geq)\!\!>}

\DeclareMathOperator{\prqG}{\preceq_\textup{G}}
\DeclareMathOperator{\suqG}{\succeq_\textup{G}}
\DeclareMathOperator{\prG}{\prec_\textup{G}}
\DeclareMathOperator{\suG}{\succ_\textup{G}}
\DeclareMathOperator{\pprG}{(\preceq_\textup{G})\!\!\prec_\textup{G}}


\newcommand*{\triple}[2][.1ex]{%
  \mathrel{\vcenter{\offinterlineskip%
  \hbox{$#2$}\vskip#1\hbox{$#2$}\vskip#1\hbox{$#2$}}}}

\newcommand*{\triplerightarrow}{\triple{\rightarrow}} 

%

\theoremstyle{plain}
\newtheorem{theorem}{Theorem}[section]
\newtheorem{lemma}[theorem]{Lemma}
\newtheorem{corollary}[theorem]{Corollary}
\newtheorem{proposition}[theorem]{Proposition}

\theoremstyle{definition}
\newtheorem{definition}[theorem]{Definition}
\newtheorem{remark}[theorem]{Remark}

\newtheorem{example}[theorem]{Example}
\newtheorem{examples}[theorem]{Examples}

\hfuzz=10pt 

\begin{document}

%
%

\title{Moduli of semistable sheaves as quiver moduli}
\author{Andrea Maiorana}

\begin{abstract}
In the 1980s Dr{\'e}zet and Le Potier realized moduli spaces of Gieseker-semistable sheaves on $\mathbb{P}^2$ as what are now called quiver moduli spaces. We discuss how this construction can be understood using t-structures and exceptional collections on derived categories, and how it can be extended to a similar result on $\mathbb{P}^1\times\mathbb{P}^1$.
\end{abstract}

\maketitle


%
%
\section{Introduction}
\subsection{Monads and moduli of sheaves}

A recurring theme in algebraic geometry is the study of moduli spaces, varieties whose points parameterize geometric objects of some kind. The first general construction of moduli spaces of vector bundles on a projective curve was given by Mumford using GIT, and then extended by Seshadri, Gieseker, Maruyama and Simpson among others to prove the existence, as projective schemes, of moduli spaces of semistable coherent sheaves on projective varieties of any dimension. We refer to the books \cite{LePo97Lect,HuyLeh10Geo} for comprehensive guides to the subject.


By the late 1970s, some people were studying an alternative and much more explicit way to construct moduli spaces of bundles over projective spaces: they were using \emph{monads}, namely complexes $\mathcal{A}\to\mathcal{B}\to\mathcal{C}$ of vector bundles with nonzero cohomology only at the middle term. This concept was first used by Horrocks \cite{Horr64Vector}; Barth \cite{Barth77Mod} showed that every stable bundle $\mathcal{E}$ of rank 2, degree 0 and $\cc_2=k$ on the complex projective plane $\mathbb{P}^2=\mathbb{P}_\mathbb{C}(Z)$ is isomorphic to the middle cohomology of a monad in which $\mathcal{A},\mathcal{B},\mathcal{C}$ are fixed bundles and the maps between them only depend on a certain \emph{Kronecker module} $\alpha\in\Hom_\mathbb{C}(\mathbb{C}^k\otimes Z^\vee,\mathbb{C}^k)$ constructed from $\mathcal{E}$. Moreover, this construction estabilishes a bijection between such bundles $\mathcal{E}$ up to isomorphism and elements of a subvariety $\tilde{M}\subset\Hom_\mathbb{C}(\mathbb{C}^k\otimes Z^\vee,\mathbb{C}^k)$ up to the action of $\GL_k(\mathbb{C})$. This means that we have a surjective morphism $\tilde{M}\to\M^\textup{st}$ identifying the moduli space $\M^\textup{st}$ of stable bundles with the given numerical invariants as a $\GL_k(\mathbb{C})$-quotient of $\tilde{M}$. By analyzing the variety $\tilde{M}$, Barth was then able to prove rationality and irreducibility of $\M^\textup{st}$. Then Barth and Hulek extended this construction first to all moduli spaces of rank 2 bundles \cite{BaHu78Mo,Hulek79Stable}, and then to moduli of bundles with any rank and zero degree \cite{Hulek80OnThe}. These works were also fundamental to find explicit constructions of instantons, or anti self-dual Yang-Mills connections \cite{ADHM78Constr,Don84Ins}.


These techniques were improved by Beilinson \cite{Beil78Coh}, whose description of the bounded derived category of coherent sheaves on projective spaces gave a systematic way to produce monads for semistable sheaves, as explained e.g.\ in \cite[Ch.\ 2, \S4]{OSS80Ve}. In this way, Dr{\'e}zet and Le Potier generalized in \cite{DreLeP85Fibr} the works of Barth and Hulek to all Gieseker-semistable torsion-free sheaves on $\mathbb{P}^2$. They showed that, after imposing an analogue of Gieseker semistability, ``Kronecker'' complexes
\begin{equation}\label{Eq: Kron cplx intro}
V_{-1}\otimes\mathcal{O}_{\mathbb{P}^2}(-1)\longrightarrow V_0\otimes\Omhol_{\mathbb{P}^2}^1(1)\longrightarrow V_1\otimes\mathcal{O}_{\mathbb{P}^2}
\end{equation}
are forced to be monads, and taking their middle cohomology gives Gieseker-semistable sheaves. Moreover, this gives a bijective correspondence between isomorphism classes of semistable Kronecker complexes and isomorphism classes of semistable torsion-free sheaves, having fixed a class $v\in K_0(\mathbb{P}^2)$. Thus the moduli space $\M^\textup{ss}_{\mathbb{P}^2}(v)$ of such sheaves is a quotient of the semistable locus $Y^\textup{ss}\subset Y$ in the affine variety $Y$ parameterizing Kronecker complexes by the action of $G_V:=\prod_i\GL_\mathbb{C}(V_i)$.

Now we can observe that Kronecker complexes can be seen as representations of the \emph{Beilinson quiver}
\begin{center}$B_3$: \begin{tikzcd}
-1\arrow[bend left=50]{r}\arrow{r}\arrow[bend right=50]{r} &0\arrow[bend left=50]{r}\arrow{r}\arrow[bend right=50]{r} &1
\end{tikzcd}\end{center}
constrained by some relations, which we encode with an ideal $J'\subset\mathbb{C}Q$ in the path algebra of $Q$, forcing the maps in Eq.\ \eqref{Eq: Kron cplx intro} to form a complex. In fact, after fixing the dimension vector $d^v:=(\dim_\mathbb{C}V_{-1},\dim_\mathbb{C}V_0,\dim_\mathbb{C}V_1)$ of the Kronecker complexes, the above notion of Gieseker-like stability coincides with the usual concept of $\theta_v$-stability for quiver representations, for some $\theta_v\in\mathbb{Z}^{\{-1,0,1\}}$. The latter was introduced by King \cite{King94Mod}, who also showed that moduli spaces of $d^v$-dimensional $\theta_v$-semistable representations, which we denote by $\M^\textup{ss}_{B_3,J',\theta_v}(d^v)$, can be constructed via GIT: the subset $Y^\textup{ss}\subset Y$ becomes the semistable locus of a linearization $\mathcal{L}_v$ of the action of $G_V$, so we recover $\M^\textup{ss}_{\mathbb{P}^2}(v)$ as the GIT quotient $Y^\textup{ss}/\!/_{\mathcal{L}_v}G_V=\M^\textup{ss}_{B_3,J',\theta_v}(d^v)$:
\begin{equation}\label{Eq: DLP's isom MP2-MB3}
\M^\textup{ss}_{\mathbb{P}^2}(v)\simeq\M^\textup{ss}_{B_3,J',\theta_v}(d^v)\,.
\end{equation}
This also proves the existence of $\M^\textup{ss}_{\mathbb{P}^2}(v)$ as a projective scheme independently from the general theory of Gieseker and Simpson.\footnote{In fact, another linearization of the action $G_V\curvearrowright Y$ providing the interpretation of $Y^\textup{ss}$ as a GIT-semistable locus was found in \cite{LePo94AProp} without referring to quiver moduli. Remarkably, already in \cite{Hulek80OnThe} it was observed that the Kronecker modules $f\in\Hom_\mathbb{C}(\mathbb{C}^k\otimes Z^\vee,\mathbb{C}^k)$ producing rank 2, degree 0 stable bundles can be characterized as GIT-stable points.} More recently, an analogous construction was carried out by Kuleshov in \cite{Kule97OnMod}, where, for certain choices of the numerical invariants, moduli spaces of sheaves on $\mathbb{P}^1\times\mathbb{P}^1$, Gieseker-stable with respect to the anticanonical polarization, were constructed as moduli of stable representations of the quivers
\begin{center}
{\footnotesize\begin{tikzcd}
&&\bullet&\\
&\bullet\arrow[shift left]{ur}\arrow[shift right,swap]{ur}\arrow[shift left]{dr}\arrow[shift right,swap]{dr} &&\\
&&\bullet&
\end{tikzcd}} and
{\footnotesize\begin{tikzcd}
&&\bullet\arrow[shift left]{dr}\arrow[shift right,swap]{dr} &\\
&&&\bullet\\
&&\bullet\arrow[shift left]{ur}\arrow[shift right,swap]{ur}&
\end{tikzcd}}.
\end{center}
In what follows we will see that this is a special case of a general construction working for all moduli of semistable torsion-free sheaves on $\mathbb{P}^1\times\mathbb{P}^1$. Finally, we mention that the techniques of \cite{DreLeP85Fibr} have been used in \cite{NevSta07Skly,FiGiIoKu16Int} to construct moduli spaces of semistable sheaves on noncommutative projective planes.

The above-mentioned work \cite{Beil78Coh} was followed by many years of research on the structure of the bounded derived category $D^b(X)$ of a projective variety $X$. In particular, a theory of \emph{exceptional collections} of objects of $D^b(X)$ was developed in the seminar \cite{Rudak90Heli} for this purpose (we give a short introduction to this subject in \S\ref{Exceptional collections}). By using this machinery, it is natural to interpret the abelian category of Kronecker complexes \eqref{Eq: Kron cplx intro} as the heart of a bounded t-structure (Def.\ \ref{Defn: t-struc}) on $D^b(\mathbb{P}^2)$ induced by an exceptional collection.

The main goal of this paper is to understand the constructions of the moduli spaces of sheaves via linear data mentioned in the previous section from this ``categorified" point of view, and to develope a machinery to produce isomorphisms like \eqref{Eq: DLP's isom MP2-MB3} in a systematic way when we are given an exceptional sequence with good properties.


Finally, we mention that there is a different way to relate moduli of sheaves and quiver moduli by using Bridgeland stability conditions \cite{Brid07Stab}: on a surface $X$ one can define a family of so-called \emph{geometric} stability conditions (these were introduced in \cite{ArcBer13Brid}), some of which are equivalent to Gieseker stability; on the other hand, a full strong exceptional sequence on $X$ induces \emph{algebraic} stability conditions, for which semistable objects are identified to semistable quiver representations. When $X=\mathbb{P}^2$, Ohkawa \cite{Ohka10Mod} constructed stability conditions which are both geometric and algebraic, obtaining as a consequence the explicit isomorphisms between moduli of sheaves and moduli of representations of the Beilinson quiver, as in our Theorems \ref{Thm: isom sh=qui on P2 (1)} and \ref{Thm: isom sh=qui on P2 (2)}. A similar analysis should in principle be possible also for $\mathbb{P}^1\times\mathbb{P}^1$, for which algebraic stability conditions were studied in \cite{ArcMil17Proj}. The main difference in our approach is essentially that we use a weaker notion of stability structure, which includes Gieseker stability both for sheaves and Kronecker complexes. Then we can directly jump from one moduli space to the other, instead of moving through the manifold of Bridgeland stability conditions. In this way the above-mentioned isomorphisms will be obtained with easy computations as examples of a general result.

\subsection{Outline of the paper}
In section \S\ref{Preliminaries} we briefly introduce the tools used in the rest of the paper: t-structures and exceptional collections on triangulated categories, stability structures, moduli spaces of sheaves and quiver representations.

Our aim is to construct some moduli spaces (or stacks) of semistable sheaves as quiver moduli spaces by using the above tools. The central idea is the following: take a smooth projective variety $X$ with a full exceptional sequence $\mathfrak{E}$ on $D^b(X)$ whose left dual ${}^\vee\!\mathfrak{E}$ is strong, and let $\M^\textup{ss}_{X,A}(v)$ be the moduli space of coherent sheaves on $X$ in a numerical class $v\in K_{\num}(X)$ that are Gieseker-semistable with respect to an ample divisor $A\subset X$. The sequence ${}^\vee\!\mathfrak{E}$ induces a triangulated equivalence $\Psi$ between $D^b(X)$ and the bounded derived category $D^b(Q;J)$ of finite-dimensional representations of a certain quiver $Q$, usually with relations $J$. The functor $\Psi$ induces a non-standard bounded t-structure on $D^b(X)$, whose heart $\mathcal{K}$ consists of certain \emph{Kronecker complexes} of sheaves, and Gieseker stability makes sense in a generalized way for objects of $\mathcal{K}$, after being reformulated in \S\ref{Moduli of sheaves} in terms of an alternating form on $K_{\num}(X)$.

The key observation is that in some cases the hearts $\mathcal{C},\mathcal{K}$ are somehow compatible with Gieseker stability, in the following sense: imposing Gieseker semistability forces the objects of suitable classes $v$ in the standard heart $\mathcal{C}\subset D^b(X)$ to be also semistable objects of $\mathcal{K}$, and the same is true with $\mathcal{C}$ and $\mathcal{K}$ exchanged. Moreover, semistable Kronecker complexes in the class $v$ are identified through $\Psi$ with $\theta_{\textup{G},v}$-semistable $d^v$-dimensional representations of $(Q,J)$, for some dimension vector $d^v$ and some (polynomial) weight $\theta_{\textup{G},v}$ (also depending on the polarization $A$) determined by the isomorphism of Grothendieck groups induced by $\Psi$. As this identification is compatible with the notions of families of semistable sheaves and semistable quiver representations, it implies that their moduli stacks can be identified through $\Psi$, and thus in particular the coarse moduli spaces $\M^\textup{ss}_{X,A}(v)$ and $\M^\textup{ss}_{Q,J,\theta_{\textup{G},v}}(d^v)$ are isomorphic.

The simplest example of this phenomenon is discussed in \S\ref{Semshequirep P1} for sheaves on the projective line $\mathbb{P}^1$: in this case, the heart $\mathcal{K}$ can be also obtained by tilting $\mathcal{C}$ using the slope-stability condition; this description is used to give in Corollary \ref{Cor: Bir-Gro Thm} an easy proof of Birkhoff-Grothendieck theorem (the well-known classification of coherent sheaves on $\mathbb{P}^1$) via quiver representations.

When $X$ is a surface, however, this simple argument fails as the hearts $\mathcal{C},\mathcal{K}$ are no longer related by a tilt. Nevertheless, in \S\ref{Gies-quiv stab surf} we show that the above-mentioned compatibility between $\mathcal{C},\mathcal{K}$ and Gieseker stability holds under some hypotheses on the sequence $\mathfrak{E}$ (namely, when $\mathfrak{E}$ is \emph{monad-friendly}, Def.\ \ref{Defn: mod-fr exc coll}): we define a subset $\tilde{\mathpzc{R}}_{A,\mathfrak{E}}\subset K_{\num}(X)$ depending on the ample divisor $A$ and the sequence $\mathfrak{E}$, and we prove in Corollary \ref{Cor: isom mod st/sp} that:

\begin{theorem}\label{Thm: Msh=Mqr (intro)}
For all $v\in\tilde{\mathpzc{R}}_{A,\mathfrak{E}}$ we have isomorphisms $\M^\textup{ss}_{X,A}(v)\simeq\M^\textup{ss}_{Q,J,\theta_{\G,v}}(d^v)$ and $\M^\textup{st}_{X,A}(v)\simeq\M^\textup{st}_{Q,J,\theta_{\G,v}}(d^v)$.
\end{theorem}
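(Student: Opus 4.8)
The plan is to transport every piece of data through the equivalence $\Psi\colon D^b(X)\xrightarrow{\sim}D^b(Q;J)$ and to reduce the isomorphism of moduli to an identification of the semistable objects in the two hearts $\mathcal{C}$ (coherent sheaves) and $\mathcal{K}$ (Kronecker complexes) sitting inside $D^b(X)$. First I would use the isomorphism of Grothendieck groups induced by $\Psi$ to send the numerical class $v\in K_{\num}(X)$ to the dimension vector $d^v$, and to rewrite the alternating form on $K_{\num}(X)$ encoding Gieseker stability (as reformulated in \S\ref{Moduli of sheaves}) as King's (polynomial) weight $\theta_{\G,v}$ on the representations of $(Q,J)$. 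At the purely numerical level this matching is then tautological: for objects of a fixed class, the Gieseker stability function and the $\theta_{\G,v}$-stability function are literally the same function on $K_{\num}(X)\cong\mathbb{Z}^{Q_0}$, so a subobject destabilizes on one side exactly when the corresponding class destabilizes on the other.

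The real content, and the step I expect to be the main obstacle, is to show that for $v\in\tilde{\mathpzc{R}}_{A,\mathfrak{E}}$ the hearts $\mathcal{C}$ and $\mathcal{K}$ agree on their semistable objects of class $v$. Concretely, given a Gieseker-semistable sheaf $E\in\mathcal{C}$ of class $v$, I would show that the complex $\Psi(E)$ has cohomology concentrated in a single degree, so that $E$ already lies in $\mathcal{K}$ and is an honest representation of $(Q,J)$. This is the ``monad'' condition: the off-degree cohomology objects are computed, through the dual exceptional collection ${}^\vee\!\mathfrak{E}$, by $\Hom$ and $\Ext$ groups between the exceptional objects and $E$, and one must force these to vanish. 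This is exactly where the hypothesis that $\mathfrak{E}$ is monad-friendly (Def.\ \ref{Defn: mod-fr exc coll}) and the defining inequalities of the region $\tilde{\mathpzc{R}}_{A,\mathfrak{E}}$ enter: semistability of $E$ gives Hom-vanishing between objects of incompatible slopes, and the region is chosen so that the numerically possible destabilizing pieces all have slopes ruling out the forbidden cohomology. The reverse implication, that a $\theta_{\G,v}$-semistable representation $M$ seen through $\Psi^{-1}$ lands in the standard heart $\mathcal{C}$ as a Gieseker-semistable sheaf, is proved symmetrically, using monad-friendliness to kill the cohomology of $\Psi^{-1}(M)$ away from degree $0$.

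Once the coincidence of hearts is established, the comparison of stabilities follows by transferring destabilizers. If an object lying in both hearts were Gieseker-semistable but not $\theta_{\G,v}$-semistable, a destabilizing $\mathcal{K}$-subobject would have a class $w$ violating the numerical inequality; passing to the cohomology sheaves of that subobject and using that the ambient object is an honest sheaf, the constraints imposed by $\tilde{\mathpzc{R}}_{A,\mathfrak{E}}$ on $w$ let me extract a Gieseker-destabilizing subsheaf, a contradiction, and symmetrically in the other direction. This yields a bijection between Gieseker-semistable sheaves of class $v$ and $\theta_{\G,v}$-semistable $d^v$-dimensional representations of $(Q,J)$, restricting to the stable objects on both sides. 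Finally I would upgrade this bijection to an isomorphism of moduli: since $\Psi$ is built from a fixed exceptional collection, it and its quasi-inverse carry flat families of semistable sheaves to flat families of semistable representations relative to an arbitrary base, hence induce an equivalence of the moduli stacks and therefore an isomorphism of the coarse moduli spaces $\M^\textup{ss}_{X,A}(v)\simeq\M^\textup{ss}_{Q,J,\theta_{\G,v}}(d^v)$, which restricts to $\M^\textup{st}_{X,A}(v)\simeq\M^\textup{st}_{Q,J,\theta_{\G,v}}(d^v)$ on the stable loci. This is carried out in detail in \S\ref{Gies-quiv stab surf}, culminating in Cor.\ \ref{Cor: isom mod st/sp}.
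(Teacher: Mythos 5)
Your outline follows the paper's route: transport the class and the stability form through $\Psi$, prove the two hearts $\mathcal{C}$ and $\mathcal{K}$ are $(\sigma_{\G},v)$-compatible for $v$ in the distinguished region (conditions (C1) and (C2) of \S\ref{Compat hearts stab}), and then upgrade the resulting identification of semistable objects to an identification of moduli stacks and hence of coarse moduli spaces. The sheaf-to-monad direction, the transfer of destabilizers via long exact sequences of cohomology, and the family/stack upgrade are all as in \S\ref{Gies-quiv stab surf}.

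There is, however, one substantive gap: your claim that the reverse implication --- a $\theta_{\G,v}$-semistable representation corresponds under $\Psi^{-1}$ to a complex with cohomology concentrated in degree $0$ --- ``is proved symmetrically'' is not accurate, and a literally symmetric argument would not go through. For a sheaf $\mathcal{F}$ the off-degree $\mathcal{K}$-cohomologies are computed by $\Hom({}^\vee\!E_i,\mathcal{F})$ and $\Ext^2({}^\vee\!E_i,\mathcal{F})$, so semistability of the ${}^\vee\!E_i$, Serre duality and the inequalities defining $\mathpzc{R}^{\G}_A$ kill them directly; but for a Kronecker complex $K_V$ there is no analogous collection of semistable objects whose Hom-spaces compute $H^{\pm1}_{\mathcal{C}}(K_V)$. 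The paper's actual mechanism (Prop.\ \ref{Prop: morphs to Vx} together with Lemmas \ref{Lem: max KW has H1=0} and \ref{Lem: min KU has H-1=0}) is different: failure of the monad condition at a point $x\in X$ produces a nonzero morphism to or from the point complex $K_x\simeq\mathcal{O}_x[-1]$, and the second half $\mathpzc{S}_A$ of the region $\tilde{\mathpzc{R}}_{A,\mathfrak{E}}=\mathpzc{R}^{\G}_A\cap\mathpzc{S}_A$ --- which your sketch does not account for, since you describe the region only in terms of slopes of the exceptional bundles --- is defined precisely by numerical conditions on the subobjects and quotients of $K_x$ guaranteeing that such a morphism yields a $\sigma_{\G}$-destabilizing subobject, or a quotient contradicting $\sigma_{\M}$-maximality. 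The quotient case moreover needs an iteration (the chain of surjections $K_U^{(0)}\to K_U^{(1)}\to\cdots\to K_U^{(\ell)}$ in the proof of Lemma \ref{Lem: min KU has H-1=0}), since one application of the argument only controls the kernel of a single step. Without this idea the step ``$\sigma_{\G}$-semistable Kronecker complex $\Rightarrow$ monad'', and hence condition (C1)(b), remains unproved.
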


The assumptions on $\mathfrak{E}$ are easily seen to be satisfied by some well-known exceptional sequences on the projective plane $\mathbb{P}^2$ and the smooth quadric $\mathbb{P}^1\times\mathbb{P}^1$. The application of the Theorem to them is treated in Sections \ref{Appl to P2} and \ref{Appl to P1xP1}, where the only thing left is to determine the data $d^v,\theta_{\G,v}$, and $\tilde{\mathpzc{R}}_{A,\mathfrak{E}}$ for the given exceptional sequences. In both cases, the regions $\tilde{\mathpzc{R}}_{A,\mathfrak{E}}$ that we obtain are large enough to include, up to twisting $\mathfrak{E}$ by line bundles, any class $v$ of positive rank. So, for example, if we start from the exceptional sequence $\mathfrak{E}=(\mathcal{O}(-1),\Omhol^1(1),\mathcal{O})$ on $\mathbb{P}^2$, then we deduce the isomorphism \eqref{Eq: DLP's isom MP2-MB3} as a manifestation of an equivalence between abelian categories of Gieseker-semistable sheaves with fixed reduced Hilbert polynomial and King-semistable quiver representations. On $X=\mathbb{P}^1\times\mathbb{P}^1$ we will get a similar construction of $\M_{X,A}^\textup{ss}(v)$ for any polarization $A$ and any class $v$ of positive rank, providing thus a complete generalization of the result of \cite{Kule97OnMod}.  

Standard properties of the moduli spaces of sheaves, such as smoothness, dimensions and existence of universal sheaves will be quickly deduced using the theory of quiver moduli, and some concrete examples will be discussed.


%
%
\section{Preliminaries}\label{Preliminaries}

In this section we give short accounts of the notions used throughout the paper, mostly in order to fix notation and conventions. The material is almost all standard, except for some concepts and notation in \S\ref{Stability structures}, \S\ref{Compat hearts stab} and \S\ref{Moduli of sheaves}, where we reformulate Gieseker stability in a way that makes sense for both sheaves and Kronecker complexes.

\subsection{t-structures}\label{t-structures}
Let $\mathcal{D}$ be a triangulated category. In this paragraph we recall the concept of t-structure on $\mathcal{D}$. All the details can be found in \cite{BeBeDe82Fais}.

\begin{definition}\label{Defn: t-struc}
A \emph{t-structure} on $\mathcal{D}$ consists of a pair $(\mathcal{D}^{\leq0},\mathcal{D}^{\geq0})$ of strictly full subcategories of $\mathcal{D}$ such that, writing $\mathcal{D}^{\leq\ell}:=\mathcal{D}^{\leq0}[-\ell]$ and $\mathcal{D}^{\geq\ell}:=\mathcal{D}^{\geq0}[-\ell]$ for $\ell\in\mathbb{Z}$, we have:
	\begin{enumerate}
	\item $\Hom_\mathcal{D}(X,Y)=0\ \forall X\in\mathcal{D}^{\leq0},\forall Y\in\mathcal{D}^{\geq1}$;
	\item $\mathcal{D}^{\leq0}\subset\mathcal{D}^{\leq0}[-1]$ and $\mathcal{D}^{\geq0}\subset\mathcal{D}^{\geq0}[1]$;
	\item for all $E\in\mathcal{D}$ there is a distinguished triangle $X\to E\to Y\to X[1]$ for some $X\in\mathcal{D}^{\leq0}$ and $Y\in\mathcal{D}^{\geq1}$.
	\end{enumerate}
The intersection $\mathcal{A}:=\mathcal{D}^{\leq0}\cap\mathcal{D}^{\geq0}$ is called the \emph{heart} of the t-structure. Finally, the t-structure $(\mathcal{D}^{\leq0},\mathcal{D}^{\geq0})$ is said to be \emph{bounded} when for all $E\in\mathcal{D}$ there exists $\ell\in\mathbb{N}$ such that $E\in\mathcal{D}^{\leq\ell}\cap\mathcal{D}^{\geq-\ell}$.
\end{definition}

It turns out that:
	\begin{enumerate}
	\item the heart $\mathcal{A}$ is an extension-closed abelian category, and when the t-structure is bounded the inclusion $\mathcal{A}\hookrightarrow\mathcal{D}$ gives an isomorphism $K_0(\mathcal{A})\cong K_0(\mathcal{D})$ between the Grothendieck groups;
	\item a sequence $0\to A_1\to A_2\to A_3\to 0$ in $\mathcal{A}$ is exact if and only if it can be completed to a distinguished triangle in $\mathcal{D}$;
	\item the inclusions $\mathcal{D}^{\leq\ell}\hookrightarrow \mathcal{D},\mathcal{D}^{\geq\ell}\hookrightarrow \mathcal{D}$ have a right adjoint $\tau_{\leq\ell}$ and a left adjoint $\tau_{\geq\ell}$ respectively, and the functors
	\begin{equation*}
	H_\mathcal{A}^\ell:=\tau_{\geq0}\circ\tau_{\leq0}[\ell]:\mathcal{D}\longrightarrow\mathcal{A}
	\end{equation*}	
	are cohomological.
	\end{enumerate}

\begin{examples}
The t-structures that we will see in this paper will arise in three ways:
	\begin{enumerate}
	\item if $\mathcal{A}$ is an abelian category, then the bounded derived category $D^b(\mathcal{A})$ has a \emph{standard} bounded t-structure whose heart is $\mathcal{A}$;
	\item if $\Psi:\mathcal{D}_1\to\mathcal{D}_2$ is an equivalence of triangulated categories, any t-structure on $\mathcal{D}_1$ induces a t-structure on $\mathcal{D}_2$ in the obvious way; in particular, when we are dealing with derived categories, the standard t-structures may be mapped to non-standard ones;
	\item (see e.g.\ \cite[\S1.1]{Poli07Const}) if $(\mathcal{T},\mathcal{F})$ is a torsion pair in the heart $\mathcal{A}\subset\mathcal{D}$ of a bounded t-structure $(\mathcal{D}^{\leq0},\mathcal{D}^{\geq0})$, then we can define a new t-structure $(\mathcal{D}^{\leq0}_t,\mathcal{D}^{\geq0}_t)$ on $\mathcal{D}$ via a \emph{tilt}, i.e.\ by taking
\begin{equation*}\label{Eq: def of tilt t-str}
\begin{array}{c}
\Ob(\mathcal{D}^{\leq0}_t):=\{X\in\mathcal{D}\ |\ H^0_\mathcal{A}(X)\in\mathcal{T},\ H_\mathcal{A}^\ell(X)=0\ \forall\ell>0\}\,,\\
\Ob(\mathcal{D}^{\geq0}_t):=\{X\in\mathcal{D}\ |\ H_\mathcal{A}^{-1}(X)\in\mathcal{F},\ H_\mathcal{A}^\ell(X)=0\ \forall\ell<-1\}\,;
\end{array}
\end{equation*}
	moreover, we have that
	\begin{equation}\label{Eq: rel tilt t-str}
	\mathcal{D}^{\leq0}_t\subset\mathcal{D}^{\leq0}\subset\mathcal{D}^{\leq1}_t\,,
	\end{equation}
	and in fact this property characterizes all the t-structures which are obtained by tilting $(\mathcal{D}^{\leq0},\mathcal{D}^{\geq0})$.
\end{enumerate}\end{examples}

\subsection{Stability structures}\label{Stability structures}
Let $\mathcal{A}$ be an abelian category. We will discuss some notions of (semi)stability for the objects of $\mathcal{A}$. These are mostly standard techniques, although in some cases we introduce some new notation and definitions for later convenience.

\subsubsection{Weights and alternating forms}\label{Weigh alt forms}
The simplest notion we will use is that of stability with respect to a \emph{weight}, that is a $\mathbb{Z}$-linear map $\nu:K_0(\mathcal{A})\to R$ with values in an ordered abelian group $(R,\leq)$ (which will typically be $\mathbb{Z}$, $\mathbb{R}$ or the polynomial ring $\mathbb{R}[t]$ with lexicographical order). This was introduced in \cite{King94Mod}.

\begin{definition}\label{Defn: stab wrt weight}
A nonzero object $A$ in $\mathcal{A}$ is said to be \emph{$\nu$-(semi)stable} if $\nu(A)=0$ and any strict subobject $0\neq B\subsetneq A$ satisfies $\nu(B)\ggeq0$. $A$ is \emph{$\nu$-polystable} if it is a direct sum of $\nu$-stable objects.
\end{definition}

It is easily checked that the semistable subobjects form (adding the zero object) a full abelian subcategory $\mathcal{S}_\nu\subset\mathcal{A}$ closed under extensions. The (semi)simple objects of this category are the $\nu$-(poly)stable objects.

Second, we take an alternating $\mathbb{Z}$-bilinear form $\sigma:K_0(\mathcal{A})\times K_0(\mathcal{A})\to R$.

\begin{definition}\label{Defn: stab alt form}
A nonzero object $A$ in $\mathcal{A}$ is said to be \emph{$\sigma$-(semi)stable} if any strict subobject $0\neq B\subsetneq A$ satisfies $\sigma(B,A)\lleq0$.
\end{definition}

If we fix a class $v\in K_0(\mathcal{A})$, then we can define a weight $\nu_v:=\sigma(v,\cdot)$ and observe that an object $A$ of class $v$ is $\nu_v$-(semi)stable if and only if it is $\sigma$-(semi)stable. 

We have given these two basic definitions of stability mostly for later notational convenience, and because they will be useful when used on different hearts in a triangulated category (see \S\ref{Stab triang cat}). These definitions are very general and do not have particularly interesting properties, mainly because they are too weak to induce an order on the objects of $\mathcal{A}$. However, with $\sigma$ we can order the subobjects of a fixed object $A$, and we will use the following definition:

\begin{definition}\label{Defn: sigma-max sub}
Let $A$ be a nonzero object. A nonzero subobject $S\subset A$ is said to be \emph{$\sigma$-maximal} if for any subobject $S'\subset A$ we have $\sigma(S',A)\leq\sigma(S,A)$.
\end{definition}

\subsubsection{Polynomial stabilities}
Take a $\mathbb{Z}$-linear map $P:K_0(\mathcal{A})\to\mathbb{R}[t]$ and define an alternating form
$\sigma_P:K_0(\mathcal{A})\times K_0(\mathcal{A})\to\mathbb{R}$ by
\begin{equation}\label{Eq: sigmaP}
\sigma_P(v,w):=P_vP'_w-P_wP'_v\,,
\end{equation}
where $P_v'(t):=\frac{\mathrm{d}}{\mathrm{d}t}P_v(t)$.

\begin{definition}\label{Defn: P-(semi)st}
A nonzero object $A$ in $\mathcal{A}$ is said to be \emph{$P$-(semi)stable} if it is $\sigma_P$-(semi)stable, that is if $P_vP'_w-P_wP'_v\leq0$ for any $0\neq B\subsetneq A$.
\end{definition}

As usual, polynomials are ordered lexicographically. This definition does not assume anything on the map $P$, but it turns out to be much more interesting when $P$ maps the classes of nonzero objects into the set $\mathbb{R}[t]_+\subset\mathbb{R}[t]$ of polynomials with positive leading coefficient:

\begin{definition}
We call $P$ a \emph{polynomial stability} on $\mathcal{A}$ if $P_A\in\mathbb{R}[t]_+$ for any nonzero object $A$ of $\mathcal{A}$.
\end{definition}

Indeed, we can give $\mathbb{R}[t]_+$ an alternative total preorder $\prqG$ by setting
\begin{equation}\label{Eq: def Gies preor}
p\prqG q \iff pq'-p'q\leq 0
\end{equation}
for $p,q\in\mathbb{R}[t]_+$; we also write $p\equiv_\textup{G}q$ when $p\prqG q$ and $q\prqG p$. We have the following equivalent characterizations of $\preceq_\textup{G}$, which show that it is indeed a preorder (that is, a total, reflexive and transitive relation) and that it coincides with the preorder considered in \cite[\S2]{Ruda97Stab}:

\begin{lemma}\label{Lem: equiv char Gies ord}
Take two polynomials $p,q\in\mathbb{R}[t]_+$ and write them as $p(t)=\sum_{i=0}^{\deg p}a_it^i$ and $q(t)=\sum_{j=0}^{\deg q}b_jt^j$. Then the following statements are equivalent:
	\begin{itemize}
	\item[(i)] $p\pprG q$;
	\item[(ii)] we have
	\begin{equation*}\label{Eq: Gies ord equiv cond}
	\deg p>\deg q\textup{\ \ \ or\ \ \ }\left\{\begin{matrix}
\deg p=\deg q=:d\\ 
\frac{p(t)}{a_d}\lleq\frac{q(t)}{b_d}
\end{matrix}\right.\,.
	\end{equation*}
	\end{itemize}
If $\deg p\leq\deg q$, then they are also equivalent to
	\begin{itemize}
	\item[(iii)] $b_{\deg p}\, p(t)\lleq a_{\deg p}\,q(t)$.
	\end{itemize}
Moreover, we have $p\equiv_\textup{G}q$ if and only if $p$ and $q$ are proportional.
\end{lemma}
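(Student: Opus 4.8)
The plan is to reduce everything to the behavior of the \emph{Wronskian} $W(t):=p(t)q'(t)-p'(t)q(t)$, whose sign in the lexicographic order simultaneously encodes $p\prqG q$ and $p\prG q$. The one principle I would use throughout is that for a nonzero real polynomial $r$, the statement $r\le 0$ (resp.\ $r<0$) in the lexicographic order is equivalent to $r(t)\le 0$ (resp.\ $r(t)<0$) for all $t\gg 0$: the lexicographic order is precisely the ordering of polynomials by their growth at $+\infty$, since the sign of $r$ there equals the sign of its leading coefficient. Hence $p\prqG q$ iff $W(t)\le 0$ for $t\gg 0$ and $p\prG q$ iff $W(t)<0$ for $t\gg 0$, so the parenthetical strict/non-strict conventions built into $\pprG$ and $\lleq$ can be carried along in parallel at every step.

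First I would dispose of the different-degree cases by a leading-term computation. Writing $m=\deg p$, $n=\deg q$ with positive leading coefficients $a_m,b_n$, one finds that the top term of $W$ is $(n-m)\,a_m b_n\,t^{m+n-1}$. Thus if $m>n$ the leading coefficient of $W$ is negative, so $W<0$ and $p\prG q$ — exactly the $\deg p>\deg q$ branch of~(ii), which indeed produces the strict relation; if $m<n$ it is positive, so $W>0$ and $p\nprqG q$, consistently with $\deg p<\deg q$ being excluded from~(ii).

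The heart of the matter is the equal-degree case $m=n=:d$, where the top terms of $W$ cancel. Here I would pass to the rational function $f:=p/q$, which satisfies the identity $W=-q^2 f'$. Since $q(t)>0$ for $t\gg 0$, the sign of $W$ near $+\infty$ is opposite to that of $f'$; and as $f$ is rational, $f'$ has constant sign near $+\infty$, so $f$ is eventually monotone with limit $L:=a_d/b_d>0$. Therefore $p\prqG q$ (i.e.\ $f$ eventually non-decreasing) holds iff $f$ approaches $L$ from below, i.e.\ $f\le L$ for $t\gg 0$; clearing the positive denominator this reads $b_d\,p(t)\le a_d\,q(t)$ for $t\gg 0$, equivalently the lexicographic inequality $\frac{p}{a_d}\lleq\frac{q}{b_d}$ after dividing by $a_d b_d>0$. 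This gives the equal-degree clause of~(ii), and the strict version $p\prG q$ corresponds to $f$ being eventually strictly increasing, hence to the strict lexicographic inequality. Statement~(iii) is then just the denominator-cleared form of this: in the equal-degree case it is literally $b_d\,p\lleq a_d\,q$, and for $\deg p\le\deg q$ in general the sign of the relevant comparison is read off from its top-degree coefficient exactly as in the different-degree computation above.

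Finally, for the last assertion I would note that $p\equiv_\textup{G}q$ means $W\le 0$ and $W\ge 0$ in the (total) lexicographic order, hence $W\equiv 0$; being the Wronskian of $p$ and $q$, its identical vanishing forces proportionality, since on the cofinite locus $\{q\ne 0\}$ we have $f'=-W/q^2=0$, so $f=p/q$ is constant near $+\infty$ and $p=c\,q$ as polynomials with $c=L>0$, the converse being immediate. I expect the only genuine obstacle to be this equal-degree case — specifically, making rigorous that the first differing coefficient of the normalized polynomials controls the sign of $W$ when several top coefficients coincide. The formulation $W=-q^2 f'$ together with ``lexicographic order $=$ order at $+\infty$'' is exactly what removes this difficulty, replacing a bookkeeping induction on coefficients by the single fact that a rational function is eventually monotone and approaches its horizontal asymptote from one definite side.
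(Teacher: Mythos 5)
The paper states this lemma without proof, so there is nothing to compare against; judging your argument on its own terms, its core is sound. Reducing $\prqG$ and $\prG$ to the eventual sign of $W=pq'-p'q$ via the principle ``lexicographic order $=$ order at $+\infty$'', the leading-term computation $(\deg q-\deg p)\,a_{\deg p}b_{\deg q}\,t^{\deg p+\deg q-1}$ settling the unequal-degree cases, the identity $W=-q^2(p/q)'$ together with eventual monotonicity of a rational function handling the equal-degree case, and the proportionality claim via $W\equiv0\Rightarrow(p/q)'\equiv0$ are all correct and yield a complete proof of (i)$\Leftrightarrow$(ii) and of the final assertion, in both the strict and non-strict variants.

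The gap is in your one-sentence treatment of (iii). When $\deg p<\deg q$, the ``relevant comparison'' you invoke goes the wrong way: $a_{\deg p}$ is the positive leading coefficient of $p$, so $a_{\deg p}q-b_{\deg p}p$ has positive leading coefficient $a_{\deg p}b_{\deg q}$ in degree $\deg q$, hence (iii) as literally printed is \emph{true} (even strictly), while (i) is \emph{false} (you yourself showed $W>0$ here). Concretely, $p=1$, $q=t$ gives $b_0p=0\lleq t=a_0q$ but $pq'-p'q=1>0$. So carrying out the computation you gesture at would refute the stated equivalence rather than prove it. The resolution is that the subscripts in (iii) must be $\deg q$, not $\deg p$ --- this is the form actually used right after the lemma, where the comparison is against $\alpha_{\dim\mathcal{E}}$ --- and with that reading the remaining case is a one-liner: for $\deg p<\deg q$ one has $a_{\deg q}=0$, so (iii) reads $b_{\deg q}\,p\lleq 0$, which fails because $b_{\deg q}>0$ and $p\in\mathbb{R}[t]_+$, matching the failure of (i). Your write-up should record this correction and spell out this case instead of asserting that it follows ``exactly as'' the unequal-degree computation.
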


\begin{remark}\label{Rmk: pq'=p'q impl prop}
Notice that the last statement of the Lemma extends to any nonzero polynomials $p,q\in\mathbb{R}[t]$: if $pq'-p'q=0$, then $p$ and $q$ are proportional. Indeed, we can replace $p$ and $q$ by their opposites if necessary and then apply the Lemma to them.
\end{remark}
\begin{remark}\label{Rmk: simplif polyn stab}
Stability with respect to a (not necessarily positive) polynomial function $P=\sum_ia_it^i:K_0(\mathcal{A})\to\mathbb{R}[t]_{\leq d}$ taking values in polynomials of degree at most $d$ is unchanged if we replace each coefficient $a_i$ by a constant real combination $\sum_{j\geq i}M_{ij}a_i$ with $M_{ii}>0$. Indeed, take $p=\sum_ia_it^i,q=\sum_ib_it^i\in\mathbb{R}[t]_{\leq d}$ and an upper triangular matrix $(M_{ij})_{i,j=0}^d$ with positive diagonal entries, and define $\tilde{p}=\sum_i\tilde{a}_it^i,\tilde{q}=\sum_i\tilde{b}_it^i\in\mathbb{R}[t]_{\leq d}$ by $\tilde{a}_i=\sum_{j\geq i}M_{ij}a_j$ and $\tilde{b}_i=\sum_{j\geq i}M_{ij}b_j$. Then $pq'-p'q\leq0$ if and only if $\tilde{p}\tilde{q}'-\tilde{p}'\tilde{q}\leq0$. This is easily seen by using the previous lemma, after reducing to the case in which $p>0$ and $q>0$.
\end{remark}

The fact that a polynomial stability $P$ orders the nonzero objects of $\mathcal{A}$ has some interesting consequences \cite{Ruda97Stab}: first, when $A,B$ are $P$-semistable objects with $P_A\suG P_B$, then $\Hom_\mathcal{A}(A,B)=0$; second, the $P$-semistable objects $A$ such that $P_A$ is proportional to a fixed $p\in\mathbb{R}[t]_+$ form, after adding the zero objects, a full abelian subcategory
\begin{equation*}
\mathcal{S}_P(p)\subset\mathcal{A}\,,
\end{equation*}
closed under extensions. Finally, $P$ can be used in many cases to induce canonical filtrations of objects of $\mathcal{A}$ by semistable ones:

\begin{definition}\label{Defn: HN filtr P}
Let $P:K_0(\mathcal{A})\to\mathbb{R}[t]$ be a polynomial stability, and take a nonzero object $A$ in $\mathcal{A}$. We call \emph{Harder-Narasimhan (HN) filtration} of $A$ a filtration
\begin{equation*}
0=A_0\subsetneq A_1\subsetneq\cdots\subsetneq A_\ell=A
\end{equation*}
such that every quotient $A_i/A_{i-1}$ is $P$-semistable and $P_{A_1}\suG P_{A_2/A_1}\suG\cdots\suG P_{A/A_{\ell-1}}$. $P$ is said to have the \emph{HN property} when each nonzero object admits a HN filtration.
\end{definition}

HN filtrations, when they exist, are unique. \cite{Ruda97Stab} also gives sufficient conditions guaranteeing that $P$ has the HN property: for example, this is the case if $\mathcal{A}$ is Noetherian and $P$ only takes values in numerical polynomials. Moreover, in this case the categories $\mathcal{S}_P(p)$ are of finite length.

\subsubsection{Central charges}
Now consider a $\mathbb{Z}$-linear map $Z:K_0(\mathcal{A})\to\mathbb{C}$, and construct from it a bilinear form $\sigma_Z:K_0(\mathcal{A})\times K_0(\mathcal{A})\to\mathbb{R}$ by
\begin{equation}\label{Eq: sigmaZ}
\sigma_Z(v,w):=-\Re Z(v)\Im Z(w)+\Re Z(w)\Im Z(v)\,.
\end{equation}

\begin{definition}
A nonzero object $A$ in $\mathcal{A}$ is said to be \emph{$Z$-(semi)stable} if it is $\sigma_Z$-(semi)stable.
\end{definition}

Equivalently, we are asking that $A$ is (semi)stable with respect to the polynomial map $P_v(t):=t\Im Z(v)-\Re Z(v)$. Again, this notion of stability is most useful when the positive cone in $K_0(\mathcal{A})$ is mapped to a proper subcone of $\mathbb{C}$, as this allows to order the objects of $\mathcal{A}$ according to the phases of their images under $Z$. Commonly, one requires that the positive cone is mapped by $Z$ inside the semi-closed upper half-plane $\mathbb{H}\cup\mathbb{R}_{<0}$, which is as saying that $P$ is a polynomial stability:

\begin{definition}\cite{Brid07Stab}
$Z$ is called a \emph{stability function}, or \emph{central charge}, when for any nonzero object $A$ we have $\Im Z(A)\geq 0$, and we have $\Im Z(A)=0$ only if $\Re Z(A)<0$. $Z$ has the \emph{HN property} if the polynomial stability $P$ has.
\end{definition}

In this case we denote by $\phi_Z(A):=\arg Z(A)/\pi\in(0,1]$ the \emph{phase} of a nonzero object $A$. Note that now we have $P_A\prqG P_B$ if and only if $\phi_Z(A)\leq\phi_Z(B)$, and similarly if we replace phases by \emph{slopes} $\mu_Z(A):=-\cot\phi_Z(A)=-\Re Z(A)/\Im Z(A)\in(-\infty,+\infty]$.\\
Thus, objects are ordered by their slopes, and the above definitions of stability and HN filtrations take now the usual forms. We write $\mathcal{S}_Z(\phi)\subset\mathcal{A}$ for the abelian subcategory of $Z$-semistable objects of fixed phase $\phi$.


%
\subsubsection{Stability in triangulated categories}\label{Stab triang cat}
Finally, we extend the previous notions of stability to a triangulated category $\mathcal{D}$: by a stability structure (of any of the above types) on $\mathcal{D}$ we mean a stability structure on the heart $\mathcal{A}\subset\mathcal{D}$ of a bounded t-structure. Notice that fixing e.g.\ a bilinear form $\sigma:K_0(\mathcal{D})\times K_0(\mathcal{D})\to R$ gives a stability structure on any heart in $\mathcal{D}$; when an object $D\in\mathcal{D}$ lies in different hearts, it is necessary to specify with respect to which of them we are considering it being (semi)stable or not (being a subobject is a notion that depends on the heart).

A situation in which a stability structure behaves well when changing the heart is when a t-structure is built via a tilt with respect to a stability function:\footnote{The same argument works using a polynomial stability instead of $Z$, but we will not need this level of generality.} let $Z:K_0(\mathcal{A})\to\mathbb{C}$ be a stability function with the HN property on a heart $\mathcal{A}$, and take $\phi\in(0,1]$. Then, as in \cite[Lemma 6.1]{Brid08Stab}, $Z$ induces a torsion pair $(\mathcal{T}^Z_{\geq\phi},\mathcal{F}^Z_{<\phi})$ in $\mathcal{A}$, given by
\begin{equation}\label{Eqn: tors pair ind by Z}
\begin{array}{c}
\Ob(\mathcal{T}^Z_{\geq\phi})=\{A\in\mathcal{A}\ \textup{with all the HN phases}\geq\phi\}\,,\\
\Ob(\mathcal{F}^Z_{<\phi})=\{A\in\mathcal{A}\ \textup{with all the HN phases}<\phi\}\,,
\end{array}
\end{equation}
where by \emph{HN phases} we mean the phases $\phi_Z(A_i/A_{i-1})$ of the quotients in the HN filtration of $A$. Thus we can consider $Z$-stability with respect to either $\mathcal{A}$ or the heart $\mathcal{A}^\#$ of the tilted t-structure (although $Z$ does not map the positive cone of $K_0(\mathcal{A}^\#)$ in the upper half plane, so typically one rotates $Z$ accordingly; we do not perform this operation), and for objects in the intersection $\mathcal{A}\cap\mathcal{A}^\#=\mathcal{T}^Z_{\geq\phi}$ the two notions coincide.

\subsection{Compatibility of hearts under a stability structure}\label{Compat hearts stab}
Take a triangulated category $\mathcal{D}$, an alternating $\mathbb{Z}$-bilinear form $\sigma:K_0(\mathcal{D})\times K_0(\mathcal{D})\to\mathbb{R}[t]$, the hearts $\mathcal{A},\mathcal{B}\subset\mathcal{D}$ of two bounded t-structures, and $v\in K_0(\mathcal{D})$.

To relate $\sigma$-(semi)stable objects in the two hearts, we would like the following compatibility conditions to be satisfied. First, we want the $\sigma$-semistable objects in one heart to belong also to the other:
\begin{itemize}
\item[(C1)] For any object $D\in\mathcal{D}$ belonging to the class $v$, the following conditions hold:
	\begin{itemize}
	\item[(a)] if $D$ is a $\sigma$-semistable object of $\mathcal{A}$, then it also belongs to $\mathcal{B}$;
	\item[(b)] if $D$ is a $\sigma$-semistable object of $\mathcal{B}$, then it also belongs to $\mathcal{A}$.
	\end{itemize}
\end{itemize}
Second, we want that $\sigma$-(semi)stability can be equivalently checked in one heart or the other:
\begin{itemize}
\item[(C2)] For any object $D\in\mathcal{A}\cap\mathcal{B}$ belonging to the class $v$, we have that $D$ is $\sigma$-(semi)stable in $\mathcal{A}$ if and only if it is $\sigma$-(semi)stable in $\mathcal{B}$.
\end{itemize}

\begin{definition}\label{Defn: (sigma,v)-compat hearts}
We say that the hearts $\mathcal{A}$ and $\mathcal{B}$ are \emph{$(\sigma,v)$-compatible} when they satisfy the above conditions (C1) and (C2).
\end{definition}

\begin{remark}
Denote for now by $\mathcal{A}^\textup{st}_{\sigma,v}\subset\mathcal{A}^\textup{ss}_{\sigma,v}\subset\mathcal{A}$ the subcategories of $\sigma$-stable and $\sigma$-semistable objects in $\mathcal{A}$ of class $v$, and similarly with $\mathcal{A}$ replaced by $\mathcal{B}$. Then $\mathcal{A}$ and $\mathcal{B}$ are $(\sigma,v)$-compatible if and only if
\begin{equation*}
\mathcal{A}^\textup{ss}_{\sigma,v}=\mathcal{B}^\textup{ss}_{\sigma,v}\quad\textup{and}\quad
\mathcal{A}^\textup{st}_{\sigma,v}=\mathcal{B}^\textup{st}_{\sigma,v}\,.
\end{equation*}
In particular, notice that $(\sigma,v)$-compatibility is an equivalence relation between hearts of bounded t-structures in $\mathcal{D}$.
\end{remark}

Consider for example the case of the alternating form $\sigma_Z$ induced by a map $Z:K_0(\mathcal{D})\to\mathbb{C}$ as in Eq.\ \eqref{Eq: sigmaZ}:

\begin{lemma}\label{Lem: tilted hearts compatible}
Suppose that $Z$ is a central charge polynomial stability with the HN property on the heart $\mathcal{A}$, and let $\mathcal{A}^\#$ be the tilted heart at the torsion pair \eqref{Eqn: tors pair ind by Z} for some some $\phi\in(0,1]$. Then $\mathcal{A}$ and $\mathcal{A}_t$ are $(\sigma_P,v)$-compatible for any $v\in K_0(\mathcal{D})$ such that $\phi_Z(v)\in[\phi,1]$.
\end{lemma}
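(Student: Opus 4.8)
The plan is to reduce the whole statement to the slope description of $\sigma_P$-stability and then track subobjects across the tilt by means of the torsion pair $(\mathcal{T}^Z_{\geq\phi},\mathcal{F}^Z_{<\phi})$. First I would note that, since $P_v(t)=t\,\Im Z(v)-\Re Z(v)$ is linear in $t$, the form $\sigma_P$ is constant (i.e.\ $\mathbb{R}$-valued) and coincides with $\sigma_Z$ of Eq.~\eqref{Eq: sigmaZ}, so $(\sigma_P,v)$-compatibility is the same as $(\sigma_Z,v)$-compatibility. A direct computation with $Z(v)=|Z(v)|e^{i\pi\phi_Z(v)}$ gives
\begin{equation*}
\sigma_Z(v,w)=|Z(v)|\,|Z(w)|\,\sin\!\big(\pi(\phi_Z(v)-\phi_Z(w))\big),
\end{equation*}
so that for nonzero classes whose $Z$-image lies in $\mathbb{H}\cup\mathbb{R}_{<0}$ we have $\sigma_Z(v,w)\leq0$ iff $\phi_Z(v)\leq\phi_Z(w)$, with equality iff $\phi_Z(v)=\phi_Z(w)$. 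In particular $\sigma_Z$-(semi)stability in $\mathcal{A}$ is exactly the usual $Z$-slope (semi)stability, and for any nonzero $G\in\mathcal{A}$ with $\phi_Z(G)<\phi_Z(v)$ we get $\sigma_Z(G,v)<0$. I will use repeatedly that an object of $\mathcal{F}^Z_{<\phi}$ is an extension of HN factors of phase $<\phi$, so its class lies in the convex cone spanned by rays of argument in $(0,\pi\phi)$ and therefore has phase $<\phi\leq\phi_Z(v)$.

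For condition (C1) I recall the identity $\mathcal{A}\cap\mathcal{A}^\#=\mathcal{T}^Z_{\geq\phi}$ (here $\mathcal{A}^\#=\mathcal{A}_t$ is the tilted heart). Then (C1a) is immediate: a $\sigma_Z$-semistable $D\in\mathcal{A}$ of class $v$ is $Z$-semistable of single phase $\phi_Z(v)\geq\phi$, so all its HN phases are $\geq\phi$, i.e.\ $D\in\mathcal{T}^Z_{\geq\phi}\subset\mathcal{A}^\#$. For (C1b), write $F:=H^{-1}_{\mathcal{A}}(D)\in\mathcal{F}^Z_{<\phi}$ and $T:=H^0_{\mathcal{A}}(D)\in\mathcal{T}^Z_{\geq\phi}$, so the truncation triangle gives a short exact sequence $0\to F[1]\to D\to T\to0$ in $\mathcal{A}^\#$. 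Using $Z(F[1])=-Z(F)$ and bilinearity, $\sigma_Z(F[1],D)=-\sigma_Z(F,v)$, which by the previous paragraph is $>0$ whenever $F\neq0$ (as $\phi_Z(F)<\phi\leq\phi_Z(v)$). Thus $F[1]$ would destabilize the $\sigma_Z$-semistable object $D$ of $\mathcal{A}^\#$, forcing $F=0$, i.e.\ $D\cong T\in\mathcal{A}$.

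For condition (C2) I fix $D\in\mathcal{A}\cap\mathcal{A}^\#=\mathcal{T}^Z_{\geq\phi}$ of class $v$ and convert destabilizers both ways. If $B\subseteq D$ is a subobject in $\mathcal{A}$, let $0\to B_{\mathcal{T}}\to B\to B_{\mathcal{F}}\to0$ be its torsion decomposition; then $B_{\mathcal{T}}$ is a subobject of $D$ in $\mathcal{A}^\#$ as well, since $0\to B_{\mathcal{T}}\to D\to D/B_{\mathcal{T}}\to0$ has all terms in $\mathcal{T}^Z_{\geq\phi}\subset\mathcal{A}\cap\mathcal{A}^\#$ and so is short exact in both hearts, and additivity yields $\sigma_Z(B,v)=\sigma_Z(B_{\mathcal{T}},v)+\sigma_Z(B_{\mathcal{F}},v)\leq\sigma_Z(B_{\mathcal{T}},v)$ because $\sigma_Z(B_{\mathcal{F}},v)\leq0$. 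Hence a destabilizer in $\mathcal{A}$ produces one in $\mathcal{A}^\#$, which is $\sigma_Z$-semistability in $\mathcal{A}^\#\Rightarrow$ in $\mathcal{A}$. Conversely a subobject $B'\subseteq D$ in $\mathcal{A}^\#$ has $H^{-1}_{\mathcal{A}}(B')=0$ by the cohomology long exact sequence (since $H^{-1}_{\mathcal{A}}(D)=0$), so $B'\in\mathcal{T}^Z_{\geq\phi}\subset\mathcal{A}$, and its image $B:=\im(B'\to D)$ in $\mathcal{A}$ fits into $0\to H^{-1}_{\mathcal{A}}(D/B')\to B'\to B\to0$ with $H^{-1}_{\mathcal{A}}(D/B')\in\mathcal{F}^Z_{<\phi}$, so $\sigma_Z(B',v)=\sigma_Z(B,v)+\sigma_Z(H^{-1}_{\mathcal{A}}(D/B'),v)\leq\sigma_Z(B,v)$ and a destabilizer in $\mathcal{A}^\#$ produces one in $\mathcal{A}$. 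The stable cases follow by keeping track of when these inequalities are strict, using $\mathcal{T}^Z_{\geq\phi}\cap\mathcal{F}^Z_{<\phi}=0$ to guarantee that the passages preserve nonzero \emph{proper} subobjects.

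The computation in the first paragraph is routine and (C1a) is formal. The step I expect to be the main obstacle is the bookkeeping in (C2): one must match subobjects of $D$ in the two hearts via the torsion pair and the cohomology long exact sequence, and then control the sign of $\sigma_Z$ on the torsion-free correction terms. Everything ultimately hinges on the single inequality $\phi_Z(\mathcal{F}^Z_{<\phi})<\phi\leq\phi_Z(v)$, which forces each correction $\sigma_Z(-,v)$ coming from $\mathcal{F}^Z_{<\phi}$ to be nonpositive; carefully distinguishing the strict from the non-strict instances is what simultaneously yields the stable and the semistable versions of the compatibility.
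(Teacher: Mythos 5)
The paper never proves this lemma: it is stated immediately after the torsion pair \eqref{Eqn: tors pair ind by Z} is introduced and is simply invoked later (e.g.\ right after Proposition \ref{Prop: semist sh-Kk on P1}), so there is no proof of record to compare yours with; judged on its own, your argument is correct and is surely the intended one. The identification $\sigma_P=\sigma_Z$, the formula $\sigma_Z(v,w)=|Z(v)||Z(w)|\sin\bigl(\pi(\phi_Z(v)-\phi_Z(w))\bigr)$, and the observation that every nonzero $F\in\mathcal{F}^Z_{<\phi}$ has a well-defined phase $\phi_Z(F)<\phi\leq\phi_Z(v)$, hence $\sigma_Z(F,v)<0$, are exactly the right reductions, and transporting destabilizers through the torsion pair is the right mechanism for (C2). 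The properness and nonvanishing checks you leave implicit do all close, by precisely the sign bookkeeping you point to: in (C1b), if $T=H^0_{\mathcal{A}}(D)=0$ then $F[1]=D$, so $0=\sigma_Z(v,v)=-\sigma_Z(F,v)>0$, absurd; hence $F\neq0$ automatically makes $F[1]$ a \emph{proper} subobject and the destabilization argument is legitimate. In (C2), if $B:=\im_{\mathcal{A}}(B'\to D)$ were $0$ then $B'\cong H^{-1}_{\mathcal{A}}(D/B')\in\mathcal{T}^Z_{\geq\phi}\cap\mathcal{F}^Z_{<\phi}=0$, contradicting $B'\neq0$; and if $B=D$ then $\sigma_Z(B',v)=\sigma_Z\bigl(H^{-1}_{\mathcal{A}}(D/B'),v\bigr)\leq0$, with equality forcing $H^{-1}_{\mathcal{A}}(D/B')=0$ and hence $D/B'=0$ (its $H^0_{\mathcal{A}}$ is $\coker_{\mathcal{A}}(B'\to D)=0$), contradicting properness of $B'$; so a destabilizer in $\mathcal{A}^\#$, in the semistable or the stable sense, really does produce a nonzero proper destabilizer in $\mathcal{A}$. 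With these one-line completions written out, your proof is a complete proof of the lemma.
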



\begin{remark}\label{Rmk: mod stack comp herts coinc}
Typically (e.g.\ this is the case when $\mathcal{D}=D^b(X)$, as discussed in \S\ref{Families obj der cat}) there is some notion of families of objects in the hearts of $\mathcal{D}$, so that we have moduli stacks (or even moduli spaces) $\mathfrak{M}_{\mathcal{A},\sigma}(v)$, $\mathfrak{M}_{\mathcal{B},\sigma}(v)$ of $\sigma$-(semi)stable objects in $\mathcal{A},\mathcal{B}$ respectively, and belonging to the class $v$. Then we have $\mathfrak{M}_{\mathcal{A},\sigma}(v)=\mathfrak{M}_{\mathcal{B},\sigma}(v)$ if $\mathcal{A}$ and $\mathcal{B}$ are $(\sigma,v)$-compatible.
\end{remark}

\subsection{Quiver moduli}\label{Quiver moduli}
Here we briefly recall the main aspects of the geometric representation theory of quivers introduced in \cite{King94Mod}: this summary is essentially based on that paper and on the notes \cite{Rein08Mod}.

Our notation is as follows: a \emph{quiver} $Q=(I,\Omega)$, consists of a set $I$ of vertices, a collection $\Omega$ of arrows between them and source and target maps $s,t:\Omega\to I$. We only consider finite and acyclic (that is, without oriented loops) quivers. We denote by $\Rep^\textup{fd}_\mathbb{C}(Q)$ the abelian category of finite-dimensional complex representations of $Q$, which are identified with \emph{left} modules of finite dimension over the path algebra $\mathbb{C}Q$ (we adopt the convention in which arrows are composed like functions). When a $I$-graded $\mathbb{C}$-vector space $V=\oplus_{i\in I}V_i$ is fixed, we write
\begin{equation*}
R_V:=\oplus_{h\in\Omega}\Hom_\mathbb{C}(V_{s(h)},V_{t(h)})
\end{equation*}
for the vector space of representations of $Q$ on $V$, whose elements are collections $f=\{f_h\}_{h\in\Omega}$ of linear maps. The isomorphism classes of such representations are the orbits of the obvious action of $G_V:=\prod_{i\in I}\GL_\mathbb{C}(V_i)$ on $R_V$. The subgroup $\Delta:=\{(\lambda\Id_i)_{i\in I}\,,\ \lambda\in\mathbb{C}^\times\}$ acts trivially, so the action descends to $PG_V:=G_V/\Delta$.

Under our assumptions of finiteness and acyclicity of $Q$, the category $\Rep^\textup{fd}_\mathbb{C}(Q)$ is of finite length and hereditary, and its simple objects are the representations $S(i)$ with $\mathbb{C}$ at the $i$th vertex and zeroes elsewhere; in particular, the classes of these objects form a basis of the Grothendieck group $K_0(Q):=K_0(\Rep^\textup{fd}_\mathbb{C}(Q))$, which is then identified with the lattice $\mathbb{Z}^I$ by taking the dimension vector $\dimvec V=(\dim_\mathbb{C}V_i)_{i\in I}$ of a representation $V$. Hence, giving a $\mathbb{Z}$-valued weight as in \S\ref{Weigh alt forms} on the category $\Rep^\textup{fd}_\mathbb{C}(Q)$ is the same as giving an array $\theta\in\mathbb{Z}^I$: this defines $\nu_\theta:\mathbb{Z}^I\cong K_0(\Rep^\textup{fd}_\mathbb{C}(Q))\to\mathbb{Z}$ by
\begin{equation}\label{Eq: weight quiver theta}
\nu_\theta(d):=\theta\cdot d=\sum_{i\in I}\theta^id_i\,.
\end{equation}
Similarly, we can consider $\mathbb{R}$-valued or even polynomial-valued arrays $\theta$ and weights $\nu_\theta$.

\begin{definition}\label{Defn: theta-stab qui rep}
Fix $\theta\in\mathbb{R}^I$ or $\theta\in\mathbb{R}[t]^I$. We call a representation $(V,f)$ \emph{$\theta$-(semi)stable} when it is $\nu_\theta$-(semi)stable according to Def.\ \ref{Defn: stab wrt weight}, namely when we have $\theta\cdot\dimvec V=0$ and $\theta\cdot\dimvec W\ggeq 0$ for any subrepresentation $0\neq W\subsetneq V$.\footnote{This is the definition used in \cite{King94Mod}; the author of \cite{Rein08Mod} uses instead the opposite convention, and in fact he defines a slope $\mu_\theta$, to order the representations and to have a HN property.}\\
We denote by $R_{V,\theta}^\textup{st}\subset R_{V,\theta}^\textup{ss}\subset R_V$ the subsets of $\theta$-stable and $\theta$-semistable representations on $V$, and by
\begin{equation}\label{Eq: cat semis quiv rep}
\mathcal{S}_\theta\subset\Rep_\mathbb{C}^\textup{fd}(Q)
\end{equation}
the subcategory of $\theta$-semistable representations of $Q$ (of any dimension), including the zero representation.
\end{definition}
\begin{remark}\label{Rmk: coprime dim vec}
Suppose that a dimension vector $d\in\mathbb{N}^I$ such that $\theta\cdot d=0$ is \emph{$\theta$-coprime}, meaning that we have $\theta\cdot d'\neq0$ for any $0\neq d'<d$ (which means that $0\neq d'\neq d$ and $d'_i\leq d_i$ for all $i\in I$). Then a $d$-dimensional representation cannot be strictly $\theta$-semistable. Note also that if $d$ is $\theta$-coprime, then it is a primitive vector of $\mathbb{Z}^I$. Conversely, if $d$ is a primitive vector, $\theta\cdot d=0$ and the components $\theta^0,...,\theta^k\in\mathbb{R}^I$ of $\theta$ span a subspace of dimension at least equal to $\#I-1$, then $d$ is $\theta$-coprime.
\end{remark}

Now we fix a $I$-graded vector space $V$ of dimension vector $d$, and we want to consider a quotient space of $R_V$ by the reductive group $G_V$ to parameterize geometrically the representations of $Q$. The set-theoretical quotient often is not a variety, while the classical invariant theory quotient $R_V/\!/PG_V$ is just a point, so we consider a GIT quotient with respect to a linearization: given an integral array $\theta\in\mathbb{Z}^I$ such that $d\cdot\theta=0$, we construct a character $\chi_\theta:PG_V\to\mathbb{C}^\times$ by $\chi_\theta(g):=\prod_{i\in I}(\det g_i)^{\theta_i}$; this induces a linearization of the trivial line bundle on $R_V$ and then a notion of (semi)stability which is exactly the same as $\theta$-(semi)stability, and GIT quotients which we denote by
\begin{equation*}
\M^\textup{ss}_{Q,\theta}(d):=R_{V,\theta}^\textup{ss}/\!/\!_{\chi_\theta}PG_V\,,\quad\quad 
\M^\textup{st}_{Q,\theta}(d):=R_{V,\theta}^\textup{st}/\!/\!_{\chi_\theta}PG_V\,,
\end{equation*}
the latter being the stable quotient. These varieties corepresent the quotient stacks
\begin{equation*}
\mathfrak{M}^\textup{ss}_{Q,\theta}(d):=[R_{V,\theta}^\textup{ss}/G_V]\,,\quad\quad 
\mathfrak{M}^\textup{st}_{Q,\theta}(d):=[R_{V,\theta}^\textup{st}/G_V]\,,
\end{equation*}
which can be also defined as moduli stacks of families of representations (which are defined e.g.\ in \cite[\S5]{King94Mod}). This is why it is meaningful to call $\M^\textup{ss}_{Q,\theta}(d)$ and $\M^\textup{st}_{Q,\theta}(d)$ the \emph{moduli spaces} of semistable and stable representations.

\begin{remark}\label{Rmk: prop quiv mod}
Here we list the main properties of these moduli spaces:
	\begin{enumerate}
	\item $\M^\textup{ss}_{Q,\theta}(d)$ is a projective variety, and $\M^\textup{st}_{Q,\theta}(d)$ is an open set in it;
	\item $\M^\textup{st}_{Q,\theta}(d)$ is smooth of dimension
	\begin{equation*}
	\dim\M^\textup{st}_{Q,\theta}(d)=\sum_{h\in\Omega}d_{s(h)}d_{t(h)}-\sum_{i\in I}d_i^2+1=1-\chi(d,d)\,,
	\end{equation*}
	where $\chi$ is the Euler form on $K_0(\Rep_\mathbb{C}(Q))\simeq\mathbb{Z}^I$; the stacks $\mathfrak{M}^\textup{ss}_{Q,\theta}(d)$ and $\mathfrak{M}^\textup{st}_{Q,\theta}(d)$ are smooth of dimension $-\chi(d,d)$;
	\item $\M^\textup{ss}_{Q,\theta}(d)$ is a coarse moduli space for S-equivalence classes\footnote{Two representations are \emph{S-equivalent} when the closures of their $PG_V$-orbits in $R_V^\textup{ss}$ intersect, or equivalently when they have the same composition factors as elements of the subcategory $\mathcal{S}_{\nu_\theta}\subset\Rep^\textup{fd}_\mathbb{C}(Q)$ of $\theta$-semistable representations.} of $\theta$-semistable representations on $V$, while the points of $\M^\textup{st}_{Q,\theta}(d)$ correspond to isomorphism classes of $\theta$-stable representations;
	\item \cite[\S5.4]{Rein08Mod} if $d$ is primitive (that is, $\gcd(d_i)_{i\in I}=1$), then $\M^\textup{st}_{Q,\theta}(d)$ admits a universal family.
	\item if $d$ is $\theta$-coprime (see Remark \ref{Rmk: coprime dim vec}), then there are no strictly semistable representations, so $\M^\textup{ss}_{Q,\theta}(d)=\M^\textup{st}_{Q,\theta}(d)$ is smooth and projective, and it admits a universal family.
	\end{enumerate}
\end{remark}

After we have fixed a dimension vector $d=\dimvec V$, we can partition the hyperplane $d^\perp\subset\mathbb{R}^I$ into finitely many locally closed subsets where different $\theta$ give the same $\theta$-(semi)stable representations: call $\theta_1,\theta_2\in d^\perp$ \emph{numerically equivalent} when for any $d'\leq d$ (which means that $d'_i\leq d_i$ for all $i\in I$) $\theta_1\cdot d'$ and $\theta_1\cdot d'$ have the same sign ($\pm 1$ or $0$). Then we have a finite collection $\{W_j\}_{j\in J}$ of rational hyperplanes in $d^\perp$, called \emph{(numerical) walls}, of the form
\begin{equation*}
W(d')=\{\theta\in d^\perp\ |\ \theta\cdot d'=0\}\,,
\end{equation*}
where $d'\in\mathbb{N}^I$ is such that $d'\leq d$ but does not divide $d$. The numerical equivalence classes in $d^\perp$ are the connected components of the locally closed subsets $\cap_{j_1\in J_1}W_{j_1}\setminus\cup_{j_2\in J_2}W_{j_2}$, for some partition $J=J_1\sqcup J_2$ (for $J=J_2$ these are called \emph{(numerical) chambers}). By construction, the subsets $R_{V,\theta}^\textup{ss}$ and $R_{V,\theta}^\textup{st}$ do not change when $\theta$ moves inside a numerical equivalence class, and any such a class contains integral arrays, because it is a cone and the walls are rational.

This means that also for a real or polynomial array $\theta$ orthogonal to $d$ the moduli spaces $\M^{\textup{ss}}_{Q,\theta}(d)$ and $\M^{\textup{st}}_{Q,\theta}(d)$ make sense and are constructed as GIT quotients after choosing a numerically equivalent integral weight $\theta'\in\mathbb{Z}^I$: for example, if $\theta=t\theta_1+\theta_0\in\mathbb{R}[t]^I$, then we can choose $\epsilon>0$ small enough so that $\theta$-(semi)stability is equivalent to $\theta'$-(semi)stability, where $\theta'\in\mathbb{Z}^I$ is some integral array lying in the same numerical equivalence class as $\theta_1+\epsilon\theta_0$.

\begin{example}\label{Ex: Kron modules}
A recurring example in this paper will be the \emph{Kronecker quiver}\\
\begin{center}\begin{tikzcd}
K_n: &-1\arrow[bend left=60]{r}\arrow[bend left=30]{r}\arrow[bend right=30]{r}{\vdots}\arrow[bend right=60]{r} &0
\end{tikzcd}\end{center}
with $n$ arrows. Its representations can be seen as linear maps $f:V_{-1}\otimes Z\to V_0$, where $Z$ is a $n$-dimensional vector space with a fixed basis, or with left modules over the \emph{Kronecker algebra} $\mathbb{C}K_n=\left( \begin{smallmatrix} \mathbb{C}&Z\\ 0&\mathbb{C} \end{smallmatrix} \right)$. Notice that the only arrays $\theta\in\mathbb{Z}^{\{-1,0\}}$ giving nontrivial stability weights $\nu_\theta$ on the representations of $K_n$ are those with $\theta^0>0$, and these are all in the same chamber: a Kronecker module $f$ is, accordingly, (semi)stable if and only if for any subrepresentation $W\subset V$ with $W_0\neq 0$ we have
\begin{equation*}
\frac{\dim W_0}{\dim W_{-1}}\ggeq\frac{\dim V_0}{\dim V_{-1}}\,.
\end{equation*}
This is the usual notion of (semi)stability for Kronecker modules (see e.g.\ \cite[Prop.\ 15]{Drez87Fibr}). We denote by
\begin{equation*}
K(n;d_{-1},d_0):=\M^\textup{ss}_{K_n,(-d_0,d_{-1})}(d)
\end{equation*}
the moduli space of semistable Kronecker modules of dimension vector $d$, and by
\begin{equation*}
K_\textup{st}(n;d_{-1},d_0)\subset K(n;d_{-1},d_0)
\end{equation*}
the stable locus. Some useful facts on these spaces are:
	\begin{enumerate}
	\item if $nd_{-1}<d_0$ or $d_{-1}>nd_0$, then $K(n;d_{-1},d_0)=\emptyset$;
	\item $K(n;d_{-1},nd_{-1})=K(n;nd_0,d_0)=\point$;
	\item $\dim K_\textup{st}(n;d_{-1},d_0)=nd_{-1}d_0+1-d_{-1}^2-d_0^2$;
	\item \cite[Prop.\ 21-22]{Drez87Fibr} we have isomorphisms
	\begin{equation*}
	\begin{array}{c}
	K_{(\textup{st})}(n;d_{-1},d_0)\simeq K_{(\textup{st})}(n;nd_{-1}-d_0,d_{-1})\simeq K_{(\textup{st})}(n;d_0,nd_0-d_{-1})\,,\\
	K_{(\textup{st})}(n;d_{-1},d_0)\simeq K_{(\textup{st})}(n;d_0,d_{-1})\,;
	\end{array}
	\end{equation*}
	\item $K(n;1,k)\simeq K(n;k,1)\simeq\G_k(n)$, the Grassmannian of $k$-dimensional subspaces of $\mathbb{C}^n$;
	\item \cite[Lemme 25]{Drez87Fibr} $K(3;2,2)\simeq\mathbb{P}^5$.
	\end{enumerate}
\end{example}

Often we will consider representations of $Q$ subject to certain \emph{relations}, that is combinations of arrows of length $\geq 2$ generating an ideal $J\subset\mathbb{C}Q$. These form an abelian subcategory $\Rep^\textup{fd}_\mathbb{C}(Q;J)\subset\Rep^\textup{fd}_\mathbb{C}(Q)$ equivalent to left $\mathbb{C}Q/J$-modules of finite dimension. Given $V$ and $\theta$ as above, the representations on $V$ subject to the relations make a $G_V$-invariant closed subscheme $X_J\subset R_V$, and thus the $\theta$-(semi)stable ones are parameterized by moduli stacks $\mathfrak{M}^\textup{ss/st}_{Q,J,\theta}(d)=[R_{V,\theta}^\textup{ss/st}\cap X_J/G_V]$ or by moduli spaces obtained as closed subschemes $M^\textup{ss/st}_{Q,J,\theta}(d)\subset\M^\textup{ss/st}_{Q,\theta}(d)$.

\subsection{Moduli spaces of semistable sheaves}\label{Moduli of sheaves}
Let $X$ be a smooth projective irreducible complex variety polarized by an ample divisor $A\subset X$. $\Coh_{\mathcal{O}_X}$ denotes the abelian category of coherent $\mathcal{O}_X$-modules, and $K_0(X)$ its Grothendieck group.\\
Given a sheaf $\mathcal{E}\in\Coh_{\mathcal{O}_X}$, we denote by $\rk\mathcal{E}$, $\deg_A\mathcal{E}:=c_1(\mathcal{E})\cdot A^{\dim X-1}$ its rank and degree (where $c(\mathcal{E})=1+\sum_{i\geq 1}c_i(\mathcal{E})$ is the Chern class), by $\ch\mathcal{E}$ its Chern character, by
\begin{equation*}
P_{\mathcal{E},A}(t)=\sum_{i=0}^{\dim\mathcal{E}}\frac{\alpha_i(\mathcal{E})}{i!}t^i:=\chi(X;\mathcal{E}(tA))
\end{equation*}
its Hilbert polynomial and by $\chi(\mathcal{E}):=P_{\mathcal{E},A}(0)$ its Euler characteristic. Note that these quantities are additive on short exact sequences: given a class $v\in K_0(X)$, it makes thus sense to write $\rk v,\deg_Av,\ch v$ and $P_{v,A}$. We also write $\mu_A(\mathcal{E}):=\deg_A\mathcal{E}/\rk\mathcal{E}$ for the slope of $\mathcal{E}$, $\dim\mathcal{E}$ for the dimension of its support, and $p_{\mathcal{E},A}(t):=P_{\mathcal{E},A}(t)/\alpha_{\dim\mathcal{E}}(\mathcal{E})$ for the reduced Hilbert polynomial. When $\mathcal{E}$ is torsion-free, we have $\dim\mathcal{E}=\dim X$ and $\rk\mathcal{E}=\alpha_{\dim X}(\mathcal{E})/A^{\dim X}$. Finally, the Hilbert polynomial $P_{v,A}$ can be computed by the Hirzebruch-Riemann-Roch Theorem:
	\begin{enumerate}
	\item if $\dim X=1$ and $g(X)$ is the genus of $X$, then
	\begin{equation}\label{Eq: Hilb pol sh cur}
	P_{v,A}(t)=t\,\rk v\,\deg(A)+\deg v+\rk v(1-g(X))\,;
	\end{equation}
	\item if $\dim X=2$, then
	\begin{equation}\label{Eq: Hilb pol sh sur}
	P_{v,A}(t)=
	t^2\frac{\rk v\,A^2}{2}
	\ +\ t\left(\deg_Av-\rk v\,\frac{A\cdot K_X}{2}\right )+\chi(v)\,,
	\end{equation}
	where $\chi(v)=\rk v\,\chi(X;\mathcal{O}_X)+\left(\ch_2v+c_1(v)c_1(X)/2\right)$.
	\end{enumerate}

Now we recall the main aspects of moduli spaces of semistable coherent sheaves, mainly following \cite{HuyLeh10Geo}. 

\begin{definition}
$\mathcal{E}\in\Coh_{\mathcal{O}_X}$ is said to be \emph{Gieseker-(semi)stable} with respect to $A$ if it is $P_{\cdot,A}$-(semi)stable according to Def.\ \ref{Defn: P-(semi)st}.
\end{definition}

Here we are seeing the Hilbert polynomial as a polynomial stability $P_{\cdot,A}:K_0(X)\to\mathbb{R}[t]$. So $\mathcal{E}$ is Gieseker-(semi)stable if and only if for any coherent subsheaf $0\neq\mathcal{F}\subsetneq\mathcal{E}$ we have the inequality $P_{\mathcal{F},A}\prqG P_{\mathcal{E},A}$, where $\prqG$ is the preorder introduced in Eq.\ \eqref{Eq: def Gies preor}. Lemma \ref{Lem: equiv char Gies ord} says that this inequality is equivalent to
\begin{equation*}
\alpha_{\dim\mathcal{E}}(\mathcal{E})P_{\mathcal{F},A}(t)\lleq\alpha_{\dim\mathcal{E}}(\mathcal{F})P_{\mathcal{E},A}(t)
\end{equation*}
(where as usual $\leq$ is the lexicographical order), so our definition agrees with the standard one given in \cite[\S1.2]{HuyLeh10Geo}. This reformulation of Gieseker stability will turn out to be useful in the rest of the paper.

\begin{definition}
A torsion-free sheaf $\mathcal{E}\in\Coh_{\mathcal{O}_X}$ is \emph{slope-(semi)stable} if for any coherent subsheaf $\mathcal{F}\subsetneq\mathcal{E}$ with $0<\rk\mathcal{F}<\rk\mathcal{E}$ we have $\mu_A(\mathcal{F})\lleq\mu_A(\mathcal{E})$.
\end{definition}

Some remarks on the notion of Gieseker (semi)stability:
	\begin{enumerate}
	\item If $\mathcal{E}$ is Gieseker-semistable, then it is automatically pure (that is, all its subsheaves have the same dimension), and in particular it is torsion-free if and only if $\dim\mathcal{E}=\dim X$.
	\item The category $\Coh_{\mathcal{O}_X}$ is Noetherian and Hilbert polynomials are numerical; then, as discussed after Def.\ \ref{Defn: HN filtr P}, any coherent sheaf $\mathcal{E}$ has a unique Harder-Narasimhan filtration
	\begin{equation*}
	0=\mathcal{E}_0\subsetneq\mathcal{E}_1\subsetneq\cdots\subsetneq\mathcal{E}_\ell=\mathcal{E}
	\end{equation*}
	with Gieseker-semistable quotients $\mathcal{E}_i/\mathcal{E}_{i-1}$ of $\prqG$-decreasing Hilbert polynomials (when $\mathcal{E}$ is pure this simply means that $p_{\mathcal{E}_1,A}>p_{\mathcal{E}_2/\mathcal{E}_1,A}>\cdots>p_{\mathcal{E}/\mathcal{E}_{\ell-1},A}$). We write
	\begin{equation}\label{Eq: min max Hilb pol}
	P_{\mathcal{E},A,\max}:=P_{\mathcal{E}_1,A}\,,\ \ \ \ P_{\mathcal{E},A,\min}:=P_{\mathcal{E}/\mathcal{E}_{\ell-1},A}\,.
	\end{equation}
	Moreover, Gieseker-semistable sheaves with fixed reduced Hilbert polynomial $p\in\mathbb{Q}[t]$ form an abelian subcategory
	\begin{equation}\label{Eq: cat semis coh shvs}
	\mathcal{S}_A(p)\subset\Coh_{\mathcal{O}_X}
	\end{equation}
	of finite length and closed under extensions; two sheaves in $\mathcal{S}_A(p)$ are called \emph{S-equivalent} if they have the same composition factors.
	\item Suppose that $\dim X=1$: any $\mathcal{E}\in\Coh_{\mathcal{O}_X}$ is the direct sum of its torsion-free and torsion parts, so it is pure if and only if they are not both nonzero; a torsion-free $\mathcal{E}$ (which is also a vector bundle) is Gieseker-(semi)stable if and only if it is slope-(semi)stable, and the slope condition can be checked on vector subbundles only; on the other hand, any torsion sheaf is Gieseker-semistable, and it is Gieseker-stable if and only if it is a simple object in $\Coh_{\mathcal{O}_X}$, that is a skyscraper sheaf.
	\end{enumerate}

The main reason to introduce semistability was the construction of moduli spaces:

\begin{theorem}\label{Thm: mod sp sst shvs}
Fix a numerical class $v\in K_{\num}(X)$. There exists a projective $\mathbb{C}$-scheme $\M^\textup{ss}_{X,A}(v)$ which is a coarse moduli space for $S$-equivalence classes of coherent $\mathcal{O}_X$-modules in $v$ which are Gieseker-semistable with respect to $A$. It also has an open subscheme $\M^\textup{st}_{X,A}(v)$ parameterizing isomorphism classes of Gieseker-stable sheaves.
\end{theorem}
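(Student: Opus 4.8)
The plan is to follow the classical GIT construction of Gieseker and Maruyama in the uniform formulation due to Simpson (see \cite[Ch.~4]{HuyLeh10Geo}). The first ingredient is \emph{boundedness}: the family of all Gieseker-semistable sheaves of class $v$ is bounded, so there is an $m\gg0$ for which every such $\mathcal{E}$ is $m$-regular, whence $\mathcal{E}(mA)$ is globally generated, $H^{>0}(X;\mathcal{E}(mA))=0$, and $h^0(X;\mathcal{E}(mA))=\chi(\mathcal{E}(mA))=P_{v,A}(m)=:N$. Fixing a vector space $H\cong\mathbb{C}^N$ together with an isomorphism $H\xrightarrow{\sim}H^0(\mathcal{E}(mA))$ then presents $\mathcal{E}$ as a quotient $H\otimes\mathcal{O}_X(-mA)\twoheadrightarrow\mathcal{E}$, i.e.\ as a point of Grothendieck's projective Quot scheme $\Quot\bigl(H\otimes\mathcal{O}_X(-mA),P_{v,A}\bigr)$.

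Next I would single out the locally closed subset $R$ of the Quot scheme consisting of those quotients $[H\otimes\mathcal{O}_X(-mA)\to\mathcal{E}]$ for which $\mathcal{E}$ is semistable and $H\to H^0(\mathcal{E}(mA))$ is an isomorphism; the natural $\SL(H)$-action preserves $R$, and two points of $R$ lie in the same orbit precisely when the underlying sheaves are isomorphic. To form the quotient I would embed the Quot scheme $\SL(H)$-equivariantly into a Grassmannian by sending a quotient to the induced quotient $H\otimes H^0(\mathcal{O}_X(lA))\twoheadrightarrow H^0(\mathcal{E}((m+l)A))$ for $l\gg0$, and pull back the Pl\"ucker line bundle to obtain an $\SL(H)$-linearization $\mathcal{L}$.

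The crux, and the main obstacle, is to verify that for $m$ and then $l$ chosen large enough, GIT-(semi)stability of a point with respect to $\mathcal{L}$ is \emph{equivalent} to Gieseker-(semi)stability of the sheaf $\mathcal{E}$. This is carried out through the Hilbert--Mumford numerical criterion: one-parameter subgroups of $\SL(H)$ correspond to weighted filtrations of $H$, which induce filtrations of $\mathcal{E}$ by subsheaves, and one must show that the sign of the Mumford weight is governed by the comparison (in the preorder $\prqG$) of the Hilbert polynomials of these subsheaves against $P_{\mathcal{E},A}$. Making this comparison uniform over all $\mathcal{E}$ of class $v$ is exactly where the difficulty lies: it relies on the Le Potier--Simpson estimates bounding $h^0$ of a sheaf in terms of its slope and rank, which force the destabilizing subspaces of $H$ to come from genuinely destabilizing subsheaves and conversely. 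This is the technical heart of the theorem.

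Granting this identification, Mumford's GIT produces the projective quotient $\M^\textup{ss}_{X,A}(v):=R^\textup{ss}/\!/_{\mathcal{L}}\SL(H)$. Standard GIT theory then identifies its closed points with the closed orbits in $R^\textup{ss}$, i.e.\ with polystable sheaves, and hence with $S$-equivalence classes of semistable sheaves of class $v$; the last step is to check that the universal quotient on $R$ descends so as to exhibit $\M^\textup{ss}_{X,A}(v)$ as a \emph{coarse} moduli space corepresenting the functor of families. Finally, on the stable locus the (projectivized) stabilizers are trivial, so the quotient is geometric there, giving the open subscheme $\M^\textup{st}_{X,A}(v)$ whose points are isomorphism classes of Gieseker-stable sheaves.
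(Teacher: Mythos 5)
The paper does not prove this theorem: it is recalled as a standard result with a pointer to \cite{HuyLeh10Geo,Neum09Alg}, and your outline is precisely the Gieseker--Maruyama--Simpson construction presented there (boundedness and $m$-regularity, the Quot-scheme parameter space, the Grassmannian linearization, the Hilbert--Mumford comparison via the Le Potier--Simpson estimates, and the identification of closed orbits with polystable sheaves). Your sketch is correct and correctly flags the genuinely hard step; note only that for the surfaces treated later the paper's own machinery (Theorem \ref{Thm: Msh=Mqr (intro)}) gives an independent construction of these spaces as quiver moduli, but for a general $X$ the classical route you describe is the one the paper relies on.
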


Again, these are \emph{moduli spaces} in that they corepresent moduli stacks $\mathfrak{M}^\textup{ss}_{X,A}(v)$ and $\mathfrak{M}^\textup{st}_{X,A}(v)$ of families of sheaves. See \cite{HuyLeh10Geo,Neum09Alg} for the details.

\begin{remark}\label{Rmk: prop mod sp sh surf}
When $\dim X=2$, some properties of these spaces proven from their construction are:
	\begin{enumerate}
	\item \cite[Theorem 3.4.1]{HuyLeh10Geo} if $\M^\textup{ss}_{X,A}(v)\neq\emptyset$, then the \emph{Bogomolov inequality} holds:
	\begin{equation}\label{Eq: Bogom ineq}
	\Delta(v):=c_1(v)^2-2\rk v\,\ch_2(v)\geq 0\,;
	\end{equation}
	\item if $\deg_H\omega_X<0$, then for any stable $\mathcal{F}$ the obstruction space
	\begin{equation*}
	\Ext^2(\mathcal{F},\mathcal{F})\simeq\Hom(\mathcal{F},\mathcal{F}\otimes\omega_X)^\vee
	\end{equation*}
	vanishes and $\End(\mathcal{F})\simeq\mathbb{C}$, thus the tangent space $\Ext^1(\mathcal{F},\mathcal{F})$ has dimension $1-\chi(\mathcal{F},\mathcal{F})$; hence, by \cite[Corollary 4.5.2]{HuyLeh10Geo}, $\M^\textup{st}_{X,A}(v)$ is smooth of dimension
	\begin{equation}\label{Eq: dim ms sh sur}
	\dim\M^\textup{st}_{X,A}(v)=1-\chi(v,v)=1-(\rk v)^2\chi(\mathcal{O}_X)+\Delta(v)\,;
	\end{equation}
	\item \cite[Corollary 4.6.7]{HuyLeh10Geo} if $\gcd(\rk v,\deg_Av,\chi(v))=1$, then $\M^\textup{ss}_{X,A}(v)$ is equal to $\M^\textup{st}_{X,A}(v)$ and it has a universal family.
	\end{enumerate}
\end{remark}
	
Finally, to simplify the computations it is useful to introduce two alternating forms $\sigma_{\M},\sigma_\chi:K_0(X)\times K_0(X)\to\mathbb{Z}$, given by
\begin{equation*}
\sigma_{\M}(v,w):=\deg_Av\rk w-\deg_Aw\rk v\,,\quad \sigma_\chi(v,w):=\chi(v)\rk w-\chi(w)\rk v\,,
\end{equation*}
and also the $\mathbb{Z}[t]$-valued form
\begin{equation}\label{Eq: sigmaG}
\sigma_{\G}:=t\sigma_{\M}+\sigma_\chi\,.
\end{equation}
Now we can express Gieseker stability on curves and surfaces as stability with respect to these forms, in the sense of Def.\ \ref{Defn: stab alt form}:


\begin{lemma}\label{Lem: Gies,slope-stab and sigma}\ 
	\begin{enumerate}
	\item If $\dim X=1$, then $\sigma_{\M}=\sigma_\chi$, and Gieseker (semi)stability and $\sigma_{\M}$-(semi)stability of sheaves are equivalent; for sheaves of positive rank these are also equivalent to slope-stability;
	\item if $\dim X=2$, then for sheaves of positive rank Gieseker (semi)stability is equivalent to $\sigma_{\G}$-(semi)stability; for torsion-free sheaves, slope semistability is equivalent to $\sigma_{\M}$-semistability.
	\end{enumerate}
\end{lemma}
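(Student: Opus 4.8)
The plan is to unwind the definitions and reduce everything to the Hilbert polynomial computations \eqref{Eq: Hilb pol sh cur} and \eqref{Eq: Hilb pol sh sur}, then apply the reformulation of Gieseker stability via $\prqG$ together with Lemma \ref{Lem: equiv char Gies ord}. The key idea throughout is that $\sigma$-(semi)stability with respect to an alternating form $\sigma$ is, by Definition \ref{Defn: stab alt form}, the condition $\sigma(\mathcal{F},\mathcal{E})\lleq 0$ on all strict subsheaves, and that $\sigma_{\G}=\sigma_{P_{\cdot,A}}$ by direct comparison of \eqref{Eq: sigmaP} and \eqref{Eq: sigmaG}. So the real content is the arithmetic relating the coefficients of the Hilbert polynomial to $\deg_A,\rk,\chi$, combined with the positivity of the rank that lets us normalize.

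\textbf{Part (2), $\dim X=2$.} First I would verify the claim $\sigma_{\G}=\sigma_{P_{\cdot,A}}$ on classes of positive rank. Writing $P_{v,A}(t)=t^2\frac{\rk v\,A^2}{2}+t(\deg_Av-\rk v\,\frac{A\cdot K_X}{2})+\chi(v)$ from \eqref{Eq: Hilb pol sh sur}, I compute $\sigma_{P_{\cdot,A}}(v,w)=P_{v,A}P'_{w,A}-P_{w,A}P'_{v,A}$ as in \eqref{Eq: sigmaP} and collect it by powers of $t$. The point is that for sheaves of positive rank, Remark \ref{Rmk: simplif polyn stab} allows me to replace the coefficients of $P_{v,A}$ by a triangular combination with positive diagonal without changing stability; choosing the combination that clears the $A\cdot K_X$ term in the linear coefficient and normalizes the leading coefficient, the Hilbert polynomial becomes proportional (up to such a triangular change) to $t^2\rk v+t\,\deg_Av+\chi(v)$ (after rescaling by $A^2/2$). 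The form $\sigma_{P_{\cdot,A}}$ attached to $t^2\rk v+\cdots$ is then, after extracting the overall scalar, exactly $t\,\sigma_{\M}+\sigma_\chi=\sigma_{\G}$: the $t^1$-part gives $\rk v\,\deg_Aw-\rk w\,\deg_Av$ (up to sign, matching $\sigma_{\M}$), and the $t^0$-part gives the $\chi$-terms. Hence $P$-stability in the sense of Definition \ref{Defn: P-(semi)st} agrees with $\sigma_{\G}$-stability, and since Gieseker stability is by definition $P_{\cdot,A}$-stability, the first assertion of (2) follows. For the slope claim I would restrict attention to the $t^1$-coefficient: slope semistability compares $\mu_A(\mathcal{F})=\deg_A\mathcal{F}/\rk\mathcal{F}$ with $\mu_A(\mathcal{E})$, and $\mu_A(\mathcal{F})\leq\mu_A(\mathcal{E})$ is equivalent after cross-multiplying (rank positive) to $\deg_A\mathcal{F}\,\rk\mathcal{E}-\deg_A\mathcal{E}\,\rk\mathcal{F}\leq 0$, i.e.\ $\sigma_{\M}(\mathcal{F},\mathcal{E})\leq 0$, which is $\sigma_{\M}$-semistability.

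\textbf{Part (1), $\dim X=1$.} Here $\deg v$ means $\deg_A v$ and I would first note $\sigma_{\M}=\sigma_\chi$ by plugging the curve Hilbert polynomial \eqref{Eq: Hilb pol sh cur} into the definition: since $\chi(v)=\deg v+\rk v(1-g)$, the combination $\chi(v)\rk w-\chi(w)\rk v$ has the $(1-g)$-terms cancel, leaving exactly $\deg v\,\rk w-\deg w\,\rk v=\sigma_{\M}(v,w)$. The equivalence of Gieseker stability with $\sigma_{\M}$-stability then follows from the same computation as in dimension two but one degree lower: the curve Hilbert polynomial is linear, $P_{v,A}(t)=t\,\rk v\,\deg A+\chi(v)$, so $\sigma_{P_{\cdot,A}}$ is (up to the positive scalar $\deg A$) precisely $\sigma_\chi=\sigma_{\M}$. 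The equivalence with slope-stability for positive rank is, as in Part (2), the cross-multiplication identity for $\mu_A$.

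The step I expect to be the main obstacle is the bookkeeping in Part (2): getting the signs and the overall positive scalar right when passing from $\sigma_{P_{\cdot,A}}$ to $\sigma_{\G}$, and carefully invoking Remark \ref{Rmk: simplif polyn stab} to discard the $A\cdot K_X$ correction term and the $A^2/2$ factors without altering the stability relation. In particular I must check that the triangular coefficient change used is legitimate (positive diagonal, upper triangular) and that the leading coefficient is genuinely positive precisely because $\rk v>0$ and $A$ is ample (so $A^2>0$); this positivity is what lets me reduce to the normalized polynomial and identify $\prqG$ with the $\sigma_{\G}$-inequality. The rest is routine linear algebra in the coefficients once this normalization is justified.
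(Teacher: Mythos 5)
There is a genuine gap in Part (2), and it sits exactly at the one nontrivial point of the argument. After your (legitimate) triangular normalization the Hilbert polynomial becomes $P_v(t)=t^2\rk v+t\deg_Av+\chi(v)$, but the associated form is \emph{not} $t\sigma_{\M}+\sigma_\chi$: for quadratic $P$ the expression $P_vP_w'-P_wP_v'$ has degree two, and a direct expansion gives
\begin{equation*}
P_vP_w'-P_wP_v'=t^2\,\sigma_{\M}(v,w)+2t\,\sigma_\chi(v,w)+\sigma_0(v,w)\,,\qquad \sigma_0(v,w):=\chi(v)\deg_Aw-\chi(w)\deg_Av\,.
\end{equation*}
Your bookkeeping is off by one degree (you place $\sigma_{\M}$ in the $t^1$-slot and ``the $\chi$-terms'' in the $t^0$-slot), and as a result you never see the constant term $\sigma_0$, which is a genuine $\deg$--$\chi$ cross term not proportional to either $\sigma_{\M}$ or $\sigma_\chi$. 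The lexicographic conditions $\sigma_{P_{\cdot,A}}(v,w)\lleq0$ and $\sigma_{\G}(v,w)\lleq0$ then differ a priori precisely when $\sigma_{\M}(v,w)=\sigma_\chi(v,w)=0$, where the former additionally demands $\sigma_0(v,w)\leq0$. The paper closes this by observing that if $\rk w\neq0$ then $\sigma_{\M}(v,w)=\sigma_\chi(v,w)=0$ forces $\sigma_0(v,w)=0$ (multiply $\sigma_0$ by $\rk w$ and substitute $\deg_Av\rk w=\deg_Aw\rk v$, $\chi(v)\rk w=\chi(w)\rk v$). This is where the positive-rank hypothesis actually enters in Part (2); in your write-up it is attributed only to normalizing the leading coefficient, which is not where it is needed. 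Without the $\sigma_0$ step the equivalence is not established.

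A second, smaller omission: in the slope statement of Part (2), your cross-multiplication only compares the two conditions on subsheaves with $0<\rk\mathcal{F}<\rk\mathcal{E}$, whereas $\sigma_{\M}$-semistability also constrains subsheaves of full rank, which the definition of slope semistability says nothing about. For these one needs the separate observation that $\rk\mathcal{F}=\rk\mathcal{E}$ gives $\sigma_{\M}(\mathcal{F},\mathcal{E})=-\deg_A(\mathcal{E}/\mathcal{F})\rk\mathcal{E}\leq0$ automatically (the quotient is torsion, hence of nonnegative degree); this is also why the paper can remark that $\mathcal{O}_X$ is slope-stable but only $\sigma_{\M}$-semistable. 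Part (1) of your proposal is fine and matches the paper's computation.
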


For $\dim X=2$, the restriction to positive rank is necessary as $\sigma_{\G}$ vanishes identically on sheaves supported on points. Note also that $\mathcal{O}_X$ is slope-stable but not $\sigma_{\M}$-stable, as $\sigma_{\M}(\mathcal{I}_x,\mathcal{O}_X)=0$, where $\mathcal{I}_x\subset\mathcal{O}_X$ is the ideal sheaf of a point.

\begin{proof}\ 
	\begin{enumerate}
	\item The first statement is just the observation that the alternating form induced by the Hilbert polynomial as in eq.\ \eqref{Eq: sigmaP} is $\sigma_{P_{\cdot,A}}=\deg A\,\sigma_{\M}=\deg A\,\sigma_\chi$. The second statement is also obvious.
	\item In this case we have
	\begin{equation*}
	\sigma_{P_{\cdot,A}}=\frac{t^2}{2}A^2\sigma_{\M}+\left(t\,A^2-\frac{A\cdot K_X}{2}\right)\sigma_\chi+\sigma_0\,,
	\end{equation*}
	where $\sigma_0(v,w):=\chi(v)\deg w-\chi(w)\deg v$. But if $\rk w\neq 0$, then $\sigma_0$ is irrelevant as $\sigma_{\M}(v,w)=\sigma_\chi(v,w)=0$ implies $\sigma_0(v,w)=0$, so $\sigma_{P_{\cdot,A}}$ can be replaced by $\sigma_{\G}=t\sigma_{\M}+\sigma_{\chi}$.\\
	The final claim follows from the equality
	\begin{equation*}
	\mu_A(v)-\mu_A(w)=\frac{1}{\rk v\rk w}\sigma_{\M}(v,w)
	\end{equation*}
	and from the fact that any coherent subsheaf $\mathcal{F}\subsetneq\mathcal{E}$ with $\rk\mathcal{F}=\rk\mathcal{E}$ gives
	\begin{equation*}
	\sigma_{\M}(\mathcal{F},\mathcal{E})=-\deg(\mathcal{E}/\mathcal{F})\rk\mathcal{E}\leq 0\,.
	\end{equation*}
	\end{enumerate}
\end{proof}

\begin{remark}
In fact, the same arguments apply to any heart $\mathcal{A}\subset D^b(X)$ of a bounded t-structure: if $\dim X=1$, then $P_{\cdot,A}$-(semi)stability and $\sigma_{\M}$-(semi)stability in $\mathcal{A}$ are equivalent; if $\dim X=2$, then $P_{\cdot,A}$-(semi)stability and $\sigma_{\G}$-(semi)stability are equivalent for objects of nonzero rank in $\mathcal{A}$.
\end{remark}


%
\subsection{Exceptional sequences}\label{Exceptional collections}
Let $\mathcal{D}$ be a $\mathbb{C}$-linear triangulated category of finite type.

\begin{definition}
An object $E\in\Ob(\mathcal{D})$ is called \emph{exceptional} when, for all $\ell\in\mathbb{Z}$,
\begin{equation*}
\Hom_\mathcal{D}(E,E[\ell])=\left\{\begin{matrix}
\mathbb{C} &\textup{if }\ell=0\,, \\ 
0 &\textup{if }\ell\neq0\,.
\end{matrix}\right.
\end{equation*}
A sequence $\mathfrak{E}=(E_0,...,E_n)$ of exceptional objects is called an \emph{exceptional sequence}, or \emph{exceptional collection}, if
\begin{equation*}
\Hom_\mathcal{D}(E_i,E_j[\ell])=0
\end{equation*}
for all $i>j$ and all $\ell\in\mathbb{Z}$. The exceptional sequence is said to be \emph{strong} if in addition $\Hom_\mathcal{D}(E_i,E_j[\ell])=0$ for all $i,j$ and all $\ell\in\mathbb{Z}\setminus\{0\}$; it is said to be \emph{full} if the smallest triangulated subcategory containing $E_0,...,E_n$ is $\mathcal{D}$.\\
Finally, exceptional collections ${}^\vee\!\mathfrak{E}=({}^\vee\!E_n,...,{}^\vee\!E_0)$ and $\mathfrak{E}^\vee=(E_n^\vee,...,E_0^\vee)$ are respectively called \emph{left dual} and \emph{right dual} to $\mathfrak{E}$ if
\begin{equation*}
\Hom_\mathcal{D}({}^\vee\!E_i,E_j[\ell])=\left\{\begin{matrix}
\mathbb{C} &\textup{if }i=j=n-\ell\,, \\ 
0 &\textup{otherwise}\,,
\end{matrix}\right.\quad\quad
\Hom_\mathcal{D}(E_i,E_j^\vee[\ell])=\left\{\begin{matrix}
\mathbb{C} &\textup{if }i=j=\ell\,, \\ 
0 &\textup{otherwise}\,.
\end{matrix}\right.
\end{equation*}
\end{definition}

Given a full exceptional collection $\mathfrak{E}$, its left and right dual always exist and are unique and full, and they can be realized by repeated \emph{mutations} \cite[\S2]{GorKul04Hel}. Notice also that if a full exceptional collection exists, then the Euler form $\chi$ is nondegenerate, and $K_0(\mathcal{D})=K_{\num}(\mathcal{D})$ is freely generated by the elements of the collection.

\begin{examples}\label{Exs: exc coll}\ 
\begin{enumerate}
\item $D^b(\mathbb{P}^2)$ has a full exceptional collection $\mathfrak{E}=(\mathcal{O}_{\mathbb{P}^2}(-1),\mathcal{O}_{\mathbb{P}^2},\mathcal{O}_{\mathbb{P}^2}(1))$ which has left dual ${}^\vee\!\mathfrak{E}=(\mathcal{O}_{\mathbb{P}^2}(1),\uptau_{\mathbb{P}^2},\wedge^2\uptau_{\mathbb{P}^2}(-1))$ and right dual $\mathfrak{E}^\vee=(\Omhol_{\mathbb{P}^2}^2(1),\Omhol_{\mathbb{P}^2}^1,\mathcal{O}_{\mathbb{P}^2}(-1))$. All these collections are strong.
\item $D^b(\mathbb{P}^1\times\mathbb{P}^1)$ has a full exceptional collection
\begin{equation*}
(\mathcal{O}_X(0,-1)[-1],\mathcal{O}_X[-1],\mathcal{O}_X(1,-1),\mathcal{O}_X(1,0))
\end{equation*}
with left dual given by
\begin{equation*}
\begin{array}{c}
(\mathcal{O}_X(1,0),\mathcal{O}_{\mathbb{P}^1}(1)\boxtimes\uptau_{\mathbb{P}^1}(-1),\uptau_{\mathbb{P}^1}\boxtimes\mathcal{O}_{\mathbb{P}^1},\uptau_{\mathbb{P}^1}\boxtimes\uptau_{\mathbb{P}^1}(-1))\\
\quad\simeq(\mathcal{O}_X(1,0),\mathcal{O}_X(1,1),\mathcal{O}_X(2,0),\mathcal{O}_X(2,1))\,.
\end{array}
\end{equation*}
The latter collection is strong, while the former is not.
\item Let $Q$ be an ordered quiver with relations $J$, whose vertices are labeled by $0,1,...,n$ (this means that there are no arrows from $i$ to $j$ if $j\leq i$). Then we have full exceptional collections $\mathfrak{E},{}^\vee\!\mathfrak{E}$ on the bounded derived category $D^b(Q;J):=D^b(\Rep^\textup{fd}_\mathbb{C}(Q;J))$ made by the objects
\begin{equation*}
E_i=S(i)[i-n]\,,\quad {}^\vee\!E_i=P(i)\,,
\end{equation*}
where $S(i)$ and $P(i)$ denote the standard simple and projective representations associated to each vertex $i$. Moreover, the collection ${}^\vee\!\mathfrak{E}$ is obviously strong and it is left dual to $\mathfrak{E}$ because of the formula
\begin{equation*}
\Ext^\ell(P(i),S(j))=\left\{\begin{matrix}
\mathbb{C} &\textup{if }i=j,\ \ell=0 \\ 
0 &\textup{otherwise}
\end{matrix}\right.\,.
\end{equation*}
\end{enumerate}\end{examples}

In the last example, the full strong collection made by the projective representations is somehow prototypical: suppose that ${}^\vee\!\mathfrak{E}=({}^\vee\!E_n,...,{}^\vee\!E_0)$ is a full and strong exceptional collection on $\mathcal{D}$, and let $T:=\oplus_{i=0}^n{}^\vee\!E_i$; the endomorphism algebra
\begin{equation*}
A:=\End_\mathcal{D}(T)=\begin{pmatrix}
\Hom({}^\vee\!E_n,{}^\vee\!E_n)) &  & \\ 
\vdots & \ddots & \\ 
\Hom({}^\vee\!E_n,{}^\vee\!E_0)) & \cdots & \Hom({}^\vee\!E_0,{}^\vee\!E_0))
\end{pmatrix}
\end{equation*}
is basic, and hence it can be identified with $(\mathbb{C}Q/J)^{\op}$ for some ordered quiver $Q$ with vertices $I=\{0,1,...,n\}$ and relations $J\subset\mathbb{C}Q$;\footnote{See \cite[\S II.3]{AsSiSk06Elem} for details, but note that the opposite convention for path algebras is used there.} in particular, we identify right $A$-modules of finite dimension with representations of $(Q,J)$. Then we have (under some additional hypotheses on $\mathcal{D}$ which are satisfied e.g.\ when $\mathcal{D}=D^b(X)$ for a smooth projective variety $X$):

\begin{theorem}\cite[Thm 6.2]{Bond89Repr}\label{Thm: Baer-Bondal}
${}^\vee\!\mathfrak{E}$ induces a triangulated equivalence
\begin{equation*}
\Phi_{{}^\vee\!\mathfrak{E}}=R\Hom_\mathcal{D}(T,\cdot):\mathcal{D}\longrightarrow D^b(Q;J)\,.
\end{equation*}
\end{theorem}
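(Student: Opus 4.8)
The plan is to recognize $T = \oplus_{i=0}^n {}^\vee\!E_i$ as a \emph{tilting object} and run the standard tilting argument (Bondal--Rickard). Two structural facts do all the work. First, $T$ \emph{generates} $\mathcal{D}$: this is exactly fullness of ${}^\vee\!\mathfrak{E}$, since the smallest thick subcategory containing the summands of $T$ equals the smallest triangulated subcategory containing the ${}^\vee\!E_i$, which is $\mathcal{D}$. Second, $T$ has \emph{no higher self-extensions}, i.e.\ $\Hom_\mathcal{D}(T, T[\ell]) = 0$ for $\ell \neq 0$, which is precisely strongness of the collection. Together with $\End_\mathcal{D}(T) = A \cong (\mathbb{C}Q/J)^{\op}$, these say $R\Hom_\mathcal{D}(T,T) = A$ concentrated in degree $0$; under the identification $\mathrm{mod}\text{-}A \simeq \Rep^\textup{fd}_\mathbb{C}(Q;J)$, the regular right module $A_A$ is the projective generator $\oplus_i P(i)$, so $\Phi_{{}^\vee\!\mathfrak{E}}(T) = \oplus_i P(i)$.

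First I would check that $\Phi := \Phi_{{}^\vee\!\mathfrak{E}} = R\Hom_\mathcal{D}(T, \cdot)$ is a well-defined triangulated functor into $D^b(Q;J)$: since $\mathcal{D}$ is of finite type and $T$ is assembled from finitely many exceptional objects, $R\Hom_\mathcal{D}(T,X)$ has finite total $\mathbb{C}$-dimension, hence bounded cohomology of finite-dimensional $A$-modules, while exactness is automatic for a derived Hom. Full faithfulness then follows by dévissage. On shifts of $T$ the map
\[
\Phi: \Hom_\mathcal{D}(T[m], T[\ell]) \longrightarrow \Hom_{D^b(Q;J)}(\Phi(T)[m], \Phi(T)[\ell])
\]
is an isomorphism, as both sides equal $A$ when $\ell = m$ and vanish otherwise (strongness on the left, $\Ext^{>0}_A(A,A)=0$ on the right). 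I would then fix the source and vary the target: the full subcategory of objects $Y$ for which $\Phi$ is bijective on $\Hom_\mathcal{D}(T[m], Y)$ for all $m$ is closed under shifts, cones (five lemma on the long exact Hom-sequences, $\Phi$ being triangulated) and summands, and it contains $T$; by generation it is all of $\mathcal{D}$. Exchanging the roles of the two arguments upgrades this to bijectivity of $\Phi$ on all $\Hom_\mathcal{D}(X,Y)$.

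For essential surjectivity, the essential image of a fully faithful triangulated functor out of an idempotent-complete category is a thick subcategory of $D^b(Q;J)$, and it contains $\Phi(T) = \oplus_i P(i)$, hence every indecomposable projective $P(i)$. Because $Q$ is acyclic and $J$ is generated by paths of length $\geq 2$, the algebra $A$ has finite global dimension, so every object of $D^b(Q;J)$ has a finite projective resolution and therefore lies in the thick subcategory generated by the $P(i)$. Thus the essential image is everything, and $\Phi$ is an equivalence.

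I expect the main obstacle to be the essential-surjectivity step, specifically verifying that the $P(i)$ thickly generate $D^b(Q;J)$: this is exactly where finiteness of the global dimension of $A$ (ensured by acyclicity of $Q$) is indispensable, since without it $\Phi$ would only be an equivalence onto the subcategory of perfect complexes. By contrast, the dévissage for full faithfulness is routine once generation and strongness are in place; the only genuinely non-formal inputs are the \emph{additional hypotheses on $\mathcal{D}$} (idempotent completeness and finite-dimensional Hom-spaces, satisfied for $\mathcal{D} = D^b(X)$) that make $\Phi$ land in bounded complexes with finite-dimensional cohomology and that legitimize the thick-subcategory arguments.
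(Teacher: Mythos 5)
This statement is not proved in the paper at all: it is quoted verbatim from Bondal \cite{Bond89Repr} (Thm.\ 6.2), so there is no internal proof to compare against. Your argument is the standard tilting-object proof, which is essentially Bondal's own strategy, and it is correct: generation of $\mathcal{D}$ by $T$ (fullness), vanishing of $\Hom_\mathcal{D}(T,T[\ell])$ for $\ell\neq 0$ (strongness), d\'evissage for full faithfulness, and finite global dimension of $\mathbb{C}Q/J$ (acyclicity of $Q$) for essential surjectivity are exactly the right ingredients, and your use of idempotent completeness to pass from the triangulated subcategory generated by the ${}^\vee\!E_i$ to summands such as the individual $P(i)$ is handled correctly. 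The only point deserving more emphasis is the construction of $R\Hom_\mathcal{D}(T,\cdot)$ as an honest triangulated functor into $D^b(Q;J)$: in a bare triangulated category this requires an enhancement or explicit resolutions, and this is precisely the content of the ``additional hypotheses on $\mathcal{D}$'' (satisfied for $\mathcal{D}=D^b(X)$) that the paper alludes to and that you correctly flag, though you treat it as nearly automatic (``exactness is automatic for a derived Hom'') when it is really the one non-formal input of the construction.
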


More explicitly, $\Phi_{{}^\vee\!\mathfrak{E}}$ maps an object $D$ of $\mathcal{D}$ to a complex of representations which at the vertex $i\in\{0,...,n\}$ of $Q$ has the graded vector space $R\Hom_\mathcal{D}({}^\vee\!E_i,D)$.

\begin{remark}\label{Rmk: heart ind by Phi}
Notice that $\Phi_{{}^\vee\!\mathfrak{E}}$ maps each ${}^\vee\!E_i$ to the projective representation $P(i)$ of $Q$ and each dual $E_i$ to the simple $S(i)[i-n]$, and the standard heart $\Rep^\textup{fd}_\mathbb{C}(Q;J)\subset D^b(Q;J)$ is the extension closure of the simple modules $S(i)$. Hence, ${}^\vee\!\mathfrak{E}$ induces a bounded t-structure on $\mathcal{D}$ whose heart is the extension closure of the objects $E_i[n-i]$, $i=0,...,n$.
\end{remark}

\subsection{Families of objects in the derived category}\label{Families obj der cat}
Let $X$ be a smooth projective irreducible complex variety polarized by an ample divisor $A\subset X$.

Take a $\mathbb{C}$-scheme $S$ of finite type and the heart $\mathcal{A}\subset D^b(X)$ of a bounded t-structure. Following \cite[Def.\ 3.7]{Brid02Flops}, by a \emph{family} over $S$ of objects of $\mathcal{A}$ having a common property P we mean an object $\mathcal{F}$ of $D^b(X\times S)$ such that, for any (closed) point $s\in S$, the object $\mathcal{F}_s:=L\iota_s^*\mathcal{F}$ is in $\mathcal{A}$ and has the property P, where $\iota_s:X\to X\times S$ maps $x$ to $(x,s)$. 

We are mostly interested in two kinds of families of objects:
	\begin{enumerate}
	\item Denote by $\mathcal{C}=\Coh_{\mathcal{O}_X}\subset D^b(X)$ the heart of the standard t-structure. If $\mathcal{F}$ is a family of objects of $\mathcal{C}$, then by \cite[Lemma 3.31]{Huyb06Four} it is isomorphic to a coherent $\mathcal{O}_{X\times S}$-module flat over $S$. In particular, the moduli space $\M^\textup{ss}_{X,A}(v)$ of Theorem \ref{Thm: mod sp sst shvs} corepresents the moduli functor of families of $\sigma_{\G}$-semistable objects of class $v\in K_0(X)$ in the heart $\mathcal{C}$.
	\item Suppose that $D^b(X)$ has a full strong exceptional collection ${}^\vee\!\mathfrak{E}$, and consider the equivalence $\Phi_{{}^\vee\!\mathfrak{E}}:D^b(X)\to D^b(Q;J)$ of Theorem \ref{Thm: Baer-Bondal} together with the induced isomorphism $\phi:K_0(X)\to K_0(Q;J)$ and the induced heart $\mathcal{K}:=\Phi_{{}^\vee\!\mathfrak{E}}^{-1}(\Rep^\textup{fd}_\mathbb{C}(Q;J))$. Then families of objects in $\mathcal{K}$ correspond, via $\Phi_{{}^\vee\!\mathfrak{E}}$, to families of $\mathbb{C}Q/J$-modules in the sense of \cite[Def.\ 5.1]{King94Mod} (see e.g.\ \cite[Prop.\ 4.4]{Ohka10Mod} and \cite[\S7.3]{BaCrZh17Nef} for details). In particular, if for $v\in K_0(X)$ we write $d^v:=\dimvec\phi(v)$ and denote by $\theta_{\G,v}\in\mathbb{Z}[t]^I$ the array such that $\theta_{\G,v}\cdot d^w=\sigma_{\G}(v,w)$ for all $w\in K_0(X)$, then the moduli space $\M_{Q,J,\theta_{\G,v}}(d^v)$ corepresents the moduli functor of families of $\sigma_{\G}$-semistable objects of class $v$ in the heart $\mathcal{K}$.
	\end{enumerate}


%
%
\section{Sheaves on \texorpdfstring{$\mathbb{P}^1$}{P1} and Kronecker modules}\label{Semshequirep P1}
In this section the well-known classification of coherent sheaves on $\mathbb{P}^1$ is deduced via the representation theory of the Kronecker quiver $K_2$, as an easy anticipation of the ideas introduced in the next sections.

\subsection{Representations of \texorpdfstring{$K_2$}{K2} and Kronecker complexes on \texorpdfstring{$\mathbb{P}^1$}{P1}}\label{Moduli of reps of K2}

Let $Z$ be a 2-dimensional $\mathbb{C}$-vector space with a basis $\{e_0,e_1\}$, and consider the complex projective line $\mathbb{P}^1:=\mathbb{P}_\mathbb{C}(Z)$. Fix also an integer $k\in\mathbb{Z}$.

We are interested in the finite-dimensional representations of the \emph{Kronecker quiver}\\
\begin{center}$K_2$: \begin{tikzcd}
-1\arrow[bend left=50]{r}\arrow[bend right=50]{r} &0
\end{tikzcd},\end{center}
that is Kronecker modules $f\in\Hom_\mathbb{C}(V_{-1}\otimes Z,V_0)$ (see Example \ref{Ex: Kron modules}), and their relations with sheaves on $\mathbb{P}^1$. The couple $\mathfrak{E}_k=(E_{-1},E_0):=(\mathcal{O}_{\mathbb{P}^1}(k-1),\mathcal{O}_{\mathbb{P}^1}(k))$ is a full strong exceptional collection in $D^b(\mathbb{P}^1)$, and so is its left dual collection, which is given by ${}^\vee\!\mathfrak{E}_k=({}^\vee\!E_0,{}^\vee\!E_{-1}):=(\mathcal{O}_{\mathbb{P}^1}(k),\uptau_{\mathbb{P}^1}(k-1))$, where $\uptau_{\mathbb{P}^1}\simeq\mathcal{O}_{\mathbb{P}^1}(2)$ denotes the tangent sheaf. Hence, the tilting sheaf
\begin{equation*}
T_k:=\oplus_{i=-1}^0{}^\vee\!E_i=\mathcal{O}_{\mathbb{P}^1}(k)\oplus\uptau_{\mathbb{P}^1}(k-1)
\end{equation*}
induces by Theorem \ref{Thm: Baer-Bondal} a derived equivalence
\begin{equation*}
\Psi_k:=\Phi_{{}^\vee\!\mathfrak{E}_k}:D^b(\mathbb{P}^1)\to D^b(K_2)\,,
\end{equation*}
as $\End_{\mathcal{O}_{\mathbb{P}^1}}(T_k)$ may be identified with $\mathbb{C}K_2^{\op}$ via the isomorphism $H^0(\mathbb{P}^1;\uptau_{\mathbb{P}^1}(-1))\cong Z$. $\Psi_k$ sends a complex $\mathcal{F}^\bullet$ of coherent sheaves to the complex of representations
\begin{equation}\label{Eqn: images of cplx under der eq}
R\Hom_{\mathcal{O}_{\mathbb{P}^1}}(\uptau_{\mathbb{P}^1}(k-1),\mathcal{F}^\bullet)\rightrightarrows
R\Hom_{\mathcal{O}_{\mathbb{P}^1}}(\mathcal{O}_{\mathbb{P}^1}(k),\mathcal{F}^\bullet)\,.
\end{equation}
We denote by $\mathcal{C}\subset D^b(\mathbb{P}^1)$ the heart of the standard t-structure and by $\mathcal{K}_k\subset D^b(\mathbb{P}^1)$ the heart of the t-structure induced from the standard one in $D^b(K_2)$ via the equivalence $\Psi_k$.

\begin{lemma}
The objects of $\mathcal{K}_k$ are, up to isomorphism in $D^b(\mathbb{P}^1)$, the \emph{Kronecker complexes}
\begin{equation}\label{Eq. Kron cplx P1}
V_{-1}\otimes\mathcal{O}_{\mathbb{P}^1}(k-1)\longrightarrow V_0\otimes\mathcal{O}_{\mathbb{P}^1}(k)\,.
\end{equation}
\end{lemma}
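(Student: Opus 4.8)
The plan is to exploit that $\Psi_k$ is a triangulated equivalence carrying the standard t-structure on $D^b(K_2)$ to the t-structure with heart $\mathcal{K}_k$, so that it restricts to an equivalence of abelian categories $\Psi_k\colon\mathcal{K}_k\xrightarrow{\sim}\Rep^\textup{fd}_\mathbb{C}(K_2)$. By Remark~\ref{Rmk: heart ind by Phi}, applied to the length-two collection $\mathfrak{E}_k=(E_{-1},E_0)=(\mathcal{O}_{\mathbb{P}^1}(k-1),\mathcal{O}_{\mathbb{P}^1}(k))$, the heart $\mathcal{K}_k$ is the extension closure in $D^b(\mathbb{P}^1)$ of the shifted exceptional objects, which here (the top index playing the role of $n=0$, giving shifts $0$ and $1$) are $E_0=\mathcal{O}_{\mathbb{P}^1}(k)$ and $E_{-1}[1]=\mathcal{O}_{\mathbb{P}^1}(k-1)[1]$; moreover these are precisely the preimages under $\Psi_k$ of the simple representations $S(0)$ and $S(-1)$. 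First I would record this identification of the two simple objects, which is the only input from the general machinery that I need.

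Next I would run the following computation for an arbitrary representation $M=(V_{-1},V_0,f)$ of $K_2$, with $f\in\Hom_\mathbb{C}(V_{-1}\otimes Z,V_0)$. In $\Rep^\textup{fd}_\mathbb{C}(K_2)$ there is a canonical short exact sequence $0\to S(0)\otimes V_0\to M\to S(-1)\otimes V_{-1}\to0$, whose subobject is the subrepresentation $(0,V_0)$ (closed under the arrows, since the source part is zero) and whose quotient is $(V_{-1},0)$. As short exact sequences in a heart complete to distinguished triangles, applying $\Psi_k^{-1}$ yields a triangle $\mathcal{O}_{\mathbb{P}^1}(k)\otimes V_0\to\Psi_k^{-1}(M)\to\mathcal{O}_{\mathbb{P}^1}(k-1)[1]\otimes V_{-1}\xrightarrow{w}\mathcal{O}_{\mathbb{P}^1}(k)[1]\otimes V_0$ in $D^b(\mathbb{P}^1)$. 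Rotating it exhibits $\Psi_k^{-1}(M)$ as the cone of a morphism of sheaves $\tilde f\colon\mathcal{O}_{\mathbb{P}^1}(k-1)\otimes V_{-1}\to\mathcal{O}_{\mathbb{P}^1}(k)\otimes V_0$ (where $\tilde f=w[-1]$), i.e.\ as the two-term complex $[\,V_{-1}\otimes\mathcal{O}_{\mathbb{P}^1}(k-1)\xrightarrow{\tilde f}V_0\otimes\mathcal{O}_{\mathbb{P}^1}(k)\,]$ concentrated in degrees $-1,0$. This is exactly the shape \eqref{Eq. Kron cplx P1}, so every object of $\mathcal{K}_k$ is isomorphic to a Kronecker complex.

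For the converse I would observe that any Kronecker complex is the cone of a morphism $\phi\colon V_{-1}\otimes\mathcal{O}_{\mathbb{P}^1}(k-1)\to V_0\otimes\mathcal{O}_{\mathbb{P}^1}(k)$ of sheaves, and since $\Hom_{D^b(\mathbb{P}^1)}(\mathcal{O}_{\mathbb{P}^1}(k-1),\mathcal{O}_{\mathbb{P}^1}(k))=H^0(\mathbb{P}^1,\mathcal{O}_{\mathbb{P}^1}(1))$ is concentrated in degree $0$, reversing the above computation places such a cone in the extension closure $\mathcal{K}_k$. The main thing to get right—and the only genuinely non-formal point—is the identification of the connecting morphism $w$ with the data $f$: concretely, that under $\Psi_k$ the two-dimensional space $\Ext^1_{\mathcal{K}_k}(\mathcal{O}_{\mathbb{P}^1}(k-1)[1],\mathcal{O}_{\mathbb{P}^1}(k))\cong H^0(\mathbb{P}^1,\mathcal{O}_{\mathbb{P}^1}(1))$ is matched with the arrow space $\Ext^1_{K_2}(S(-1),S(0))\cong Z$, so that the extension class of the canonical sequence, which is precisely $f$, becomes the morphism $\tilde f$ defining the complex. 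This hinges on the isomorphism $H^0(\mathbb{P}^1;\uptau_{\mathbb{P}^1}(-1))\cong Z$ used to identify $\End_{\mathcal{O}_{\mathbb{P}^1}}(T_k)$ with $\mathbb{C}K_2^\textup{op}$, together with the bookkeeping of the shift so that the cone sits in degrees $-1,0$ rather than a translate; neither is hard, but both must be tracked in order to make the correspondence $M\leftrightarrow$ complex explicit rather than merely up to isomorphism.
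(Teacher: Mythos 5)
Your proposal is correct and follows essentially the same route as the paper: both identify $\mathcal{K}_k$ via Remark~\ref{Rmk: heart ind by Phi} as the extension closure of $\mathcal{O}_{\mathbb{P}^1}(k-1)[1]$ and $\mathcal{O}_{\mathbb{P}^1}(k)$ (the preimages of the simples $S(-1),S(0)$) and then conclude that its objects are the two-term Kronecker complexes. The only difference is that you spell out, via the canonical short exact sequence $0\to S(0)\otimes V_0\to M\to S(-1)\otimes V_{-1}\to 0$ and the cone construction, the final step that the paper's proof leaves as an assertion.
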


\begin{proof}
Let $A:=\End_{\mathcal{O}_{\mathbb{P}^1}}(T_k)$. $\Psi_k$ maps the exceptional objects ${}^\vee\!E_i$, $i=0,-1$, to the standard projective right $A$-modules $\Id_{{}^\vee\!E_i}A$, which correspond to the Kronecker modules $P_0=(0\to\mathbb{C})$ and $P_{-1}=(\mathbb{C}\otimes Z\overset{\Id}{\to}Z)$; now the heart $\Rep^\textup{fd}_\mathbb{C}(K_2)$, which is the extension closure of the simple modules $S_{-1},S_0$, is mapped to the extension closure $\mathcal{K}_k$ of $E_{-1}[1],E_0$ (see Remark \ref{Rmk: heart ind by Phi}), whose objects are Kronecker complexes.
\end{proof}

$\Psi_k$ induces an isomorphism $\psi_k:K_0(\mathbb{P}^1)\to K_0(K_2)$ between the Grothendieck groups, which are free of rank 2. Hence, coordinates of an element $v\in K_0(\mathbb{P}^1)$ are provided either by the couple $(\rk v,\deg v)$ or by the dimension vector
\begin{equation*}
d^v=(d^v_{-1},d^v_0):=\dimvec(\psi_k(v))\,.
\end{equation*}
The simple representations $S(-1)$ and $S(0)$, whose dimension vectors are $(1,0)$ and $(0,1)$ respectively, correspond to the complexes $\mathcal{O}_{\mathbb{P}^1}(k-1)[1]$, with $(\rk,\deg)=(-1,1-k)$, and $\mathcal{O}_{\mathbb{P}^1}(k)$, with $(\rk,\deg)=(1,k)$. So we deduce that the linear transformation between the two sets of coordinates is given by
	\begin{equation}\label{Eq: coord trans Gr grp P1}
	\begin{array}{c}
	\begin{pmatrix}
	\rk v\\ 
	\deg v
	\end{pmatrix}=\begin{pmatrix}
	-1 & 1\\ 
	1-k & k
	\end{pmatrix}
	\begin{pmatrix}
	d^v_{-1} \\ 
	d^v_0
	\end{pmatrix}\,,\ \ \ \ 
	\begin{pmatrix}
	d^v_{-1} \\ 
	d^v_0
	\end{pmatrix}
	=\begin{pmatrix}
	-k & 1\\ 
	1-k & 1
	\end{pmatrix}
	\begin{pmatrix}
	\rk v\\ 
	\deg v
	\end{pmatrix}
	\end{array}\,.
	\end{equation}

\subsection{Semistable sheaves and Kronecker complexes}
As in the end of \S\ref{Moduli of sheaves}, we consider the alternating form $\sigma_{\M}:K_0(\mathbb{P}^1)\times K_0(\mathbb{P}^1)\to\mathbb{Z}$ given by
\begin{equation*}
\sigma_{\M}(v,w):=\deg v\rk w-\deg w\rk v\,.
\end{equation*}
This is also the alternating form $\sigma_Z$ induced by the central charge $Z=-\deg+i\rk$ as in equation \eqref{Eq: sigmaZ}. We have seen in Lemma \ref{Lem: Gies,slope-stab and sigma} that, on the standard heart $\mathcal{C}=\Coh_{\mathcal{O}_{\mathbb{P}^1}}$, $\sigma_{\M}$ reproduces Gieseker-stability. Now we also consider $\sigma_{\M}$-stability on the heart $\mathcal{K}_k$:

\begin{definition}
A Kronecker complex $K_V$ is said to be \emph{(semi-)stable} when it is $\sigma_{\M}$-(semi)stable in $\mathcal{K}_k$ (Def.\ \ref{Defn: stab alt form}), that is when for any nonzero Kronecker subcomplex $K_W\subset K_V$ we have
\begin{equation*}
\deg K_V\rk K_W-\deg K_W\rk K_V\lleq 0\,.
\end{equation*}
\end{definition}

If we fix $v\in K_0(\mathbb{P}^1)$, then we can write
\begin{equation*}
\nu_{\M,v}(w):=\sigma_{\M}(v,w)=-d^v_0d^w_{-1}+d^v_{-1}d^w_0=\theta_{\M,v}\cdot d^w\,,
\end{equation*}
where the dot is the standard scalar product in $\mathbb{Z}^{\{-1,0\}}$ and 
\begin{equation*}
\theta_{\M,v}:=\binom{-d^v_0}{d^v_{-1}}=\binom{(k-1)\rk v-\deg v}{-k \rk v+\deg v}\,.
\end{equation*}
So, via the equivalence $\Psi_k$, (semi)stability of a Kronecker complex $K_V$ with $[K_V]=v$ is equivalent to $\theta_{\M,v}$-(semi)stability of the corresponding representation $V$ of $K_2$; being $\theta_{\M,v}^0=d^v_{-1}\geq0$, this is the usual definition of (semi)stable Kronecker module (see Ex.\ \ref{Ex: Kron modules}).

Consider an object in the intersection of the hearts $\mathcal{K}_k$ and $\mathcal{C}$ in $D^b(\mathbb{P}^1)$: this can be seen either as an injective Kronecker complex or as the sheaf given by its cokernel. The following observation shows that for such an object the two notions of stability coincide:

\begin{proposition}\label{Prop: semist sh-Kk on P1}
$\mathcal{K}_k$ is the heart obtained by tilting the standard heart $\mathcal{C}$ with respect to the central charge $Z=-\deg+i\rk$ at phase $\phi_k:=\arg(-k+i)/\pi$, as in \S\ref{Stab triang cat}. In particular, for any $\phi\in[\phi_k,1]$ the categories of $Z$-semistable objects with phase $\phi$ in the two hearts coincide: $\mathcal{S}^{(\mathcal{C})}_Z(\phi)=\mathcal{S}^{(\mathcal{K}_k)}_Z(\phi)$.
\end{proposition}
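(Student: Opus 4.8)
The plan is to identify $\mathcal{K}_k$ with the tilt $\mathcal{C}^\#$ of the standard heart $\mathcal{C}=\Coh_{\mathcal{O}_{\mathbb{P}^1}}$ at the torsion pair determined by $Z$, and then to read off the statement about semistable objects from Lemma \ref{Lem: tilted hearts compatible}. First I would record that $Z=-\deg+i\rk$ is a central charge with the HN property on $\mathcal{C}$: for a nonzero sheaf $\Im Z=\rk\geq0$, and $\rk=0$ forces $\Re Z=-\deg<0$ since a nonzero torsion sheaf on a curve has positive length; the HN property is the classical existence of Harder--Narasimhan filtrations for slope-stability, noting that the $Z$-slope $\mu_Z=\deg/\rk$ is the usual slope. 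Thus $Z$ induces, as in \eqref{Eqn: tors pair ind by Z}, a torsion pair $(\mathcal{T}^Z_{\geq\phi_k},\mathcal{F}^Z_{<\phi_k})$ on $\mathcal{C}$ at $\phi_k=\arg(-k+i)/\pi$, hence a tilted heart $\mathcal{C}^\#$ whose objects are the $E$ with $H^0_\mathcal{C}(E)\in\mathcal{T}^Z_{\geq\phi_k}$, $H^{-1}_\mathcal{C}(E)\in\mathcal{F}^Z_{<\phi_k}$, and no other cohomology.

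Next I would check that the two line bundles generating $\mathcal{K}_k$ lie in $\mathcal{C}^\#$. By the previous Lemma together with Remark \ref{Rmk: heart ind by Phi}, $\mathcal{K}_k$ is the extension closure of $E_0=\mathcal{O}_{\mathbb{P}^1}(k)$ and $E_{-1}[1]=\mathcal{O}_{\mathbb{P}^1}(k-1)[1]$. Since $Z(\mathcal{O}(k))=-k+i$ has phase exactly $\phi_k$, the stable line bundle $\mathcal{O}(k)$ lies in $\mathcal{T}^Z_{\geq\phi_k}\subset\mathcal{C}^\#$; since $Z(\mathcal{O}(k-1))=-(k-1)+i$ has strictly smaller argument than $-k+i$ (same imaginary part, larger real part), $\mathcal{O}(k-1)\in\mathcal{F}^Z_{<\phi_k}$ and so $\mathcal{O}(k-1)[1]\in\mathcal{C}^\#$. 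As $\mathcal{C}^\#$ is a heart, it is extension-closed, whence $\mathcal{K}_k\subseteq\mathcal{C}^\#$.

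The key step is to upgrade this inclusion to an equality, and this is where I expect the only genuine content to lie. Here I would invoke the general fact that nested hearts of two bounded t-structures on a triangulated category must coincide: both $\mathcal{K}_k$ (induced from the standard bounded t-structure on $D^b(K_2)$ through $\Psi_k$) and $\mathcal{C}^\#$ (a tilt of a bounded t-structure) are hearts of bounded t-structures, so $\mathcal{K}_k\subseteq\mathcal{C}^\#$ forces $\mathcal{K}_k=\mathcal{C}^\#$. The short argument is that the nesting of hearts propagates to the aisles, giving $\mathcal{D}^{\leq0}_{\mathcal{K}_k}\subseteq\mathcal{D}^{\leq0}_{\mathcal{C}^\#}$ and $\mathcal{D}^{\geq0}_{\mathcal{K}_k}\subseteq\mathcal{D}^{\geq0}_{\mathcal{C}^\#}$, and such a pair of inclusions between t-structures is forced to be an equality by passing to orthogonals. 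Alternatively, one can verify $\mathcal{D}^{\leq0}_{\mathcal{K}_k}\subseteq\mathcal{D}^{\leq0}_\mathcal{C}\subseteq\mathcal{D}^{\leq1}_{\mathcal{K}_k}$ directly, using that every Kronecker complex has $\mathcal{C}$-cohomology concentrated in degrees $-1,0$, and conclude via the characterization \eqref{Eq: rel tilt t-str} of tilted t-structures.

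Finally, the ``in particular'' follows at once. Having identified $\mathcal{K}_k=\mathcal{C}^\#$ as the tilt of $\mathcal{C}$ at $(\mathcal{T}^Z_{\geq\phi_k},\mathcal{F}^Z_{<\phi_k})$, Lemma \ref{Lem: tilted hearts compatible} gives that $\mathcal{C}$ and $\mathcal{K}_k$ are $(\sigma_P,v)$-compatible for every class $v$ with $\phi_Z(v)\in[\phi_k,1]$, where $P_v(t)=t\Im Z(v)-\Re Z(v)$ is the polynomial attached to $Z$ (so that $\sigma_P$-stability is exactly $Z$-stability). By the reformulation of compatibility following Definition \ref{Defn: (sigma,v)-compat hearts}, this says precisely that $Z$-semistability in $\mathcal{C}$ and in $\mathcal{K}_k$ agree on objects of such classes; ranging over all classes of a fixed phase $\phi\in[\phi_k,1]$ yields $\mathcal{S}^{(\mathcal{C})}_Z(\phi)=\mathcal{S}^{(\mathcal{K}_k)}_Z(\phi)$.
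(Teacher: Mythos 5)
Your proof is correct, but it reaches the identification $\mathcal{K}_k=\mathcal{C}^\#$ by a different mechanism than the paper. The paper first invokes Polishchuk's lemma to see that $\mathcal{K}_k$, lying in $\langle\mathcal{C},\mathcal{C}[1]\rangle_{\ext}$, is the tilt of $\mathcal{C}$ at the torsion pair $(\mathcal{C}\cap\mathcal{K}_k,\,\mathcal{C}\cap\mathcal{K}_k[-1])$, and then identifies this torsion pair with $(\mathcal{T}^Z_{\geq\phi_k},\mathcal{F}^Z_{<\phi_k})$ by checking, via Serre duality and semistability, that an arbitrary sheaf $\mathcal{G}\in\mathcal{T}^Z_{\geq\phi_k}$ has $\Ext^1(\uptau_{\mathbb{P}^1}(k-1),\mathcal{G})=\Ext^1(\mathcal{O}_{\mathbb{P}^1}(k),\mathcal{G})=0$ (and dually for the free part), so that the $Z$-torsion pair sits componentwise inside the other one and the two must coincide. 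You instead work in the opposite direction: you only need to locate the two generators $\mathcal{O}_{\mathbb{P}^1}(k)$ and $\mathcal{O}_{\mathbb{P}^1}(k-1)[1]$ of $\mathcal{K}_k$ inside the tilt $\mathcal{C}^\#$ by comparing phases, deduce $\mathcal{K}_k\subseteq\mathcal{C}^\#$ from extension-closedness, and then appeal to the rigidity of nested hearts of bounded t-structures (correctly justified by passing to aisles and orthogonals). Your route trades the paper's concrete $\Ext$-vanishing computations for two pieces of soft machinery (extension closure of hearts and the nested-hearts fact), which makes it shorter and avoids Serre duality entirely; the paper's argument has the advantage of exhibiting explicitly which sheaves constitute each torsion class, which is the information actually reused in the proof of Corollary \ref{Cor: Bir-Gro Thm}. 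Both handle the ``in particular'' clause the same way, via Lemma \ref{Lem: tilted hearts compatible}.
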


We denote by $\mathpzc{R}_k\subset K_0(\mathbb{P}^1)$ the cone spanned by the objects of $\mathcal{C}\cap\mathcal{K}_k$, that is
\begin{equation}\label{Eq: region Rk on P1}
\begin{array}{rcl}
\mathpzc{R}_k &:=&\{v\in K_0(\mathbb{P}^1)\ |\ \rk v\geq 0\textup{ and }\deg v\geq k\rk v\}\\
  &=&\{v\in K_0(\mathbb{P}^1)\ |\ d^v_0\geq d^v_{-1}\geq 0\}
\end{array}
\end{equation}

The proposition implies (as a special case of Lemma \ref{Lem: tilted hearts compatible}) that for any class $v\in\mathpzc{R}_k$ the hearts $\mathcal{C},\mathcal{K}$ are $(\sigma_{\G},v)$-compatible (Def.\ \ref{Defn: (sigma,v)-compat hearts}). Namely, we have:
	\begin{itemize}
	\item[(C1)] a slope-(semi)stable sheaf $\mathcal{F}\in\mathcal{C}$ with $[\mathcal{F}]=v$ belongs to $\mathcal{K}_k$, that is, it is isomorphic to the cokernel of an injective Kronecker complex $K_V\in\mathcal{K}_k$; similarly, a (semi)stable Kronecker complex $K_V\in\mathcal{K}_k$ with $[K_V]=v$ belongs to $\mathcal{C}$, which means that it is injective;
	\item[(C2)] an object $K_V\simeq\mathcal{F}$ of class $v$ in $\mathcal{C}\cap\mathcal{K}_k$ is (semi)-stable as a Kronecker complex if and only if it is (semi)-stable as a sheaf.
	\end{itemize}

\begin{proof}
The heart $\mathcal{K}_k$ lies in $\langle\mathcal{C},\mathcal{C}[1]\rangle_{\ext}$ and then by \cite[Lemma 1.1.2]{Poli07Const} it is obtained by tilting $\mathcal{C}$ at the torsion pair $(\mathcal{T}_k,\mathcal{F}_k)$ given by $\mathcal{T}_k:=\mathcal{C}\cap\mathcal{K}_k$ and $\mathcal{F}_k:=\mathcal{C}\cap\mathcal{K}_k[-1]$.
Now fix $k\in\mathbb{Z}$, take the phase $\phi_k=\arg(-k+i)/\pi$ of $\mathcal{O}_{\mathbb{P}^1}(k)$, and consider the torsion pair $(\mathcal{T}^Z_{\geq\phi_k},\mathcal{F}^Z_{<\phi_k})$ induced by $Z$, eqn.\ \eqref{Eqn: tors pair ind by Z}. Now, using the explicit form \eqref{Eqn: images of cplx under der eq} of $\Psi_k$, we will see that $\mathcal{T}^Z_{\geq\phi_k}\subset\mathcal{T}_k$ and $\mathcal{F}^Z_{<\phi_k}\subset\mathcal{F}_k$, which implies that the two torsion pairs must coincide: a sheaf $\mathcal{G}\in\mathcal{T}^Z_{\geq\phi_k}$ satisfies $\Ext^1(\uptau_{\mathbb{P}^1}(k-1),\mathcal{G})=\Ext^1(\mathcal{O}_{\mathbb{P}^1}(k),\mathcal{G})=0$ by Serre duality, and thus it belongs to $\mathcal{K}_k$, and hence to $\mathcal{T}_k$. On the other hand, for a sheaf $\mathcal{F}\in\mathcal{F}^Z_{<\phi_k}$ we have $\Hom(\uptau_{\mathbb{P}^1}(k-1),\mathcal{F})=\Hom(\mathcal{O}_{\mathbb{P}^1}(k),\mathcal{F})=0$, which means that it belongs to $\mathcal{K}_k[-1]$, and hence to $\mathcal{F}_k$.
\end{proof}

\begin{figure}
\centering
\includegraphics[height=3cm]{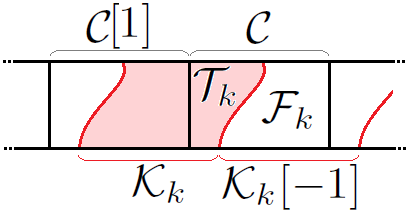}
\caption{The hearts $\mathcal{C},\mathcal{K}_k\subset D^b(\mathbb{P}^1)$}
\end{figure}

\begin{figure}
\centering
\includegraphics[height=7cm]{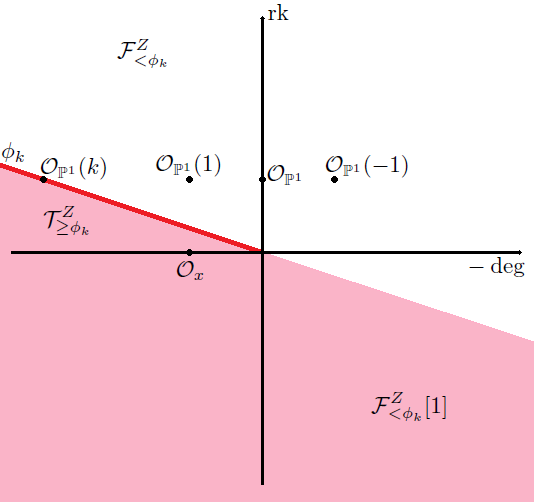}
\caption{The Grothendieck group $K_0(\mathbb{P}^1)$}
\end{figure}

\begin{corollary}\label{Cor: Bir-Gro Thm} (Birkhoff-Grothendieck Theorem)
Every coherent sheaf $\mathcal{F}\in\Coh_{\mathcal{O}_{\mathbb{P}^1}}$ is a direct sum of line bundles $\mathcal{O}_{\mathbb{P}^1}(\ell)$ and structure sheaves of fat points.
\end{corollary}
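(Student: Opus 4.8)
The plan is to deduce the classification from the well-understood representation theory of $K_2$ via the derived equivalence $\Psi_k$. Since $\Coh_{\mathcal{O}_{\mathbb{P}^1}}$ is a Krull--Schmidt category, every coherent sheaf decomposes uniquely into indecomposables, so it suffices to identify the indecomposable objects of $\mathcal{C}$. The equivalence $\Psi_k$ carries indecomposable objects of $D^b(\mathbb{P}^1)$ bijectively to indecomposable objects of $D^b(K_2)$, and because $\Rep^\textup{fd}_\mathbb{C}(K_2)$ is hereditary, every object of $D^b(K_2)$ is the sum of its shifted cohomologies; hence each indecomposable object is a shift $M[j]$ of a single indecomposable representation $M$ of $K_2$. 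Thus the indecomposable sheaves are precisely those shifts $\Psi_k^{-1}(M[j])$ that happen to lie in $\mathcal{C}$.

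First I would invoke Kronecker's classification of the indecomposable representations of $K_2$: up to isomorphism these are the \emph{preprojective} modules of dimension vector $(n,n+1)$, the \emph{preinjective} modules of dimension vector $(n+1,n)$ (for $n\geq 0$), and, for each point of $\mathbb{P}^1$, a homogeneous tube of \emph{regular} modules of dimension vector $(n,n)$. Applying the coordinate change \eqref{Eq: coord trans Gr grp P1} to these dimension vectors gives $(\rk,\deg)=(1,n+k)$, $(-1,n+1-k)$ and $(0,n)$ respectively. So the preprojectives sit over line bundles $\mathcal{O}_{\mathbb{P}^1}(\ell)$ with $\ell\geq k$, the regulars over torsion sheaves of length $n$, and the preinjectives over objects of negative rank.

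Next I would use the tilting description from Proposition \ref{Prop: semist sh-Kk on P1}: since $\mathcal{K}_k$ is the tilt of $\mathcal{C}$ at the torsion pair $(\mathcal{T}_k,\mathcal{F}_k)$, every indecomposable $M\in\mathcal{K}_k$ lies either in $\mathcal{T}_k=\mathcal{C}\cap\mathcal{K}_k$, so that $M$ is a sheaf, or in $\mathcal{F}_k[1]=\mathcal{C}[1]\cap\mathcal{K}_k$, so that $M[-1]$ is a sheaf; in either case exactly one integer shift of $M$ lands in $\mathcal{C}$. The sign of the rank decides the case: the preprojectives and regulars have nonnegative rank and are genuine sheaves, yielding the line bundles $\mathcal{O}_{\mathbb{P}^1}(\ell)$ with $\ell\geq k$ and the length-$n$ torsion sheaves, while the preinjectives have rank $-1$, so their $[-1]$-shift is the sheaf $\mathcal{O}_{\mathbb{P}^1}(k-n-1)$, giving exactly the $\mathcal{O}_{\mathbb{P}^1}(\ell)$ with $\ell\leq k-1$. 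For a single fixed $k$ these two families already exhaust all line bundles. To recognize the regular indecomposables as fat points I would match the tube at a point $[\alpha:\beta]$ of the parameter $\mathbb{P}^1$ with the torsion sheaves supported at the zero $x$ of the section $\alpha s_0+\beta s_1\in H^0(\mathcal{O}_{\mathbb{P}^1}(1))$ corresponding to the two arrows; since $\mathcal{O}_{\mathbb{P}^1,x}$ is a discrete valuation ring, its indecomposable length-$n$ modules are precisely the fat points $\mathcal{O}_{\mathbb{P}^1}/\mathfrak{m}_x^{\,n}$. Collecting the cases, the indecomposable coherent sheaves are exactly the line bundles and the fat points, and Krull--Schmidt finishes the proof.

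The only genuinely nonformal ingredient is Kronecker's classification of the indecomposable $K_2$-modules; this is the heart of the argument and is where the hard content sits, everything else being bookkeeping with \eqref{Eq: coord trans Gr grp P1} and the tilt. The point demanding a little care is the translation of the abstract parameter $\mathbb{P}^1$ of regular modules into honest points of the projective line, i.e.\ verifying that the regular tubes are intertwined with the local structure of torsion sheaves exactly as claimed; this, together with checking that precisely one shift of each indecomposable module falls into $\mathcal{C}$, is the main thing to pin down.
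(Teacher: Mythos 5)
Your proof is correct and follows essentially the same route as the paper: both reduce the statement, via the equivalence $\Psi_k$ and the tilt, to Kronecker's classification of the indecomposable representations of $K_2$. The only difference is organizational: the paper chooses $k$ depending on $\mathcal{F}$ so that every indecomposable summand lies in $\mathcal{T}^Z_{\geq\phi_k}=\mathcal{C}\cap\mathcal{K}_k$ and the preinjective family never occurs as a sheaf, whereas you fix $k$ and recover the line bundles of degree $<k$ from the $[-1]$-shifts of the preinjectives.
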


\begin{proof}
For an object in $\mathcal{T}^Z_{\geq k}$, being indecomposable is the same when considered in $\mathcal{C}$ or $\mathcal{K}_k$. All the indecomposable representations of $K_2$ are listed below for $n\geq 1$ (see e.g.\ \cite[Theorem 4.3.2]{Bens98Repr}):
\begin{equation*}
\mathbb{C}^n\overset{\mathbb{I}_n}{\underset{J_n(\lambda)}{\rightrightarrows}}\mathbb{C}^n\,,\quad
\mathbb{C}^n\overset{J_n(0)^t}{\underset{\mathbb{I}_n}{\rightrightarrows}}\mathbb{C}^n\,,\quad
\mathbb{C}^n\overset{(\mathbb{I}_n\ 0)^t}{\underset{(0\ \mathbb{I}_n)^t}{\rightrightarrows}}\mathbb{C}^{n+1}\,,\quad
\mathbb{C}^{n+1}\overset{(\mathbb{I}_n\ 0)}{\underset{(0\ \mathbb{I}_n)}{\rightrightarrows}}\mathbb{C}^n\,,
\end{equation*}
where $J_n(\lambda)$ is the $n$-dimensional Jordan matrix with eigenvalue $\lambda\in\mathbb{C}$. The first three representations correspond to injective Kronecker complexes whose cokernels are, respectively, a torsion sheaf with length $n$ support at the point $[-\lambda:1]$, a torsion sheaf with length $n$ support at $[1:0]$ and the line bundle $\mathcal{O}_{\mathbb{P}^1}(k+n)$. The last representation gives a Kronecker complex which is not in $\mathcal{C}$.\\
Now take any $\mathcal{F}\in\Coh_{\mathcal{O}_{\mathbb{P}^1}}$ and choose $k\in\mathbb{Z}$ such that the minimum HN phase of $\mathcal{F}$ is at least $\phi_k=\arg(-k+i)/\pi$. If $\mathcal{F}=\oplus_i\mathcal{F}_i$ is the decomposition of $\mathcal{F}$ in indecomposables, then every $\mathcal{F}_i$ has HN phases $\geq\phi_k$, so $\mathcal{F}_i\in\mathcal{T}^Z_{\geq\phi_k}$, and then it is also an indecomposable object in $\mathcal{K}_k$, which means that it is isomorphic to one of the three sheaves listed above.
\end{proof}

\subsection{Moduli spaces}
Fix $k\in\mathbb{Z}$ and a class $v\in\mathpzc{R}_k$ (see Eq.\ \eqref{Eq: region Rk on P1}). By Proposition \ref{Prop: semist sh-Kk on P1} (see also \S\ref{Families obj der cat}), the moduli spaces $\M^\textup{ss}_{\mathbb{P}^1}(v)$ and $\M^\textup{ss}_{K_2,\theta_{\M,v}}(d^v)=K(2;d^v_{-1},d^v_0)$ are isomorphic, as well as the subspaces of stable objects:
\begin{equation*}
\M^\textup{ss}_{\mathbb{P}^1}(v)\simeq K(2;d^v_{-1},d^v_0)\,,\ \ \ \M^\textup{st}_{\mathbb{P}^1}(v)\simeq K_\textup{st}(2;d^v_{-1},d^v_0)\,.
\end{equation*}
In this subsection we will describe explicitly these moduli spaces for all values of $v\in\mathpzc{R}_k$, that is for all $d\in\mathbb{Z}^{\{-1,0\}}$ with $d_0\geq d_{-1}\geq 0$.

First of all, as already mentioned in Ex.\ \ref{Ex: Kron modules}, we have:

\begin{lemma}\cite[Prop.\ 21-22]{Drez87Fibr}
There are isomorphisms
\begin{equation*}
\begin{array}{c}
K(2;d_{-1},d_0)\simeq K(2;2d_{-1}-d_0,d_{-1})\simeq K(2;d_0,2d_0-d_{-1})\,,\\
K(2;d_{-1},d_0)\simeq K(2;d_0,d_{-1})\,,
\end{array}
\end{equation*}
restricting to isomorphisms of the stable loci.
\end{lemma}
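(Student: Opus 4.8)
The plan is to exhibit all the isomorphisms as instances of two elementary operations on Kronecker modules---a \emph{duality} (transpose) and the \emph{reflection functors} at the two vertices of $K_2$---each stated isomorphism being a single such operation. Recall from Example~\ref{Ex: Kron modules} that a point of $K(2;d_{-1},d_0)$ is an $S$-equivalence class of a semistable map $f\colon V_{-1}\otimes Z\to V_0$ (with $\dim V_{-1}=d_{-1}$, $\dim V_0=d_0$, $\dim Z=2$), semistability being the displayed slope condition on subrepresentations. I first record the effect on dimension vectors: transpose sends $(d_{-1},d_0)\mapsto(d_0,d_{-1})$, the reflection at the sink sends $(d_{-1},d_0)\mapsto(2d_{-1}-d_0,d_{-1})$, and the reflection at the source sends $(d_{-1},d_0)\mapsto(d_0,2d_0-d_{-1})$. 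Thus the second displayed line is the duality, while the two outer isomorphisms of the first line are the sink and source reflections (its middle isomorphism then following by transitivity).

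The duality is the easiest. Using the basis $\{e_0,e_1\}$ to identify $Z\cong Z^\vee$, transposition $f\mapsto f^\vee$ carries a module on $(V_{-1},V_0)$ to one on $(V_0^\vee,V_{-1}^\vee)$, of dimension vector $(d_0,d_{-1})$. Being contravariant, it interchanges subrepresentations of $f$ with quotient representations of $f^\vee$, so the slope inequality tested on subobjects of $f$ becomes the corresponding inequality tested on subobjects of $f^\vee$; hence semistability is preserved, and likewise stability and $S$-equivalence. The assignment is algebraic and intertwines the $G_V$-action with the contragredient action on $V^\vee$ compatibly with the linearizations $\chi_\theta$, so it descends to an isomorphism of GIT quotients and restricts to the stable loci.

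For the reflections I would use kernel and cokernel constructions, the point being that semistability makes the relevant ranks locally constant. At the sink $0$, consider the tautological map $F\colon V_{-1}\otimes Z\to V_0$. If $f$ is semistable and $d_{-1}>0$, then $F$ is surjective: otherwise the subrepresentation $(V_{-1},\im F)$, for which $\rk F<d_0$, would violate the slope condition. Hence $\ker F$ has the constant dimension $2d_{-1}-d_0$, and by adjunction (and $Z\cong Z^\vee$) the inclusion $\ker F\hookrightarrow V_{-1}\otimes Z$ is a Kronecker module $\ker F\otimes Z\to V_{-1}$ of dimension vector $(2d_{-1}-d_0,d_{-1})$; this reflection is an equivalence onto semistable modules, its inverse being the analogous cokernel construction, and it preserves stability and $S$-equivalence. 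The reflection at the source $-1$ is symmetric: semistability forces the adjoint $\hat f\colon V_{-1}\to V_0\otimes Z^\vee$ to be injective, and its cokernel yields a module $V_0\otimes Z\to\coker\hat f$ of dimension vector $(d_0,2d_0-d_{-1})$ (equivalently, this reflection is the conjugate of the first by the duality). The few degenerate dimension vectors with $d_{-1}=0$ or $d_0=0$ are handled directly, both sides being a point or empty by facts (1) and (2) of Example~\ref{Ex: Kron modules}.

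The main obstacle is not the bijection on closed points---that is the classical reflection/duality statement---but promoting each construction to an isomorphism of the moduli \emph{spaces}. I would carry this out in families: over the semistable locus $R_{V,\theta}^{\textup{ss}}$ the universal map $\mathcal{V}_{-1}\otimes Z\to\mathcal{V}_0$ has locally constant rank by the surjectivity (resp.\ injectivity) just proved, so its kernel (resp.\ cokernel) is a vector bundle defining a family of reflected modules; this produces a morphism of the stacks $\mathfrak{M}^{\textup{ss}}_{K_2,\theta}(d)$ with a two-sided inverse, which descends to the corepresenting coarse spaces of Remark~\ref{Rmk: prop quiv mod} and restricts to the stable loci. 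The delicate points are exactly the constancy of rank (where semistability is used) and the compatibility with $S$-equivalence, ensuring the map is defined on the full GIT quotient and not merely on the stable part. Most economically, of course, one may simply invoke Example~\ref{Ex: Kron modules}(4), of which the lemma is the case $n=2$.
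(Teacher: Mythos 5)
The paper does not actually prove this lemma: it is quoted from Dr\'ezet \cite[Prop.\ 21--22]{Drez87Fibr}, and the same isomorphisms (for general $n$) are already listed in Example \ref{Ex: Kron modules}(4) with the same citation, so your closing remark that one may ``simply invoke'' that example is precisely what the paper does. Your reconstruction of the argument behind the citation --- transpose duality for the second line, Bernstein--Gelfand--Ponomarev reflections at the sink and at the source for the first, with the middle isomorphism by transitivity --- is the standard proof, and its structure is sound: the dimension-vector bookkeeping is correct, semistability does force the tautological map to be surjective (resp.\ its adjoint to be injective), so the kernel (resp.\ cokernel) has constant rank and the construction works in families over $R^{\textup{ss}}_{V,\theta}$, and descent to the coarse spaces follows from corepresentability of the GIT quotients. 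The one substantive step you assert without checking is that the reflected module is again (semi)stable --- this is really the content of Dr\'ezet's propositions, whereas everything else is formal. It does hold, and the verification is short: a subrepresentation of the reflected module $g\colon\ker F\otimes Z\to V_{-1}$ amounts to a pair $U\subset\ker F\cap(W\otimes Z)$, $W\subset V_{-1}$; writing $w=\dim W$ and $r=\dim f(W\otimes Z)$ one has $\dim U\leq 2w-r$, so with $\theta'=(-d_{-1},\,2d_{-1}-d_0)$ one gets $\theta'\cdot(\dim U,w)\geq -d_{-1}(2w-r)+(2d_{-1}-d_0)w=d_{-1}r-d_0w\geq 0$, the last inequality being semistability of $f$ applied to the subrepresentation $(W,f(W\otimes Z))$; the stable case uses strict inequalities and the inverse (cokernel) direction is symmetric. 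With that computation inserted, and the analogous (easier) check for the duality which you did sketch, your proof is complete.
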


We can visualize these isomorphisms as follows: consider the linear transformation $M = \left( \begin{smallmatrix} 2&-1\\ 1&0 \end{smallmatrix} \right)$ acting in the $(d_{-1},d_0)$ plane; the orbits of $M$ are on lines of slope 1. The region $\mathpzc{R}=\{d_0\geq d_{-1}>0\}$ and the diagonal $d_{-1}=d_0$ are invariant under $M$. The lemma says that integral points in $\mathpzc{R}$ lying in the same $M$-orbit, as well as symmetric points with respect to the diagonal $d_{-1}=d_0$, give isomorphic moduli spaces.

Thus it is enough to consider the wedge $d_0\geq 2d_{-1}$ and the diagonal $d_{-1}=d_0$. We start by analyzing the diagonal:

\begin{lemma}\label{Lem: isom Kron=Pd}
$K(2;1,1)=K_\textup{st}(2;1,1)\simeq\mathbb{P}^1$ and $K(2;m,m)\simeq\mathbb{P}^m$, $K_\textup{st}(2;m,m)=\emptyset$ for $m\geq 2$.
\end{lemma}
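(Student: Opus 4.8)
The plan is to analyze $K(2;m,m)$ through the finite-length abelian category $\mathcal{S}_{\theta}$ of $\theta_{\M,v}$-semistable Kronecker modules of slope $1$ (i.e.\ of dimension vectors $(n,n)$), and to read off the three claims from its simple objects. First I would observe, using the semistability criterion of Example \ref{Ex: Kron modules}, that any direct summand of a slope-$1$ semistable module is itself semistable of slope $1$, hence again has dimension vector of the form $(n',n')$; therefore, by the classification of indecomposable $K_2$-representations recalled in the proof of Corollary \ref{Cor: Bir-Gro Thm}, every object of $\mathcal{S}_{\theta}$ is a direct sum of \emph{regular} indecomposables (those of the first two families in that list, of dimension $(n,n)$, with maps $\mathbb{I}_n,J_n(\lambda_0)$ or $J_n(0)^t,\mathbb{I}_n$). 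A direct inspection shows that such a regular indecomposable is $\theta$-semistable, is $\theta$-stable precisely when $n=1$, and for $n\ge 2$ contains a $1$-dimensional regular module as a subobject of the same slope. Hence the simple objects of $\mathcal{S}_{\theta}$ are exactly the $1$-dimensional regular modules $S_{[a:b]}$ (with the two arrows acting as $a,b\in\mathbb{C}$), parameterized by $[a:b]\in\mathbb{P}^1$.

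From this the statements follow on points. A $\theta$-stable module is a simple object of $\mathcal{S}_{\theta}$ and so has dimension $(1,1)$; thus $K_\textup{st}(2;m,m)=\emptyset$ for $m\ge 2$, while $K_\textup{st}(2;1,1)$ is the $\mathbb{P}^1$ of simple modules $S_{[a:b]}$ (in agreement with the case $K(n;1,k)\simeq\G_k(n)$ of Example \ref{Ex: Kron modules}, which also settles the scheme structure for $m=1$). Since the vector $(1,1)$ is $\theta$-coprime (Remark \ref{Rmk: coprime dim vec}), there are no strictly semistable modules in that dimension, so $K(2;1,1)=K_\textup{st}(2;1,1)\simeq\mathbb{P}^1$. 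For general $m$, each $S$-equivalence class of $(m,m)$-dimensional semistable modules has a unique polystable representative $\bigoplus_{i=1}^{m}S_{[a_i:b_i]}$, hence is recorded by the unordered tuple $\{[a_i:b_i]\}_{i=1}^m$; so the closed points of $K(2;m,m)$ are in bijection with $\Sym^m\mathbb{P}^1\cong\mathbb{P}^m$.

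To promote this bijection to an isomorphism of varieties I would use the determinant morphism. Writing a representation as $f\colon V_{-1}\otimes Z\to V_0$ with $V_{-1}=V_0=\mathbb{C}^m$, the assignment $z\mapsto\det\bigl(f(-\otimes z)\bigr)$ is a homogeneous form of degree $m$ on $Z$, i.e.\ an element of $\Sym^m Z^\vee$, and it is nonzero whenever $f$ is semistable (by the structure above it is then a product of linear forms). Under $G_V=\GL_m\times\GL_m$ this form transforms by the character $\chi_{\theta}$, so $[f]\mapsto[\det f(-\otimes -)]$ descends to a morphism $\delta\colon K(2;m,m)\to\mathbb{P}(\Sym^m Z^\vee)\cong\mathbb{P}^m$. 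For a polystable $\bigoplus_i S_{[a_i:b_i]}$ the form factors as $\prod_i\ell_i$ with each $\ell_i\in Z^\vee$ determined by $[a_i:b_i]$, so $\delta$ sends an $S$-equivalence class to the binary form whose roots, with multiplicity, are its Jordan--H\"older factors. This is the classical identification $\Sym^m\mathbb{P}^1\cong\mathbb{P}^m$ on points, and in particular $\delta$ is bijective.

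Finally I would argue that $\delta$ is an isomorphism: it is a proper (hence finite) bijective morphism of projective varieties which restricts to an isomorphism over the open locus of forms with $m$ distinct roots, so it is birational, and since $\mathbb{P}^m$ is normal, Zariski's main theorem forces $\delta$ to be an isomorphism. Alternatively, one may identify the graded algebra of $\chi_{\theta}$-semi-invariants on $R_V$ with the polynomial ring $\mathbb{C}[c_0,\dots,c_m]$ on the coefficients $c_0,\dots,c_m$ of $\det f(-\otimes -)$, whence $K(2;m,m)=\Proj\,\mathbb{C}[c_0,\dots,c_m]=\mathbb{P}^m$ directly from the GIT construction. I expect this last step to be the main obstacle: passing from the transparent set-theoretic description by $S$-equivalence classes to a genuine scheme-theoretic isomorphism, which needs either the normality and finiteness input for Zariski's main theorem or the explicit first fundamental theorem for the semi-invariants of $K_2$.
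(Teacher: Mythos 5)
Your proof is correct, and its core is the same as the paper's: both identify $K(2;m,m)$ with $\mathbb{P}(\mathbb{C}[Z]_m)\cong\mathbb{P}^m$ via the determinant map $f\mapsto[\det f_{(-)}]$, after checking that this map is $G_V$-invariant, surjective, and injective on polystable orbits. Where you differ is in the supporting steps, and in both places your version is the more complete one. For the classification of semistable and polystable $(m,m)$-dimensional modules, the paper simply asserts that a polystable representation can be simultaneously diagonalized (which silently presupposes $K_\textup{st}(2;n,n)=\emptyset$ for $n\geq2$), whereas you derive the whole structure of $\mathcal{S}_\theta$ from the Kronecker classification of indecomposables already quoted in the proof of Corollary \ref{Cor: Bir-Gro Thm}; this cleanly yields the emptiness of the stable locus, the polystable normal form, and the nonvanishing of $\det f_z$ on all of $R^\textup{ss}$ in one stroke. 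For the final descent, the paper stops at ``this is enough to conclude that $\phi$ is the categorical quotient map,'' while you supply the missing justification (bijective proper morphism onto the normal variety $\mathbb{P}^m$, birational in characteristic zero, hence an isomorphism by Zariski's main theorem; or the first fundamental theorem for $K_2$-semi-invariants) --- and you correctly identify this as the genuinely delicate point. Two cosmetic remarks: $\det f_{(-)}$ transforms under $G_V$ by $\chi_{(-1,1)}$, which is an $m$-th root of the linearizing character $\chi_{(-m,m)}$ rather than that character itself (harmless, since you only use the induced map to projective space, and the two linearizations define the same quotient); and your claim that $\delta$ restricts to an isomorphism over the locus of distinct roots is unproven as stated, but unnecessary, since bijectivity alone gives birationality in characteristic zero.
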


Given a Kronecker module $f\in\Hom_\mathbb{C}(V_{-1}\otimes Z,V_0)$, we will often use the notation $f_z:=f(\cdot\otimes z)\in\Hom_\mathbb{C}(V_{-1},V_0)$ for $z\in Z$, and $f_j:=f_{e_j}$ for $j=0,1$ (here $\{e_0,e_1	\}$ is the basis of $Z$ that we fixed from the beginning); the index $j$ is tacitly summed when repeated.

\begin{proof}
Clearly, $f:\mathbb{C}\otimes Z\to\mathbb{C}$ is semistable if and only if it is stable if and only if $f\neq 0$; we can identify thus $R^\textup{ss}\cong\mathbb{C}^2\setminus\{0\}$; $PG_V\simeq\mathbb{C}^\times$ acts by scalar multiplication, hence the quotient is $\mathbb{P}^1$.\\
Now let $m\geq 2$: first observe that the semistable and stable loci in the representation space $R=R_{\mathbb{C}^m\oplus\mathbb{C}^m}=\Hom_\mathbb{C}(\mathbb{C}^m\otimes Z,\mathbb{C}^m)$ are
\begin{equation*}
R^\textup{ss}=\left\{f\in R\ \bigg|\ \max_{z\in Z}\rk f_z=m\right\}\,,\ \ \ R^\textup{st}=\emptyset\,.
\end{equation*}
Indeed, the set $U:=\left\{f\in R\ |\ \max\rk f_z=m\right\}$ is open and $G_V$-invariant, and it is contained in $R^\textup{ss}$ because there are no subrepresentations of dimension $(d'_{-1},d'_0)$ with $d'_{-1}>d'_0$, being the generic $f_z$ an isomorphism. Moreover, any polystable representation $f$ can be written, up to isomorphism, as $f_j=\diag(a^1_j,\cdots,a^m_j)$ for $[a^1],...,[a^m]\in\mathbb{P}^1_\mathbb{C}$ (unique up to permutations), and thus $f_z=z^jf_j$ has nonvanishing determinant for general $z\in Z$. So $U$ contains the polystable locus $R^\textup{ps}$, which implies that $U=R^\textup{ss}$.\\
Now let $\mathbb{C}[Z]_m$ be the vector space of homogeneous polynomial functions $Z\to\mathbb{C}$ of degree $m$, and $\mathbb{P}_\mathbb{C}(\mathbb{C}[Z]_m)$ be the projective space of lines in it. We can consider the $G_V$-invariant morphism $\phi:R^\textup{ss}\to\mathbb{P}_\mathbb{C}(\mathbb{C}[Z]_m)$ sending a module $f$ to the class of the polynomial function $z\mapsto\det f_z$. $\phi$ sends the polystable representation $f_j=\diag(a^1_j,\cdots,a^m_j)$ to the class $[\prod_{\ell=1}^m a^\ell_je^{*j}]$, where $\{e^{*0},e^{*1}\}$ is the dual basis of $\{e_0,e_1\}$; this shows that $\phi$ maps non-isomorphic polystable representations to distinct classes and that $\phi$ is surjective, because every element $h\in\mathbb{C}[Z]_m$ can be factored as $h=\prod_{\ell=1}^m a^\ell_je^{*j}$. This is enough to conclude that $\phi$ is the categorical quotient map.
\end{proof}

Now we turn to dimension vectors with $d_0\geq 2d_{-1}$:

\begin{lemma}
Let $d_1,d_0$ be positive integers:
	\begin{enumerate}
	\item If $d_0>2d_{-1}$, then $K(2;d_{-1},d_0)=\emptyset$;
	\item $K(2;1,2)=K_\textup{st}(2;1,2)=\point$;
	\item for $m>1$ we have $K(2;m,2m)=\point$ and $K_\textup{st}(2;m,2m)=\emptyset$.
	\end{enumerate}
\end{lemma}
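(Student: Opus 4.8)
Parts~(1) and~(2) are essentially immediate from the facts collected in Example~\ref{Ex: Kron modules}. For part~(1), the hypothesis $d_0>2d_{-1}$ is exactly the emptiness condition $nd_{-1}<d_0$ of Example~\ref{Ex: Kron modules}(1) with $n=2$, so $K(2;d_{-1},d_0)=\emptyset$. For part~(2), the equality $K(2;1,2)=\point$ is Example~\ref{Ex: Kron modules}(2) with $d_{-1}=1$ (or Example~\ref{Ex: Kron modules}(5), giving $K(2;1,2)\simeq\G_2(2)$, a single point). To get $K(2;1,2)=K_\textup{st}(2;1,2)$ I would note that the dimension vector $d=(1,2)$ is $\theta$-coprime for the relevant weight $\theta=(-d_0,d_{-1})=(-2,1)$, in the sense of Remark~\ref{Rmk: coprime dim vec}: $d$ is primitive and $\theta\neq0$ spans a subspace of dimension $\#I-1=1$, so there are no strictly semistable modules.

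For part~(3), the point $K(2;m,2m)=\point$ is again Example~\ref{Ex: Kron modules}(2), now with $d_{-1}=m$; I would also sketch the direct argument in the style of Lemma~\ref{Lem: isom Kron=Pd}. Since $\dim(V_{-1}\otimes Z)=2m=\dim V_0$, a representation is a square map $f\colon\mathbb{C}^m\otimes Z\to\mathbb{C}^{2m}$, and the semistable locus consists exactly of the bijective such $f$ (a non-injective $f$ is destabilized by the subrepresentation $(V_{-1},\im f)$, while an injective one has $\dim W_0\geq 2\dim W_{-1}$ for every subrepresentation). Any two bijective maps lie in a single $G_V$-orbit (choose $g_{-1}=\Id$ and $g_0=f'f^{-1}$), so the quotient is one point. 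For $K_\textup{st}(2;m,2m)=\emptyset$ I would invoke the dimension count of Example~\ref{Ex: Kron modules}(3): here $\dim K_\textup{st}(2;m,2m)=4m^2+1-m^2-4m^2=1-m^2$, which is negative for $m>1$, so by Remark~\ref{Rmk: prop quiv mod}(2) the stable locus must be empty.

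The only point deserving care is this last vanishing, since the dimension-count argument formally relies on reading a negative expected dimension as forcing emptiness. I would therefore back it up with an explicit destabilizer: for any line $W_{-1}\subset V_{-1}$ the subrepresentation $(W_{-1},f(W_{-1}\otimes Z))$ satisfies $\dim W_0\leq 2=2\dim W_{-1}$ and is proper once $m>1$, so it violates the strict slope inequality required for stability; this makes the conclusion unconditional. All the remaining assertions are direct consequences of the recorded properties of moduli of Kronecker modules, so no serious obstacle arises.
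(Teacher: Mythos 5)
Your proof is correct and, at its core, follows the same route as the paper: the paper's argument for all three parts is the destabilizing subrepresentation $W=(V_{-1},\im f_0+\im f_1)$ (which gives part (1) directly and shows that for $d_0=2d_{-1}$ semistability forces $f$ to be bijective, equivalently $f_0,f_1$ injective with complementary images), followed by the observation that all bijective $f$ form a single $G_V$-orbit. The only real difference is that for part (1) you defer to the general fact recorded in Example \ref{Ex: Kron modules}(1) rather than reproving it, whereas the paper gives the one-line destabilizer argument; since your own treatment of the non-surjective case in part (3) is exactly that argument, nothing is actually missing. Your two additions -- the $\theta$-coprimality of $(1,2)$ for the equality $K=K_{\textup{st}}$, and the explicit line destabilizer $(W_{-1},f(W_{-1}\otimes Z))$ for $m>1$ -- make explicit two points the paper merely asserts, and your caution about not reading the negative expected dimension alone as forcing emptiness is well placed.
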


\begin{proof}
Take a Kronecker module $f:V_{-1}\otimes Z\to V_0$ of dimension vector $(d_{-1},d_0)$, and consider the submodule $W$ with $W_{-1}=V_{-1}$ and $W_0=\im f_0+\im f_1$. If $d_0>2d_{-1}$, then $f$ is always unstable, as $W$ destabilizes it.\\
So we assume now on that $d_0=2d_{-1}$. $f$ is semistable if and only if $f_0,f_1$ are both injective and have complementary images, that is $V_0=\im f_1\oplus\im f_2$ (otherwise $W$ is again destabilizing); moreover, all semistable representations are always isomorphic, as each of them is completely described by the images $f_j(e_k)$ of the basis vectors of $V_{-1}$, and these form a basis of $V_0$. Finally, a semistable $f$ is stable if and only if $(d_{-1},d_0)=(1,2)$.
\end{proof}

Collecting the last three lemmas we can now describe explicitly all the moduli spaces $K(2;d_{-1},d_0)$: for $p,q\in\mathbb{Z}$ we define lines $\ell_p,r_q$ in the $(d_{-1},d_0)$-plane as follows: $\ell_p$ is the line $\{pd_0=(p+1)d_{-1}\}$ if $p>0$, the diagonal $\{d_0=d_{-1}\}$ for $p=0$ and the line $\{pd_{-1}=(p-1)d_0\}$ for $p<0$, while $r_q:=\{d_0=d_{-1}+q\}$.

\begin{theorem}
We assume that $d_{-1}>0$ and $d_0>0$.
	\begin{enumerate}
	\item $K(2;d_{-1},d_0)$ is empty unless $(d_{-1},d_0)$ lies on a line $\ell_p$;
	\item if $(d_{-1},d_0)\in\ell_p\cap r_q$ for some $p,q\in\mathbb{Z}$ with $q\neq 0,\pm 1$, then $K(2;d_{-1},d_0)=\point$, while $K_\textup{st}(2;d_{-1},d_0)=\emptyset$;
	\item if there is some $p\in\mathbb{Z}$ such that $(d_{-1},d_0)\in\ell_p\cap r_1$ or $(d_{-1},d_0)\in\ell_p\cap r_{-1}$, then $K(2;d_{-1},d_0)=K_\textup{st}(2;d_{-1},d_0)=\point$;
	\item if $(d_{-1},d_0)\in\ell_0=r_0$, then $K(2;d_{-1},d_0)\simeq\mathbb{P}^{d_0}$; moreover $K_\textup{st}(2;1,1)\simeq\mathbb{P}^1$, while $K_\textup{st}(2;m,m)=\emptyset$ for $m\geq 2$.
	\end{enumerate}
\end{theorem}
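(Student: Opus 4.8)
The plan is to deduce the general statement from the base cases already in hand---Lemma~\ref{Lem: isom Kron=Pd} for the diagonal, and the preceding lemma covering the lines $d_0=2d_{-1}$ and $d_0>2d_{-1}$---by transporting them along the two families of isomorphisms recorded just before Lemma~\ref{Lem: isom Kron=Pd}. I would encode these as two self-maps of the lattice of dimension vectors,
\[
S\colon(d_{-1},d_0)\longmapsto(d_0,d_{-1})\,,\qquad M\colon(d_{-1},d_0)\longmapsto(2d_{-1}-d_0,d_{-1})\,,
\]
each of which preserves both $K(2;\cdot\,,\cdot)$ and its stable locus up to isomorphism. First I would apply $S$ at most once to reduce to $d_0\ge d_{-1}$, and set $q:=d_0-d_{-1}\ge 0$, so that $(d_{-1},d_0)$ lies on $r_q$. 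The crucial elementary observation is that $M$ maps $r_q$ to itself and acts there by $d_{-1}\mapsto d_{-1}-q$; hence, whenever $q>0$, iterating $M$ yields a descent that strictly decreases $d_{-1}$ while fixing the difference $q$.

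If $q=0$ the vector lies on the diagonal $\ell_0=r_0$, and Lemma~\ref{Lem: isom Kron=Pd} gives $K(2;d_{-1},d_0)\simeq\mathbb{P}^{d_0}$ together with the stated stable locus, which is exactly statement~(4). If $q>0$, I would run the descent until $d_{-1}\le q$. The governing dichotomy is arithmetic: a point $(d_{-1},d_{-1}+q)$ of $r_q$ lies on some $\ell_p$ if and only if $q\mid d_{-1}$, in which case $p=d_{-1}/q$ and $M^{\,p-1}$ carries it to $(q,2q)\in\ell_1\cap r_q$; whereas if $q\nmid d_{-1}$ the descent terminates at a vector with $0<d_{-1}<q$, hence with slope $d_0/d_{-1}>2$, which is empty by the preceding lemma (if already $d_{-1}<q$ at the start, emptiness is immediate). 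This establishes statement~(1): nonemptiness forces $(d_{-1},d_0)$ onto some $\ell_p$.

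It then remains to read off the fibre and stability on the nonempty cases from $(q,2q)$. By the preceding lemma $K(2;q,2q)$ is a single point, stable precisely when $q=1$ and strictly semistable when $q\ge 2$. Since the index $q=d_0-d_{-1}$ is exactly the $r_q$-label, this gives statement~(3) for $\ell_p\cap r_1$ and statement~(2) for $\ell_p\cap r_q$ with $q\ge 2$, all identifications restricting to the stable loci because $S$ and $M$ do. Finally, applying $S$ once more transports every conclusion from the region $q\ge 0$ (the lines $\ell_p$ with $p\ge 0$ and $r_q$ with $q\ge 0$) to its mirror image across the diagonal, covering $\ell_{-p}$ together with $r_{-1}$ and $r_q$ for $q\le -2$, and completing statements~(2) and~(3).

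The individual computations are routine; the real work is organizational. The main obstacle I anticipate is the bookkeeping needed to match every intersection $\ell_p\cap r_q$ to the correct base case while faithfully propagating the distinction between the stable and the merely semistable locus through the chain of mutations---and, underlying this, pinning down the dichotomy \emph{``on $\ell_p$ if and only if $q\mid d_{-1}$''} that simultaneously controls nonemptiness (statement~(1)) and the termination of the descent.
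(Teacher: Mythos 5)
Your proof is correct and follows essentially the same route as the paper: there the theorem is obtained by ``collecting the last three lemmas,'' i.e.\ by combining the two computational lemmas (the diagonal and the wedge $d_0\ge 2d_{-1}$) with the Dr\'ezet isomorphisms, after observing that the transformation $M=\left(\begin{smallmatrix}2&-1\\1&0\end{smallmatrix}\right)$ and the reflection across the diagonal reduce every dimension vector to one of those base cases. Your explicit descent along $r_q$ and the divisibility criterion $q\mid d_{-1}$ simply make precise the orbit bookkeeping that the paper leaves implicit.
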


\begin{figure}
\centering
\includegraphics[height=7cm]{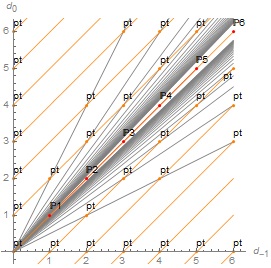}
\caption{The moduli spaces $K(2;d_{-1},d_0)$ for all values of $d_{-1},d_0\in\mathbb{N}$.}
\label{Fig: mod spa K(2,v)}
\end{figure}

This is summarized in Figure \ref{Fig: mod spa K(2,v)}. Now we can translate this into a classification of moduli of sheaves on $\mathbb{P}^1$ (depicted in Figure \ref{Fig: mod spa P1}):

\begin{corollary}
Fix $v\in K_0(\mathbb{P}^1)$.
	\begin{enumerate}
	\item Suppose that $\rk v>0$ and $\deg v$ is a multiple of $\rk v$; then $\M^\textup{ss}_{\mathbb{P}^1}(v)$ is a point, while $\M^\textup{st}_{\mathbb{P}^1}(v)$ is a point if $\rk v=1$ and empty otherwise;
	\item if $\rk v=0$ and $\deg v\geq 0$, then $\M^\textup{ss}_{\mathbb{P}^1}(v)\simeq\mathbb{P}^{\deg v}$; moreover $\M^\textup{st}_{\mathbb{P}^1}(v)\simeq\mathbb{P}^1$ for $\deg v=1$, while $\M^\textup{st}_{\mathbb{P}^1}(v)=\emptyset$ if $\deg v\geq 2$;
	\item in all the other cases $\M^\textup{ss}_{\mathbb{P}^1}(v)$ is empty.
	\end{enumerate}
\end{corollary}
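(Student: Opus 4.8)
The plan is to transport the entire statement across the isomorphism $\M^\textup{ss}_{\mathbb{P}^1}(v)\simeq K(2;d^v_{-1},d^v_0)$ (and its stable analogue) supplied by Proposition \ref{Prop: semist sh-Kk on P1}, and then to read off the answer from the preceding classification of the spaces $K(2;d_{-1},d_0)$. The key flexibility is that the sheaf-theoretic side $\M^\textup{ss}_{\mathbb{P}^1}(v)$ does not depend on $k$, whereas the dimension vector $d^v=(d^v_{-1},d^v_0)$ does, through \eqref{Eq: coord trans Gr grp P1}: $d^v_{-1}=\deg v-k\rk v$ and $d^v_0=\deg v+(1-k)\rk v$. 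So for each $v$ I am free to pick any integer $k$ with $v\in\mathpzc{R}_k$, i.e.\ with $\rk v\geq0$ and $\deg v\geq k\rk v$ (see \eqref{Eq: region Rk on P1}), and I will choose $k$ so as to land on a dimension vector whose moduli space was computed above.

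First I would dispose of the classes carrying no sheaf at all. A coherent sheaf on $\mathbb{P}^1$ has $\rk\geq0$, and a rank-$0$ sheaf is torsion, hence has $\deg\geq0$; thus if $\rk v<0$, or if $\rk v=0$ and $\deg v<0$, there is no sheaf in the class $v$ and $\M^\textup{ss}_{\mathbb{P}^1}(v)=\emptyset$, covering those subcases of (3). For $\rk v=0$ and $n:=\deg v\geq0$ (case (2)), any $k$ gives $v\in\mathpzc{R}_k$ and $(d^v_{-1},d^v_0)=(n,n)$, a point on the diagonal; Lemma \ref{Lem: isom Kron=Pd} then yields $\M^\textup{ss}_{\mathbb{P}^1}(v)\simeq K(2;n,n)\simeq\mathbb{P}^n$, with $\M^\textup{st}\simeq\mathbb{P}^1$ for $n=1$ and empty for $n\geq2$ (the degenerate $v=0$ giving $\mathbb{P}^0=\point$).

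It remains to treat $\rk v>0$, where I set $\mu:=\deg v/\rk v$. If $\mu\in\mathbb{Z}$ (case (1)), I take $k=\mu-1$, which keeps $v\in\mathpzc{R}_k$ and gives $(d^v_{-1},d^v_0)=(\rk v,2\rk v)$, lying on the edge $d_0=2d_{-1}$; the wedge computation above (the lemmas on $K(2;m,2m)$) then gives $\M^\textup{ss}\simeq\point$, with $\M^\textup{st}\simeq\point$ exactly when $\rk v=1$. If $\mu\notin\mathbb{Z}$, I take $k=\lfloor\mu\rfloor$, so that $0<d^v_{-1}=\deg v-k\rk v<\rk v$ and $d^v_0=d^v_{-1}+\rk v$; hence $2d^v_{-1}<d^v_0$, and by the emptiness criterion of Example \ref{Ex: Kron modules}(1) we get $K(2;d^v_{-1},d^v_0)=\emptyset$, completing case (3).

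I expect the only real care to be needed in the bookkeeping of case (3): one must keep separate the two distinct reasons for emptiness --- the purely numerical obstruction that no sheaf represents a class of negative rank (or a negative-degree torsion class), versus the genuinely stability-theoretic fact that a positive-rank class of non-integral slope $\mu$ admits sheaves but no semistable ones, which on the quiver side is exactly the inequality $2d^v_{-1}<d^v_0$. Apart from choosing $k$ to avoid the degenerate vector $d^v_{-1}=0$ (whence the shift to $k=\mu-1$ in case (1)), the argument is a direct translation with no further obstacle.
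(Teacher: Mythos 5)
Your argument is correct and is essentially the paper's: both transport $\M^{\textup{ss}}_{\mathbb{P}^1}(v)$ to a Kronecker moduli space $K(2;d^v_{-1},d^v_0)$ via the compatibility of the hearts $\mathcal{C}$ and $\mathcal{K}_k$ and then read the answer off the classification of the spaces $K(2;d_{-1},d_0)$. The only difference is cosmetic: the paper fixes one $k$ with $v\in\mathpzc{R}_k$ and invokes the summary theorem by transporting the lines $\ell_p$ and $r_q$ to the $(\rk,\deg)$-plane, whereas you choose $k$ adaptively ($k=\mu-1$, resp.\ $k=\lfloor\mu\rfloor$) so as to land directly on the diagonal and wedge lemmas, and you correctly supplement this by noting that classes with $\rk v<0$, or with $\rk v=0$ and $\deg v<0$, lie in no $\mathpzc{R}_k$ but contain no sheaves at all.
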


\begin{proof}
Choose $k\in\mathbb{Z}$ so that $v\in\mathpzc{R}_k$. The statements immediately follow from the theorem, by noticing that the transformation \eqref{Eq: coord trans Gr grp P1} maps the lines $\ell_p$ with $p>0$ and the lines $r_q$ respectively to the lines $(p+k)\rk v=\deg v$ and the horizontal lines $\rk v=q$ in the $(-\deg v,\rk v)$ plane.
\end{proof}

\begin{figure}
\centering
\includegraphics[height=6cm]{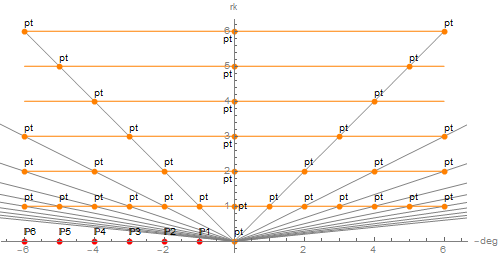}
\caption{The moduli spaces $\M^\textup{ss}_{\mathbb{P}^1}(v)$ for all values of $v\in K_0(\mathbb{P}^1)$ with $\rk v\geq 0$.}
\label{Fig: mod spa P1}
\end{figure}

\begin{remark}
The statements of the corollary can be easily explained in sheaf-theoretic terms via Birkhoff-Grothendieck Theorem:
	\begin{enumerate}
	\item A semistable sheaf of rank $r>0$ must be a direct sum of $r$ copies of the same line bundle $\mathcal{O}_{\mathbb{P}^1}(\ell)$, so it has degree $r\ell$; it is stable if and only if $r=1$.
	\item The polystable sheaves of rank 0 and degree $d$ are direct sums $\mathcal{O}_{x_1}\oplus\cdots\oplus\mathcal{O}_{x_d}$ of skyscraper sheaves and as such they are in 1-1 correspondence with points of the $d$th symmetric product $\mathbb{P}^d$ of $\mathbb{P}^1$; in particular, they can be stable if and only if $d=1$. The structure sheaves of fat points are also semistable, but they degenerate to direct sums of skyscraper sheaves on the reduced points of their support.
	\end{enumerate}
\end{remark}


%
%
\section{Gieseker stability and quiver stability on surfaces}\label{Gies-quiv stab surf}

In this section we discuss how to relate Gieseker-semistable sheaves on a surface $X$ whose bounded derived category satisfies certain assumptions to semistable representations of a quiver associated to $X$. The idea is analogous to what we saw in the previous section, but the situation becomes now more involved and requires a different analysis.

Let $X$ be a smooth irreducible projective complex surface with an ample divisor $A$.

First of all we assume that $X$ has a strong full exceptional collection ${}^\vee\!\mathfrak{E}=({}^\vee\!E_n,...,{}^\vee\!E_0)$ of vector bundles, so that by Theorem \ref{Thm: Baer-Bondal} we get an equivalence (for convenience we include now a shift)
\begin{equation}\label{Eq: equiv Psi for surf}
\Psi:=\Phi_{{}^\vee\!\mathfrak{E}}[1]:D^b(X)\longrightarrow D^b(Q;J)\,.
\end{equation}
Recall that $\Psi$ maps a complex $\mathcal{F}^\bullet$ in $D^b(X)$ to a complex of representations of $Q$ given, at a vertex $i\in\{0,...,n\}$ of $Q$, by the graded vector space
\begin{equation}\label{Eq: expl form equiv Psi}
R\Hom({}^\vee\!E_i,\mathcal{F}^\bullet)[1]\,.
\end{equation}
$\Psi$ induces in particular an isomorphism $\psi:K_0(X)\to K_0(Q;J)$, and a t-structure on $D^b(X)$ whose heart is denoted by $\mathcal{K}:=\Psi^{-1}(\Rep^\textup{fd}_\mathbb{C}(Q;J))$ and equals the extension closure of the objects $E_i[n-i-1]$, where $\mathfrak{E}=(E_0,...,E_n)$ is the right dual collection to ${}^\vee\!\mathfrak{E}$ (see Remark \ref{Rmk: heart ind by Phi}, but recall that now $\mathcal{K}$ is also shifted by one place to the right).


The polynomial-valued alternating form $\sigma_{\G}=t\sigma_{\M}+\sigma_\chi$ on $K_0(X)$, defined in Eq.\ \eqref{Eq: sigmaG}, reproduces Gieseker stability when regarded as a stability structure on the standard heart $\mathcal{C}=\Coh_{\mathcal{O}_X}$ (Lemma \ref{Lem: Gies,slope-stab and sigma}). On the other hand, if we see $\sigma_{\G}$ as a stability structure on $\mathcal{K}$, then an object $K_V\in\mathcal{K}$ in a class $v\in K_0(X)$ and corresponding via $\Psi$ to a representation $V\in\Rep^\textup{fd}_\mathbb{C}(Q;J)$ is $\sigma_{\G}$-(semi)stable if and only if $V$ is $\theta_{\G,v}$-(semi)stable in the sense of Def.\ \ref{Defn: theta-stab qui rep}, where the polynomial array
\begin{equation*}
\theta_{\G,v}=t\theta_{\M,v}+\theta_{\chi,v}\in\mathbb{Z}[t]^I
\end{equation*}
is defined by (the dot denotes the standard scalar product in $\mathbb{Z}^I$)
\begin{equation}\label{Eq: thetaM,thetachi}
\nu_{\M,v}(w)=\sigma_{\M}(v,w)=\theta_{\M,v}\cdot\dimvec\psi(w)\,,\quad
\nu_{\chi,v}(w)=\sigma_\chi(v,w)=\theta_{\chi,v}\cdot\dimvec\psi(w)\,.
\end{equation}

\begin{figure}
\centering
\includegraphics[height=3cm]{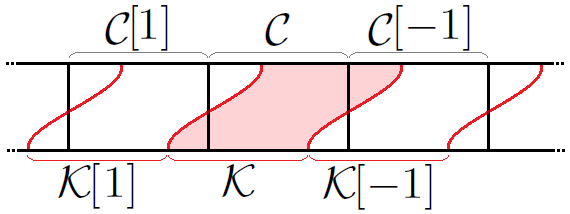}
\caption{The hearts $\mathcal{C},\mathcal{K}\subset D^b(X)$}
\label{Fig: hearts C,K surf}
\end{figure}

Unlike what happened for $\mathbb{P}^1$, now $\mathcal{K}$ is not obtained as a tilt of the standard heart $\mathcal{C}$ with respect to a stability condition (it never satisfies Eq.\ \eqref{Eq: rel tilt t-str} because it intersects three shifts of $\mathcal{C}$, see Figure \ref{Fig: hearts C,K surf}). So there seems to be no reason to expect a priori any relation between stability on one heart and on the other. Nevertheless, we will see that under certain hypotheses this kind of compatibility exists; more precisely, we discuss when the hearts $\mathcal{C},\mathcal{K}$ are $(\sigma_{\G},v)$-compatible in the sense of Def.\ \ref{Defn: (sigma,v)-compat hearts}. Doing this requires the following extra hypotheses on the collections $\mathfrak{E},{}^\vee\!\mathfrak{E}$, which will be always assumed in this section:

\begin{definition}\label{Defn: mod-fr exc coll}
The strong exceptional sequence $\mathfrak{E}$ will be called \emph{monad-friendly} (with respect to the ample divisor $A$) if the following assumptions are satisfied:
	\begin{itemize}\label{Assumptions A1-A2}
	\item[(A1)] the objects ${}^\vee\!E_i$ are locally free sheaves which are Gieseker-semistable with respect to $A$; 
	\item[(A2)] every element of $\mathcal{K}$ is isomorphic to a complex $K_V$ of locally free sheaves concentrated in degrees $-1,0,1$.
	\end{itemize}
By analogy with the situation discussed in the Introduction, we will call such complexes $K_V$ \emph{Kronecker complexes}.
\end{definition}
	
\begin{remark}
In the specific cases that will be examined in the next sections, assumption (A2) will follow from the fact that the objects $E_i[n-i-1]$ generating $\mathcal{K}$ turn out to group into three \emph{blocks}, where the objects in each block are orthogonal to each other, and they are all isomorphic to vector bundles $\tilde{E}_i$ shifted to degree $-1,0$ or $1$, depending on the block, and with vanishing positive Ext spaces between them. Because of this, the complex $K_V\in\mathcal{K}$ corresponding to some representation $V$ of $(Q,J)$ consists, in each degree $\ell=-1,0,1$, of a direct sum of vector bundles of the form $V_i\otimes\tilde{E}_i$. This means in particular that we can write down explicitly the cohomological functors $H_\mathcal{K}^\ell$ of the non-standard t-structure as functors mapping a complex $\mathcal{F}^\bullet\in D^b(X)$ to a complex $K_V\in\mathcal{K}$ with
\begin{equation}\label{Eq: cohom funct HK expl}
V_i=R^{\ell+1}\Hom({}^\vee\!E_i,\mathcal{F}^\bullet)\,.
\end{equation}
\end{remark}

\subsection{Condition (C1)}\label{Cond C1}
In this subsection we will study condition (C1) of \S\ref{Compat hearts stab}. First of all, we want to show that a semistable sheaf in a class $v$ also belongs to the heart $\mathcal{K}$, that is, it is isomorphic to the middle cohomology of a certain monad $K_V$ (recall that a \emph{monad} is a complex with zero cohomology in degrees $\ell\neq 0$). This amounts to checking the vanishing on $\mathcal{F}$ of the cohomological functors $H_\mathcal{K}^\ell$ for $\ell\neq 0$, which in turn reduces, by Eq.\ \eqref{Eq: expl form equiv Psi}, to verifying the vanishing of some $\Ext$ spaces. For this to work we need to choose $v$ appropriately: we denote by $\mathpzc{R}_A\subset\mathpzc{R}^\textup{G}_A\subset K_0(X)$ the regions
\begin{equation}\label{Eq: sets RA,RGA}
\begin{array}{rcl}
\mathpzc{R}_A&:=&\{v\in K_0(X)\ |\ \rk v>0\,,\ \max_i\mu_A({}^\vee\!E_i\otimes\omega_X)<\mu_A(v)<\min_i\mu_A({}^\vee\!E_i)\}\,,\\
\mathpzc{R}^\textup{G}_A&:=&\{v\in K_0(X)\ |\ \rk v>0\,,\ \max^\textup{G}_iP_{{}^\vee\!E_i\otimes\omega_X,A}\prG P_{v,A}\prG \min^\textup{G}_iP_{{}^\vee\!E_i,A}\}\\
&=&\{v\in K_0(X)\ |\ \rk v>0\,,\ \max_ip_{{}^\vee\!E_i\otimes\omega_X,A}<p_{v,A}<\min_ip_{{}^\vee\!E_i,A}\}
\end{array}
\end{equation}
(recall that $p_{v,A}$ denotes the \emph{reduced} Hilbert polynomial). For these regions to be nonempty, the exceptional sheaves ${}^\vee\!E_i$ must have their slopes concentrated in a sufficiently narrow region, and the anticanonical bundle $\omega_X^\vee$ must be sufficiently positive (Figure \ref{Fig: slop exc sh}).

\begin{figure}
\centering
\includegraphics[height=6cm]{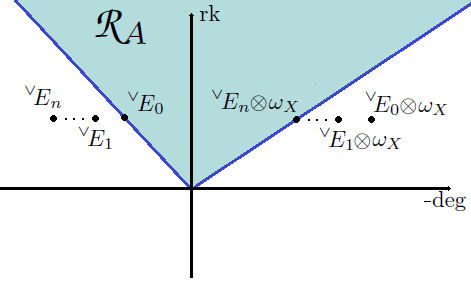}
\caption{The region $\mathpzc{R}_A$.}
\label{Fig: slop exc sh}
\end{figure}

\begin{remark}
We could also twist the collection $\mathfrak{E}$ by a line bundle, to shift the regions $\mathpzc{R}_A,\mathpzc{R}^\textup{G}_A$ accordingly: if these are wide enough and the line bundle has small but nonzero degree, then with such twists we can cover the whole region $\rk v>0$. When this is the case, like in the examples that we will consider, we are thus free to start with any class $v\in K_0(X)$ with positive rank, provided that we choose $\mathfrak{E}$ appropriately.
\end{remark}

\begin{lemma}\label{Lem: sem shvs are in K}
Suppose that $v\in\mathpzc{R}^\textup{G}_A$ (resp.\ $v\in\mathpzc{R}_A$). Then any Gieseker-semistable (resp.\ slope-semistable) sheaf $\mathcal{F}\in v$ belongs to the heart $\mathcal{K}$.
\end{lemma}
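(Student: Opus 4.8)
The plan is to translate membership in $\mathcal{K}$ into a pair of $\Ext$-vanishing conditions and then to deduce these from semistability together with the numerical inequalities that define the region $\mathpzc{R}^\textup{G}_A$ (resp.\ $\mathpzc{R}_A$).

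First I would recall that, $\mathcal{K}$ being the heart of a bounded t-structure, $\mathcal{F}$ lies in $\mathcal{K}$ precisely when $H_\mathcal{K}^\ell(\mathcal{F})=0$ for every $\ell\neq0$. Using the explicit form of $\Psi=\Phi_{{}^\vee\!\mathfrak{E}}[1]$, the complex $\Psi(\mathcal{F})$ has at the vertex $i$ the object $R\Hom({}^\vee\!E_i,\mathcal{F})[1]$, so the representation $H^\ell(\Psi(\mathcal{F}))$ underlying $H_\mathcal{K}^\ell(\mathcal{F})$ carries the vector space $\Ext^{\ell+1}({}^\vee\!E_i,\mathcal{F})$ at vertex $i$. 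Since $\mathcal{F}$ is a sheaf and each ${}^\vee\!E_i$ is locally free on the surface $X$, these groups vanish outside degrees $0,1,2$, so $H_\mathcal{K}^\ell(\mathcal{F})$ can be nonzero only for $\ell\in\{-1,0,1\}$. Thus the claim reduces to the two vanishings
\[
\Hom({}^\vee\!E_i,\mathcal{F})=0\quad\text{and}\quad\Ext^2({}^\vee\!E_i,\mathcal{F})=0\qquad\text{for all }i,
\]
which express $H_\mathcal{K}^{-1}(\mathcal{F})=0$ and $H_\mathcal{K}^{1}(\mathcal{F})=0$ respectively.

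Next, for $v\in\mathpzc{R}^\textup{G}_A$ I would rewrite the second group by Serre duality on $X$ as $\Ext^2({}^\vee\!E_i,\mathcal{F})\cong\Hom(\mathcal{F},{}^\vee\!E_i\otimes\omega_X)^\vee$, turning both conditions into statements about $\Hom$'s between Gieseker-semistable sheaves: $\mathcal{F}$ is semistable by hypothesis, each ${}^\vee\!E_i$ is semistable by assumption (A1), and ${}^\vee\!E_i\otimes\omega_X$ is semistable because twisting by a line bundle preserves Gieseker-semistability. The inequalities $p_{{}^\vee\!E_i\otimes\omega_X,A}<p_{v,A}<p_{{}^\vee\!E_i,A}$ defining the region say exactly that $P_{{}^\vee\!E_i,A}\succ_{\textup{G}}P_{\mathcal{F},A}$ and $P_{\mathcal{F},A}\succ_{\textup{G}}P_{{}^\vee\!E_i\otimes\omega_X,A}$, so the vanishing principle recalled in \S\ref{Stability structures}---two semistable objects with strictly $\succ_{\textup{G}}$-ordered Hilbert polynomials admit no nonzero morphism from the larger to the smaller---yields $\Hom({}^\vee\!E_i,\mathcal{F})=0$ and $\Hom(\mathcal{F},{}^\vee\!E_i\otimes\omega_X)=0$, as needed.

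The slope case ($v\in\mathpzc{R}_A$) would be handled identically, with reduced Hilbert polynomials replaced by slopes: each ${}^\vee\!E_i$ is slope-semistable since Gieseker-semistability implies slope-semistability, the sheaves involved are torsion-free, and the inequalities $\mu_A({}^\vee\!E_i\otimes\omega_X)<\mu_A(v)<\mu_A({}^\vee\!E_i)$ force the same two $\Hom$-groups to vanish via the analogous property of slope-semistable sheaves. I expect the only genuinely delicate point to be the cohomological bookkeeping of the second paragraph---pinning down that the vertex-$i$ space of $H_\mathcal{K}^\ell(\mathcal{F})$ is $\Ext^{\ell+1}({}^\vee\!E_i,\mathcal{F})$, and hence that lying in $\mathcal{K}$ is equivalent to the two $\Ext$-vanishings. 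Once this dictionary is in place, the remainder is a routine combination of Serre duality and the ordering properties of $\succ_{\textup{G}}$.
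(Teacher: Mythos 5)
Your proposal is correct and follows essentially the same route as the paper's proof: reduce membership in $\mathcal{K}$ to the vanishing of $\Hom({}^\vee\!E_i,\mathcal{F})$ and $\Ext^2({}^\vee\!E_i,\mathcal{F})$ via the explicit form of $\Psi$, then obtain both vanishings from assumption (A1), Serre duality, and the $\prqG$-inequalities defining $\mathpzc{R}^\textup{G}_A$ (resp.\ the slope inequalities defining $\mathpzc{R}_A$). The extra bookkeeping you supply on why $H^\ell_\mathcal{K}(\mathcal{F})$ carries $\Ext^{\ell+1}({}^\vee\!E_i,\mathcal{F})$ at vertex $i$ is exactly what the paper delegates to Eq.\ \eqref{Eq: expl form equiv Psi}.
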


\begin{proof}
Since each ${}^\vee\!E_i$ is Gieseker-semistable by assumption $(A1)$, we have $\Hom({}^\vee\!E_i,\mathcal{F})=0$ because of the i\-ne\-qua\-li\-ty $p_{\mathcal{F},A}<p_{{}^\vee\!E_i,A}$; on the other hand, the inequality $p_{{}^\vee\!E_i\otimes\omega_X,A}<p_{\mathcal{F},A}$ and Serre duality give $\Ext^2({}^\vee\!E_i,\mathcal{F})=\Hom(\mathcal{F},{}^\vee\!E_i\otimes\omega_X)=0$. So $H^{-1}_\mathcal{K}(\mathcal{F})=H^1_\mathcal{K}(\mathcal{F})=0$ by Eq.\ \eqref{Eq: expl form equiv Psi}. For the case of slope semistability the proof is the same.
\end{proof}

Now we deal with the same problem with the two hearts $\mathcal{C},\mathcal{K}$ exchanged: we want a $\sigma_{\G}$-semistable Kronecker complex $K_V\in\mathcal{K}$ of class $v$ to be a monad, that is to belong to $\mathcal{C}$. To obtain this, we observe that when $K_V$ is not a monad, we can construct a destabilizing subcomplex or quotient complex using the following idea from \cite[\S2]{FiGiIoKu16Int}: consider the skyscraper sheaf $\mathcal{O}_x$ over some point $x\in X$. Clearly $H_\mathcal{K}^\ell(\mathcal{O}_x)=0$ for all $\ell\neq -1$, which means that there is a Kronecker complex $K_x\in\mathcal{K}$ which has cohomology $\mathcal{O}_x$ in degree 1, and zero elsewhere, that is to say that $K_x\simeq\mathcal{O}_x[-1]$ in $D^b(X)$. Observe that this complex is self-dual: we have
\begin{equation*}
K_x^\vee\simeq\mathcal{O}_x^\vee[1]\simeq\mathcal{O}_x[-1]\simeq K_x
\end{equation*}
in $D^b(X)$, where $\mathcal{O}_x^\vee\simeq\mathcal{O}_x[-2]$ is the derived dual of $\mathcal{O}_x$.

\begin{proposition}\label{Prop: morphs to Vx}
If the second map in a Kronecker complex $K_V$ is not surjective at some point $x\in X$, then there is a nonzero morphism $K_V\to K_x$. If the first map in $K_V$ is not injective at $x$, then there is a nonzero morphism $K_x\to K_V$.
\end{proposition}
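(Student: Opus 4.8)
The plan is to reduce everything to a computation of $\Hom$-groups in $D^b(X)$. Since $K_V$ and $K_x$ both lie in the heart $\mathcal{K}$, a morphism between them in $\mathcal{K}$ is the same as a morphism in $D^b(X)$, and we may use the identification $K_x\simeq\mathcal{O}_x[-1]$. For the first claim I therefore want to show that
\begin{equation*}
\Hom_{D^b(X)}(K_V,K_x)=\Hom_{D^b(X)}(K_V,\mathcal{O}_x[-1])=\Ext^{-1}(K_V,\mathcal{O}_x)
\end{equation*}
is nonzero whenever the second map $d^0\colon K_V^0\to K_V^1$ fails to be surjective at $x$; any nonzero element is then the desired morphism $K_V\to K_x$.

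The key step is an $R\Hom$ computation. Since $K_V$ is a bounded complex of locally free sheaves it is perfect, so $R\mathcal{H}om(K_V,\mathcal{O}_x)\simeq K_V^\vee\otimes^L\mathcal{O}_x=K_V^\vee\otimes\mathcal{O}_x$, the complex obtained by dualizing $K_V$ termwise and restricting to the fiber at $x$. This is a complex of skyscraper sheaves supported at $x$, so $R\Gamma$ reduces it to the underlying complex of fibers
\begin{equation*}
(K_V^1)_x^\vee\xrightarrow{(d^0_x)^\vee}(K_V^0)_x^\vee\xrightarrow{(d^{-1}_x)^\vee}(K_V^{-1})_x^\vee
\end{equation*}
placed in degrees $-1,0,1$, where $(-)_x$ denotes the fiber at $x$. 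Reading off degree $-1$ gives
\begin{equation*}
\Ext^{-1}(K_V,\mathcal{O}_x)=\ker\bigl((d^0_x)^\vee\bigr)=\bigl(\coker d^0_x\bigr)^\vee,
\end{equation*}
which is nonzero precisely when $d^0$ is not surjective on the fiber at $x$. This settles the first claim.

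For the second claim I would reduce to the first by dualizing. Since $X$ is smooth, $D:=R\mathcal{H}om(-,\mathcal{O}_X)$ is a contravariant autoequivalence of $D^b(X)$, so $\Hom_{D^b(X)}(K_x,K_V)\cong\Hom_{D^b(X)}(K_V^\vee,K_x^\vee)$, and $K_x^\vee\simeq K_x$ by the self-duality already noted. The dual $K_V^\vee$ is again a complex of locally free sheaves in degrees $-1,0,1$, and its second map is $(d^{-1})^\vee$, which fails to be surjective at $x$ exactly when $d^{-1}$ fails to be injective at $x$. Applying the first claim to $K_V^\vee$ then produces a nonzero morphism $K_V^\vee\to K_x$, hence a nonzero morphism $K_x\to K_V$.

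The only genuine subtlety to verify carefully is the $R\Hom$ reduction: local-freeness of the terms $K_V^j$ makes $K_V$ perfect and annihilates the higher local $\mathcal{E}xt$-sheaves, while the point-support of $\mathcal{O}_x$ annihilates higher global cohomology, so that the two spectral sequences computing $\Ext^\bullet(K_V,\mathcal{O}_x)$ collapse onto the single complex of fibers above. Once this collapse is justified, identifying $\ker(d^0_x)^\vee$ with $(\coker d^0_x)^\vee$ and tracking the shift so that the relevant group lands in degree $-1$ are routine, and the signs in the differentials are irrelevant to the (non)vanishing.
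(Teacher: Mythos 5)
Your argument is correct. Where it differs from the paper is only in the first half: the paper's proof is purely constructive and shorter --- since $d^0$ is not surjective on the fiber at $x$, one simply picks a surjection $c\colon K_V^1\to\mathcal{O}_x$ with $c\circ d^0=0$ and observes that $(0,0,c)$ is a cochain map $K_V\to\mathcal{O}_x[-1]$, hence a nonzero morphism $K_V\to K_x$ in $\mathcal{K}$ --- whereas you compute the entire space $\Hom_{D^b(X)}(K_V,K_x)\cong(\coker d^0_x)^\vee$ via $R\mathcal{H}om(K_V,\mathcal{O}_x)\simeq K_V^\vee\otimes\mathcal{O}_x$ and perfectness of $K_V$. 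Your computation costs a little more machinery but buys the exact dimension of the $\Hom$-space and, incidentally, confirms that the paper's explicit cochain map (which is visibly non-nullhomotopic) really is nonzero in the derived category rather than merely in the homotopy category, a point the paper passes over. The second halves are identical: both dualize, use $K_x^\vee\simeq K_x$, and apply the first argument to $K_V^\vee$; note (as you implicitly do) that this is legitimate because the first argument only uses that one has a bounded complex of locally free sheaves in degrees $-1,0,1$, not that it lies in the heart $\mathcal{K}$.
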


\begin{proof}
Suppose that the second map $b:K_V^0\to K_V^1$ in $K_V$ is not surjective at some $x\in X$: we have then a surjective morphism $c:K_V^1\to\mathcal{O}_x$ such that $c\circ b=0$, and this gives a cochain map $K_V\to\mathcal{O}_x[-1]$, and thus a nonzero morphism $K_V\to K_x$ in $\mathcal{K}$.\\
Now suppose that the first map is not injective at $x$: then we can apply the previous argument to the complex $K_V^\vee$ to get a nonzero map $K_V^\vee\to K_x$, hence a nonzero $K_x\simeq K_x^\vee\to K_V$.
\end{proof}

In following two lemmas we prove that for any Kronecker complex $K_V\in\mathcal{K}$ of class $v$ and any $\sigma_{\G}$-maximal subobject $K_W\subset K_V$ in $\mathcal{K}$ (see Def.\ \ref{Defn: sigma-max sub}), we have some vanishings in the cohomologies of $K_W$ and $K_V/K_W$, provided that the class $v\in K_0(X)$ chosen satisfies some constraints imposed by the complex $K_x$. Notice that when $K_V$ is $\sigma_{\G}$-semistable, then it is a $\sigma_{\G}$-maximal subobject of itself, and thus $K_V$ will turn out in Cor.\ \ref{Cor: ss KV monad} to be a monad.

\begin{lemma}\label{Lem: max KW has H1=0}
Take $K_V\in\mathcal{K}$ of class $v$. Suppose that for any $x\in X$ and any nonzero subobject $S\subset K_x$ in $\mathcal{K}$ we have $\nu_{\G,v}(S):=\sigma_{\G}(v,S)>0$. Then any $\sigma_{\G}$-maximal subobject $K_W\subset K_V$ satisfies $H_\mathcal{C}^1(K_W)=0$.
\end{lemma}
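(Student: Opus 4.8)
The plan is to argue by contradiction, using Proposition \ref{Prop: morphs to Vx} to convert a failure of surjectivity into a destabilizing subobject of $K_V$. Suppose $K_W\subset K_V$ is $\sigma_{\G}$-maximal but $H_\mathcal{C}^1(K_W)\neq 0$. Representing $K_W$ as a Kronecker complex $K_W^{-1}\to K_W^0\xrightarrow{b}K_W^1$ (legitimate by assumption (A2)), the degree-one cohomology sheaf is $H_\mathcal{C}^1(K_W)=\coker(b)$, so $b$ is not surjective; since $\coker(b)$ is a nonzero coherent sheaf its support is nonempty, and I would choose a point $x\in X$ in that support, i.e.\ a point at which $b$ fails to be surjective.

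By Proposition \ref{Prop: morphs to Vx} this yields a nonzero morphism $\phi\colon K_W\to K_x$ in $\mathcal{K}$. I would then set $S:=\im(\phi)$, a nonzero subobject of $K_x$ in the abelian category $\mathcal{K}$, so that the hypothesis of the lemma gives $\nu_{\G,v}(S)=\sigma_{\G}(v,S)>0$. Writing $K_{W'}:=\ker(\phi)$, which is a subobject of $K_W$ and hence of $K_V$, the short exact sequence $0\to K_{W'}\to K_W\to S\to 0$ in $\mathcal{K}$ gives $[K_W]=[K_{W'}]+[S]$ in $K_0(\mathcal{D})$, and therefore $\nu_{\G,v}([K_{W'}])=\nu_{\G,v}([K_W])-\nu_{\G,v}(S)<\nu_{\G,v}([K_W])$.

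To extract the contradiction I would first unwind the maximality condition. Since $\sigma_{\G}$ is alternating, $\sigma_{\G}(\cdot,K_V)=-\nu_{\G,v}(\cdot)$, so Definition \ref{Defn: sigma-max sub} says precisely that $\nu_{\G,v}([K_W])$ is \emph{minimal} among $\nu_{\G,v}$ of all subobjects of $K_V$; comparing in particular with the zero subobject forces $\nu_{\G,v}([K_W])\leq 0$. If $K_{W'}\neq 0$, then $K_{W'}$ is a nonzero subobject of $K_V$ with $\nu_{\G,v}([K_{W'}])<\nu_{\G,v}([K_W])$, contradicting this minimality. If instead $K_{W'}=0$, then $[K_W]=[S]$ and $\nu_{\G,v}([K_W])=\nu_{\G,v}(S)>0$, contradicting $\nu_{\G,v}([K_W])\leq 0$. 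In either case we reach a contradiction, so $H_\mathcal{C}^1(K_W)=0$.

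The argument is short because the genuine geometric content is already packaged in Proposition \ref{Prop: morphs to Vx}; I expect the only delicate points to be bookkeeping ones. The main obstacle will be tracking the sign correctly when translating $\sigma_{\G}$-maximality into a \emph{minimization} of $\nu_{\G,v}$ via the antisymmetry of $\sigma_{\G}$, together with separating out the boundary case $\ker(\phi)=0$, where the contradiction comes not from comparing against $K_{W'}$ but from the bound $\nu_{\G,v}([K_W])\leq 0$ obtained by comparing against the zero subobject.
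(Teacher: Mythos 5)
Your argument is correct and follows essentially the same route as the paper's proof: Proposition \ref{Prop: morphs to Vx} produces the nonzero map $K_W\to K_x$, additivity of $\nu_{\G,v}$ on the kernel--image decomposition gives the strict inequality, and the two cases $\ker\phi=0$ and $\ker\phi\neq 0$ are handled exactly as in the paper. Your extra care in spelling out that $\sigma_{\G}$-maximality means minimizing $\nu_{\G,v}$ and that comparison with the zero (or total) subobject forces $\nu_{\G,v}(K_W)\leq 0$ is a useful clarification of a step the paper leaves implicit, but it is not a different argument.
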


\begin{proof}
If $H^1_\mathcal{C}(K_W)\neq 0$, which means that the second map in $K_W$ is not surjective at some point $x\in X$, then there is a nonzero morphism $f:K_W\to K_x$ by Prop.\ \ref{Prop: morphs to Vx}. So we have $\nu_{\G,v}(K_W)=\nu_{\G,v}(\ker f)+\nu_{\G,v}(\im f)$ and, by hypothesis, $\nu_{\G,v}(\im f)>0$. If $\ker f=0$ then $\nu_{\G,v}(K_W)>0$, while if $\ker f\neq0$ then $\nu_{\G,v}(\ker f)=\nu_{\G,v}(K_W)-\nu_{\G,v}(\im f)<\nu_{\G,v}(K_W)$; in both cases, $\sigma_{\G}(K_W,K_V)=-\nu_{\G,v}(K_W)$ is not maximal.
\end{proof}

\begin{lemma}\label{Lem: min KU has H-1=0}
Take $K_V\in\mathcal{K}$ of class $v$. Suppose that, for any $x\in X$, $K_x$ is $\nu_{\M,v}$-semistable and every quotient $Q$ of $K_x$ with $\nu_{\M,v}(Q)=0$ satisfies $H^{-1}_\mathcal{C}(Q)=0$. Then for any $\sigma_{\M}$-maximal subobject $K_W\subset K_V$ we have $H_\mathcal{C}^{-1}(K_V/K_W)=0$.
\end{lemma}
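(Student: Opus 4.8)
The plan is to assume $H^{-1}_\mathcal{C}(K_V/K_W)\neq0$ and to peel off the offending part step by step using the self-dual complexes $K_x$, in a descending induction. This mirrors the strategy of Lemma \ref{Lem: max KW has H1=0}, but since $\nu_{\M,v}$-semistability of $K_x$ only yields a non-strict inequality, a single step will not contradict maximality and an iteration is forced. Write $K_U:=K_V/K_W\in\mathcal{K}$ and, using assumption (A2), represent it by a complex of locally free sheaves $K_U^{-1}\xrightarrow{a}K_U^0\to K_U^1$, so that $H^{-1}_\mathcal{C}(K_U)=\ker a$. First I would observe that $\ker a$, being a subsheaf of the locally free sheaf $K_U^{-1}$, is torsion-free, so $\ker a\neq0$ forces $\rk\ker a\geq1$; the generic rank of $a$ is then $\rk K_U^{-1}-\rk\ker a<\rk K_U^{-1}$, and since the fibrewise rank can only drop on closed subsets, $a$ fails to be fibrewise injective at \emph{every} point $x\in X$. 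By Proposition \ref{Prop: morphs to Vx} this produces, for each such $x$, a nonzero morphism $K_x\to K_U$.

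Next I would translate the $\sigma_\M$-maximality of $K_W$ into a statement about subobjects of $K_U$. Since $\nu_{\M,v}(K_V)=\sigma_\M(v,v)=0$ and $\sigma_\M(K_W,K_V)=-\nu_{\M,v}(K_W)$, maximality means $\nu_{\M,v}(S)\geq\nu_{\M,v}(K_W)$ for every subobject $S\subset K_V$; applying this to preimages of subobjects of $K_U$ shows that every subobject $T\subset K_U$ satisfies $\nu_{\M,v}(T)\geq0$. Now pick a nonzero $g\colon K_x\to K_U$ and set $N:=\im g$. As $N$ is a quotient of $K_x$, which is $\nu_{\M,v}$-semistable with $\nu_{\M,v}(K_x)=\sigma_\M(v,[\mathcal{O}_x])=0$, we get $\nu_{\M,v}(N)\leq0$; combined with $N\subset K_U$, which gives $\nu_{\M,v}(N)\geq0$, this forces $\nu_{\M,v}(N)=0$. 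The hypothesis on quotients of $K_x$ with vanishing $\nu_{\M,v}$ then yields $H^{-1}_\mathcal{C}(N)=0$.

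To finish, I would run the descent. The short exact sequence $0\to N\to K_U\to K_U/N\to0$ in $\mathcal{K}$ underlies a distinguished triangle, and the long exact sequence of standard cohomology together with $H^{-1}_\mathcal{C}(N)=0$ produces an injection $H^{-1}_\mathcal{C}(K_U)\hookrightarrow H^{-1}_\mathcal{C}(K_U/N)$. The quotient $K_U/N$ again has all subobjects of nonnegative $\nu_{\M,v}$ (this property is inherited since $\nu_{\M,v}(N)=0$), so the whole step repeats with $K_U/N$ in place of $K_U$. Because $N\neq0$ the dimension vector $\dimvec\psi([\,\cdot\,])$ strictly decreases at each stage, so after finitely many steps we reach an object admitting no nonzero morphism from any $K_x$, hence with vanishing $H^{-1}_\mathcal{C}$ by the first paragraph. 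Composing the chain of injections back up then forces $H^{-1}_\mathcal{C}(K_U)=0$, contrary to the assumption.

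The main obstacle is precisely the non-strictness of the inequality coming from semistability: one obtains $\nu_{\M,v}(N)=0$ rather than a sign contradiction, so the argument cannot terminate in one step as in Lemma \ref{Lem: max KW has H1=0} and must instead descend, which requires verifying that the auxiliary property ``every subobject has $\nu_{\M,v}\geq0$'' survives each quotient and that the cohomology genuinely injects at every stage. A secondary point to handle carefully is the passage from ``$\ker a\neq0$ as a sheaf'' to ``$a$ is fibrewise non-injective at every point'', which is what licenses the repeated application of Proposition \ref{Prop: morphs to Vx}.
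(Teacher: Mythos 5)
Your proposal is correct and follows essentially the same route as the paper's proof: you produce a nonzero morphism $K_x\to K_U$ from the failure of fibrewise injectivity, use $\sigma_{\M}$-maximality (phrased via subobjects of $K_U$ rather than quotients of $K_V$, which is equivalent) to force $\nu_{\M,v}(\im g)=0$, invoke the hypothesis to kill $H^{-1}_\mathcal{C}(\im g)$, and iterate the quotienting until termination. The only differences are cosmetic — the paper terminates by finite length of $\mathcal{K}$ where you use strict decrease of the dimension vector, and you spell out the fibrewise-injectivity point in more detail.
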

Notice that if $K_W\subset K_V$ is $\sigma_{\G}$-maximal then it is also $\sigma_{\M}$-maximal. It is also worth mentioning here that $\nu_{\M,v}(K_x)=0$ and $\nu_{\G,v}(K_x)=\nu_{\chi,v}(K_x)=\rk v$.

\begin{proof}
Let $K_W\subset K_V$ be a $\sigma_{\M}$-maximal subobject, which means that the quotient $K_U:=K_V/K_W$ maximizes $\nu_{\M,v}=\sigma_{\M}(v,\cdot)$. We have to prove that the first map in $K_U$ is injective. This is clearly true if such a map is injective at every point of $X$; thus suppose now that it is not injective at some point $x\in X$, so that we have a nonzero morphism $g:K_x\to K_U$ by Prop.\ \ref{Prop: morphs to Vx}. Now $\nu_{\M,v}(K_U)=\nu_{\M,v}(K_x/\ker g)+\nu_{\M,v}(\coker g)$, and $\nu_{\M,v}(K_x/\ker g)\leq 0$ by hypothesis.

If $\coker g=0$, then $0\leq\nu_{\M,v}(K_U)=\nu_{\M,v}(K_x/\ker g)\leq 0$, implying $H_\mathcal{C}^{-1}(K_U)=0$. On the other hand, if the cokernel
\begin{equation*}
K_U\overset{c^{(0)}}{\to}K_U^{(1)}:=\coker g
\end{equation*}
is nonzero, then looking at the exact sequence
\begin{equation*}
0\to\ker g\to K_x\overset{g}{\to}K_U\overset{c^{(0)}}{\to}K_U^{(1)}\to 0
\end{equation*}
we see that $\nu_{\M,v}(K_x/\ker g)=0$ (otherwise $\nu_{\M,v}(K_U)<\nu_{\M,v}(\coker g)$, contradicting maximality of $\nu_{\M,v}(K_U)$), so that $\nu_{\M,v}(K_U^{(1)})=\nu_{\M,v}(K_U)$. Hence $K_U^{(1)}$ is also a quotient of $K_V$ of maximal $\nu_{\M,v}$, and $H^{-1}_\mathcal{C}(\ker c^{(0)})=H^{-1}_\mathcal{C}(K_x/\ker g)=0$.

By applying the same argument to $K_U^{(1)}$ we see that either we can immediately conclude that $H_\mathcal{C}^{-1}(K_U^{(1)})=0$, in which case we stop here, or we can construct a further quotient
\begin{equation*}
K_U^{(1)}\overset{c^{(1)}}{\to}K_U^{(2)}
\end{equation*}
with maximal $\nu_{\M,v}$ and such that $H^{-1}_\mathcal{C}(\ker c^{(1)})=0$. After finitely many steps ($\mathcal{K}$ has finite lenght) we end up with a chain
\begin{equation*}
K_U=K_U^{(0)}\overset{c^{(0)}}{\to}K_U^{(1)}\overset{c^{(1)}}{\to}K_U^{(2)}\overset{c^{(2)}}{\to}\cdots\overset{c^{(\ell-1)}}{\to} K_U^{(\ell)}
\end{equation*}
of surjections with $H^{-1}_\mathcal{C}(\ker c^{(i)})=0$ for all $i\geq 0$ and $H^{-1}_\mathcal{C}(K_U^{(\ell)})=0$. This implies that $H_\mathcal{C}^{-1}(K_U)=0$.
\end{proof}

\begin{remark}\label{Rmk: srt cond hyp lemmas}
Notice that the hypotheses of Lemmas \ref{Lem: max KW has H1=0} and \ref{Lem: min KU has H-1=0} are verified under the stronger assumptions that $\rk v>0$ and for all $x\in X$, $K_x$ is $\nu_{\M,v}$-stable.\footnote{As for the hypothesis of Lemma \ref{Lem: max KW has H1=0}, note that $\nu_{\G,v}(K_x)=\rk v>0$.}
\end{remark}

It is convenient to gather the conditions on $v$ imposed by the hypotheses of Lemmas \ref{Lem: max KW has H1=0} and \ref{Lem: min KU has H-1=0} or by Remark \ref{Rmk: srt cond hyp lemmas} in the definition of two regions $\mathpzc{S}^\circ_A\subset\mathpzc{S}_A\subset K_0(X)$:
\begin{equation}\label{Eq: sets SA,S0A}
\begin{array}{c}
\mathpzc{S}_A:=\left\{v\in K_0(X)\ \bigg| \begin{array}{l}
\textup{ for any }x\in X\textup{ we have }\nu_{\G,v}(S)>0 \textup{ for any }0\neq S\subset K_x\,, \textup{ and }\\
H^{-1}_\mathcal{C}(Q)=0\textup{ for any quotient }K_x\to Q\textup{ with }\nu_{\M,v}(Q)=0
\end{array}\right\}\,,\\
\ \\
\mathpzc{S}^\circ_A:=\left\{v\in K_0(X)\ | \rk v>0\textup{ and }K_x\textup{ is }\nu_{\M,v}\textup{-stable for all }x\in X\right\}\,.
\end{array}
\end{equation}

Again, in the examples it will be enough to twist the collection $\mathfrak{E}$ by a line bundle to have any $v\in K_0(X)$ of positive rank inside such a region.

\begin{corollary}\label{Cor: ss KV monad}
Take $K_V\in\mathcal{K}$ of class $v\in\mathpzc{S}_A$. If $K_V$ is $\sigma_{\G}$-semistable, then it is a monad, that is $K_V\in\mathcal{C}$.
\end{corollary}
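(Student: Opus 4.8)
The goal is to verify the two vanishings $H^{-1}_\mathcal{C}(K_V)=0$ and $H^1_\mathcal{C}(K_V)=0$: once these hold, assumption (A2) guarantees that $K_V$ is a complex of locally free sheaves supported in degrees $-1,0,1$, so its only surviving cohomology is the sheaf $H^0_\mathcal{C}(K_V)$, which is exactly the assertion that $K_V$ is a monad and that $K_V\simeq H^0_\mathcal{C}(K_V)\in\mathcal{C}$. The plan is to obtain these two vanishings by feeding $K_V$ itself into Lemmas \ref{Lem: max KW has H1=0} and \ref{Lem: min KU has H-1=0}, after observing that $\sigma_{\G}$-semistability makes $K_V$ at once a $\sigma_{\G}$-maximal subobject of itself and a $\nu_{\M,v}$-maximal quotient of itself.

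First I would record that the membership $v\in\mathpzc{S}_A$ supplies precisely the hypotheses of both lemmas. The first defining condition of $\mathpzc{S}_A$, namely $\nu_{\G,v}(S)>0$ for every nonzero $S\subset K_x$, is verbatim the hypothesis of Lemma \ref{Lem: max KW has H1=0}; moreover, since $\nu_{\G,v}=t\,\nu_{\M,v}+\nu_{\chi,v}$, reading off the leading coefficient shows $\nu_{\M,v}(S)\geq0$ for all such $S$, and together with $\nu_{\M,v}(K_x)=0$ this says that each $K_x$ is $\nu_{\M,v}$-semistable. Combined with the second defining condition of $\mathpzc{S}_A$ (vanishing of $H^{-1}_\mathcal{C}(Q)$ for quotients $Q$ of $K_x$ with $\nu_{\M,v}(Q)=0$), this is exactly the hypothesis of Lemma \ref{Lem: min KU has H-1=0}.

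For the surjectivity vanishing I would argue as follows. Since $\sigma_{\G}$ is alternating we have $\sigma_{\G}(K_V,K_V)=0$, while $\sigma_{\G}$-semistability gives $\sigma_{\G}(B,K_V)\leq0$ for every subobject $B\subset K_V$; hence $K_V$ attains the maximum of $\sigma_{\G}(\,\cdot\,,K_V)$ and is a $\sigma_{\G}$-maximal subobject of itself, so Lemma \ref{Lem: max KW has H1=0} yields $H^1_\mathcal{C}(K_V)=0$. For the injectivity vanishing I would compare the leading ($t$-)coefficients in $\sigma_{\G}(B,K_V)\leq0$ to get $\sigma_{\M}(B,K_V)\leq0$ for every $B$; equivalently, every quotient $K_U=K_V/B$ satisfies $\nu_{\M,v}(K_U)=-\nu_{\M,v}(B)\leq0=\nu_{\M,v}(K_V)$, so that the quotient $K_V$ itself maximizes $\nu_{\M,v}$. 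This is exactly the input of Lemma \ref{Lem: min KU has H-1=0}, applied with the trivial subobject $K_W=0$, and it delivers $H^{-1}_\mathcal{C}(K_V)=0$.

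The corollary is genuinely easy given the two lemmas, so the substance lies there rather than here; the only delicate point I would take care to address explicitly is this last use of the zero subobject. Definition \ref{Defn: sigma-max sub} requires a $\sigma_{\M}$-maximal subobject to be nonzero, so strictly speaking $K_W=0$ is not covered by the statement of Lemma \ref{Lem: min KU has H-1=0}. However, inspecting its proof shows that the only property used is that the quotient $K_V/K_W$ maximizes $\nu_{\M,v}$ among all quotients of $K_V$, which holds for $K_V=K_V/0$ by the computation above. I would therefore either invoke the proof of the lemma directly for this maximal quotient, or restate the lemma in terms of $\nu_{\M,v}$-maximal quotients so that the case $K_W=0$ is included; all remaining steps are immediate lexicographic manipulations of the decomposition $\sigma_{\G}=t\,\sigma_{\M}+\sigma_\chi$.
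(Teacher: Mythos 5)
Your proof is correct and follows essentially the same route as the paper: the paper's own argument is exactly that a $\sigma_{\G}$-semistable $K_V$ has minimal $\nu_{\G,v}$ among its subobjects and maximal $\nu_{\M,v}$ among its quotients, and then invokes Lemmas \ref{Lem: max KW has H1=0} and \ref{Lem: min KU has H-1=0}. Your extra remarks -- that $\nu_{\M,v}$-semistability of $K_x$ follows from the first defining condition of $\mathpzc{S}_A$, and that the application of Lemma \ref{Lem: min KU has H-1=0} really uses the quotient $K_V/0=K_V$ maximizing $\nu_{\M,v}$ rather than a literally nonzero $\sigma_{\M}$-maximal subobject -- are accurate and in fact slightly more careful than the paper's own two-line proof.
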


\begin{proof}
If a nonzero $K_V$ is $\sigma_{\G}$-semistable (hence $\nu_{\G,v}$-semistable), then it has minimal $\nu_{\G,v}$ between its subobjects, and maximal $\nu_{\M,v}$ between its quotients. So we can apply Lemmas \ref{Lem: max KW has H1=0} and \ref{Lem: min KU has H-1=0} to deduce that $H_\mathcal{C}^{-1}(K_V)=H_\mathcal{C}^1(K_V)=0$.
\end{proof}

Summing up, Lemma \ref{Lem: sem shvs are in K} and Corollary \ref{Cor: ss KV monad} tell us that:

\begin{proposition}
Assume that $\mathfrak{E}$ is monad-friendly (Def.\ \ref{Defn: mod-fr exc coll}). Then condition (C1) of \S\ref{Compat hearts stab} is verified for Gieseker stability $\sigma_{\G}$, the hearts $\mathcal{C},\mathcal{K}$ and for all $v\in\mathpzc{R}^{\G}_A\cap\mathpzc{S}_A$.
\end{proposition}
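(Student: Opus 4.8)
The plan is to read off condition (C1) directly from the two results just established, treating its two halves (a) and (b) separately. Throughout, $\mathcal{D}=D^b(X)$ and the two hearts are $\mathcal{C}=\Coh_{\mathcal{O}_X}$ and $\mathcal{K}$; monad-friendliness of $\mathfrak{E}$ is the standing hypothesis that makes both cited results available, since (A1) enters Lemma \ref{Lem: sem shvs are in K} through the Gieseker-semistability of the ${}^\vee\!E_i$, while (A2) underlies the monad description of $\mathcal{K}$ used in Corollary \ref{Cor: ss KV monad}.

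For part (a), I would start from an object $D$ of class $v\in\mathpzc{R}^{\G}_A\cap\mathpzc{S}_A$ that is $\sigma_{\G}$-semistable in $\mathcal{C}$, that is, a coherent sheaf $\mathcal{F}$. Because $v\in\mathpzc{R}^{\G}_A$ forces $\rk v>0$, Lemma \ref{Lem: Gies,slope-stab and sigma}(2) translates $\sigma_{\G}$-semistability into ordinary Gieseker-semistability of $\mathcal{F}$. Lemma \ref{Lem: sem shvs are in K}, whose hypothesis $v\in\mathpzc{R}^{\G}_A$ is exactly the relevant half of our constraint, then yields $\mathcal{F}\in\mathcal{K}$, which is the content of (C1)(a).

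For part (b), I would start from an object $D$ of class $v$ that is $\sigma_{\G}$-semistable in $\mathcal{K}$, that is, a Kronecker complex $K_V$. Here no translation is needed, since $\sigma_{\G}$-semistability in $\mathcal{K}$ is precisely the hypothesis of Corollary \ref{Cor: ss KV monad}; its other hypothesis $v\in\mathpzc{S}_A$ is the remaining half of our constraint. The Corollary then gives $H_\mathcal{C}^{-1}(K_V)=H_\mathcal{C}^1(K_V)=0$, so that $K_V$ is a monad and hence lies in $\mathcal{C}$, which is (C1)(b). Combining (a) and (b) proves (C1) for every $v\in\mathpzc{R}^{\G}_A\cap\mathpzc{S}_A$.

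Since the substantial arguments have already been carried out in Lemma \ref{Lem: sem shvs are in K} and Corollary \ref{Cor: ss KV monad}, there is no genuine obstacle remaining; the only point requiring care is the bookkeeping of hypotheses—verifying that part (a) needs exactly $\mathpzc{R}^{\G}_A$, that part (b) needs exactly $\mathpzc{S}_A$, and that the positive-rank condition common to both regions is what legitimizes the passage between $\sigma_{\G}$-stability and Gieseker stability used in part (a).
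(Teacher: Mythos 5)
Your proposal is correct and follows exactly the paper's own route: the proposition is stated there as an immediate summary of Lemma \ref{Lem: sem shvs are in K} (giving (C1)(a) via the constraint $v\in\mathpzc{R}^{\G}_A$) and Corollary \ref{Cor: ss KV monad} (giving (C1)(b) via $v\in\mathpzc{S}_A$). Your explicit bookkeeping — in particular invoking Lemma \ref{Lem: Gies,slope-stab and sigma}(2) to pass between $\sigma_{\G}$-semistability and Gieseker semistability using $\rk v>0$ — is exactly the right (and only) point needing care.
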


\subsection{Condition (C2)}\label{Cond C2}
Now we turn to the analysis of condition (C2) of \S\ref{Compat hearts stab}: we want to show that a monad $K_V\in\mathcal{K}$ of class $v$ is $\sigma_{\G}$-(semi)stable as an object of $\mathcal{K}$ if and only if its middle cohomology is $\sigma_{\G}$-(semi)stable as an object of $\mathcal{C}$, that is, a Gieseker-(semi)stable sheaf. First we prove the ``only if" direction:

\begin{lemma}
Suppose that $v\in\mathpzc{R}^\textup{G}_A$, and let $K_V\in v$ be monad which is a $\sigma_{\G}$-(semi)stable object of $\mathcal{K}$. Then its middle cohomology $H^0_\mathcal{C}(K_V)$ is a Gieseker-(semi)stable sheaf.
\end{lemma}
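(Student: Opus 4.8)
The plan is to prove the contrapositive-flavored statement by contradiction, turning a \emph{destabilizing subsheaf} of $\mathcal{F}:=H^0_\mathcal{C}(K_V)$ into a genuine \emph{subobject of $K_V$ in the heart $\mathcal{K}$}. Since $K_V$ is a monad we have $\mathcal{F}\cong K_V$ in $D^b(X)$ and $\mathcal{F}\in\mathcal{C}\cap\mathcal{K}$, so that $\Hom({}^\vee\!E_i,\mathcal{F})=0$ and $\Ext^2({}^\vee\!E_i,\mathcal{F})=0$ for all $i$ (i.e.\ $H^{-1}_\mathcal{K}(\mathcal{F})=H^1_\mathcal{K}(\mathcal{F})=0$). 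The naive attempt of feeding an arbitrary subsheaf $\mathcal{F}'\subset\mathcal{F}$ into the long exact sequence of $H^\bullet_\mathcal{K}$ fails, because it produces correction terms $H^{-1}_\mathcal{K}(\mathcal{F}'')$ and $H^1_\mathcal{K}(\mathcal{F}')$ whose contribution to $\nu_{\G,v}$ is not easy to sign. The key idea is therefore to choose $\mathcal{F}'$ using the Harder--Narasimhan filtration so that \emph{both} $\mathcal{F}'$ and the quotient $\mathcal{F}''$ land in $\mathcal{K}$ and all correction terms vanish.

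Concretely, suppose $\mathcal{F}$ is not Gieseker-semistable, with HN filtration $0=\mathcal{F}_0\subsetneq\cdots\subsetneq\mathcal{F}_\ell=\mathcal{F}$. Let $j$ be the largest index with $p_{\mathcal{F}_j/\mathcal{F}_{j-1},A}\suG p_{v,A}$; non-semistability gives $1\le j\le\ell-1$, and I set $\mathcal{F}':=\mathcal{F}_j$ and $\mathcal{F}'':=\mathcal{F}/\mathcal{F}_j$. By construction every HN factor of $\mathcal{F}'$ has reduced Hilbert polynomial $\suG p_{v,A}$ and every HN factor of $\mathcal{F}''$ has reduced Hilbert polynomial $\prqG p_{v,A}$. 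I then check $\mathcal{F}',\mathcal{F}''\in\mathcal{K}$ using only the two-sided width of $\mathpzc{R}^\textup{G}_A$, assumption (A1), and the vanishing of $\Hom$ between semistable sheaves with ordered reduced Hilbert polynomials:
\begin{itemize}
\item $H^{-1}_\mathcal{K}(\mathcal{F}')=\Hom({}^\vee\!E_i,\mathcal{F}')\hookrightarrow\Hom({}^\vee\!E_i,\mathcal{F})=0$ and, dually, $H^1_\mathcal{K}(\mathcal{F}'')=\Hom(\mathcal{F}'',{}^\vee\!E_i\otimes\omega_X)^\vee\hookrightarrow\Hom(\mathcal{F},{}^\vee\!E_i\otimes\omega_X)^\vee=0$, both by functoriality of $\Hom$ applied to $\mathcal{F}'\hookrightarrow\mathcal{F}$ and $\mathcal{F}\twoheadrightarrow\mathcal{F}''$;
\item $H^1_\mathcal{K}(\mathcal{F}')=\Hom(\mathcal{F}',{}^\vee\!E_i\otimes\omega_X)^\vee=0$ by Serre duality, since the minimal HN reduced Hilbert polynomial of $\mathcal{F}'$ exceeds $p_{v,A}\suG\max_i p_{{}^\vee\!E_i\otimes\omega_X,A}$, so there is no nonzero map to the semistable ${}^\vee\!E_i\otimes\omega_X$;
\item $H^{-1}_\mathcal{K}(\mathcal{F}'')=\Hom({}^\vee\!E_i,\mathcal{F}'')=0$, since the maximal HN reduced Hilbert polynomial of $\mathcal{F}''$ is $\prqG p_{v,A}\prG\min_i p_{{}^\vee\!E_i,A}$, so the semistable ${}^\vee\!E_i$ admits no nonzero map into $\mathcal{F}''$.
\end{itemize}

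With all four correction terms zero, the long exact sequence of $H^\bullet_\mathcal{K}$ collapses to a short exact sequence $0\to\mathcal{F}'\to K_V\to\mathcal{F}''\to0$ \emph{in $\mathcal{K}$}, exhibiting $\mathcal{F}'$ as a nonzero proper subobject of $K_V$ in $\mathcal{K}$. I then sign $\nu_{\G,v}(\mathcal{F}')$ by additivity over the HN factors of $\mathcal{F}'$: each factor is either a positive-rank semistable sheaf with $p\suG p_{v,A}$, contributing a negative summand by Lemma \ref{Lem: Gies,slope-stab and sigma}, or a torsion sheaf $\mathcal{T}$, for which $\nu_{\G,v}(\mathcal{T})=-\rk v\,(t\deg_A\mathcal{T}+\chi(\mathcal{T}))<0$ directly. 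Hence $\nu_{\G,v}(\mathcal{F}')<0$, contradicting the $\sigma_\G$-semistability of $K_V$, which forces $\nu_{\G,v}(\mathcal{F}')\ge0$ for every subobject. This settles the semistable case. For the stable case I assume $\mathcal{F}$ is strictly semistable and take a proper subsheaf $\mathcal{F}'\subsetneq\mathcal{F}$ that is Gieseker-semistable with $p_{\mathcal{F}',A}=p_{v,A}$ (a term of a Jordan--Hölder filtration inside $\mathcal{S}_A(p_{v,A})$); then $\mathcal{F}'$ and $\mathcal{F}''$ lie in $\mathcal{S}_A(p_{v,A})$, whose classes are in $\mathpzc{R}^\textup{G}_A$, so both lie in $\mathcal{K}$ by Lemma \ref{Lem: sem shvs are in K}, giving once more a short exact sequence in $\mathcal{K}$ with $\nu_{\G,v}(\mathcal{F}')=0$, which contradicts the $\sigma_\G$-\emph{stability} of $K_V$.

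The step I expect to require the most care is verifying $\mathcal{F}',\mathcal{F}''\in\mathcal{K}$: this is exactly where the hypothesis $v\in\mathpzc{R}^\textup{G}_A$ (squeezing $p_{v,A}$ strictly between $\max_i p_{{}^\vee\!E_i\otimes\omega_X,A}$ and $\min_i p_{{}^\vee\!E_i,A}$) and assumption (A1) enter essentially, and where one must phrase the $\Hom$-vanishings in terms of the maximal and minimal HN reduced Hilbert polynomials of $\mathcal{F}'$ and $\mathcal{F}''$ rather than of the whole sheaves. Once this membership is established, the remaining argument is the elementary additivity and sign bookkeeping above.
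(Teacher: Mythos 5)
Your proof is correct and follows the same overall strategy as the paper's: extract a destabilizing piece from the Harder--Narasimhan filtration of $\mathcal{F}=H^0_\mathcal{C}(K_V)$, kill the outer terms of the long exact sequence of $H^\bullet_\mathcal{K}$ so that the piece and its quotient both land in $\mathcal{K}$, and read off a subobject of $K_V$ violating $\sigma_{\G}$-(semi)stability. The one substantive difference is where you truncate the HN filtration, and your choice is actually the more robust one. The paper takes $\mathcal{F}_1$, the \emph{first} HN term, and asserts $P_{\mathcal{F},A}\suqG P_{\mathcal{F}/\mathcal{F}_1,A,\max}$ in order to conclude $\Hom({}^\vee\!E_i,\mathcal{F}/\mathcal{F}_1)=0$; that inequality is not automatic for an arbitrary non-semistable sheaf (the second HN factor can still have reduced Hilbert polynomial $\suG p_{\mathcal{F},A}$, e.g.\ for $\mathcal{O}(5)\oplus\mathcal{O}(1)\oplus\mathcal{O}(-1)^{\oplus 6}$ on $\mathbb{P}^2$), so the vanishing the paper needs is only guaranteed for the truncation you chose, namely the largest $j$ with $p_{\mathcal{F}_j/\mathcal{F}_{j-1},A}\suG p_{v,A}$: then every HN factor of $\mathcal{F}''$ has $p\prqG p_{v,A}\prG\min_ip_{{}^\vee\!E_i,A}$ and every factor of $\mathcal{F}'$ has $p\suG p_{v,A}\suG\max_ip_{{}^\vee\!E_i\otimes\omega_X,A}$, which is exactly what the $\Hom$- and $\Ext^2$-vanishings against the semistable bundles ${}^\vee\!E_i$ require. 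You also verify all four outer vanishings directly (two of them by left-exactness of $\Hom$ from $\mathcal{F}\in\mathcal{K}$), whereas the paper checks two and extracts the remaining two from the exact sequence; both routes are fine once the correct truncation is in place. Your sign bookkeeping for $\nu_{\G,v}(\mathcal{F}')$, including the separate treatment of torsion HN factors, and your handling of the strictly semistable case via a Jordan--H\"older subobject of $\mathcal{S}_A(p_{v,A})$ (which has positive rank and reduced Hilbert polynomial $p_{v,A}$, hence lies in $\mathcal{K}$ by Lemma \ref{Lem: sem shvs are in K}) agree with the paper's argument.
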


\begin{proof}
Suppose that $\mathcal{F}:=H^0_\mathcal{C}(K_V)$ is not Gieseker-semistable. Let $\mathcal{F}_1\subset\mathcal{F}$ be the maximally destabilizing subsheaf (i.e.\ the first nonzero term in the HN filtration of $\mathcal{F}$), which is semistable and satisfies $P_{\mathcal{F}_1,A}\suG P_{\mathcal{F},A}\suqG P_{\mathcal{F}/\mathcal{F}_1,A,\max}$ (notation as in Eq.\ \eqref{Eq: min max Hilb pol}). Then, as in the proof of Lemma \ref{Lem: sem shvs are in K}, we deduce that
\begin{equation*}
\Hom({}^\vee\!E_i,\mathcal{F}/\mathcal{F}_1)=0\,,\quad \Ext^2({}^\vee\!E_i,\mathcal{F}_1)=\Hom(\mathcal{F}_1,{}^\vee\!E_i\otimes\omega_X)=0
\end{equation*}
for all $i$. These vanishings mean that $H_\mathcal{K}^\ell(\mathcal{F}/\mathcal{F}_1)=0$ for all $\ell\neq0,1$ and $H_\mathcal{K}^\ell(\mathcal{F}_1)=0$ for all $\ell\neq-1,0$, so we get a long exact sequence
\begin{equation}\label{Eq: ex seq HK(F)}
0\to H^{-1}_\mathcal{K}(\mathcal{F}_1)\to 0\to 0\to H^0_\mathcal{K}(\mathcal{F}_1)\to H^0_\mathcal{K}(\mathcal{F})\to H^0_\mathcal{K}(\mathcal{F}/\mathcal{F}_1)\to 0\to 0\to H^1_\mathcal{K}(\mathcal{F}/\mathcal{F}_1)\to 0\,,
\end{equation}
showing that $H^{-1}_\mathcal{K}(\mathcal{F}_1)=H^1_\mathcal{K}(\mathcal{F}/\mathcal{F}_1)=0$, that is $\mathcal{F},\mathcal{F}/\mathcal{F}_1\in\mathcal{K}$, and the remaining short exact sequence means that $K_V=H^0_\mathcal{K}(\mathcal{F})$ is not $\sigma_{\G}$-semistable.

Finally, if $\mathcal{F}$ is strictly $\sigma_{\G}$-semistable, then we take $\mathcal{F}_1\subsetneq\mathcal{F}$ with $P_{\mathcal{F}_1,A}\equiv_\textup{G}P_{\mathcal{F},A}\equiv_\textup{G}P_{\mathcal{F}/\mathcal{F}_1,A}$ (hence $\mathcal{F}_1$ and $\mathcal{F}/\mathcal{F}_1$ are semistable) and again we get a short exact sequence as in Eq.\ \eqref{Eq: ex seq HK(F)}, showing that $K_V$ is not $\sigma_{\G}$-stable.
\end{proof}

Now we prove the ``if'' direction with a specular argument:

\begin{lemma}
Suppose that $K_V\in\mathcal{K}$ is a monad of class $v\in\mathpzc{S}_A$ whose middle cohomology $H^0_\mathcal{C}(K_V)$ is a Gieseker-(semi)stable sheaf. Then $K_V$ is $\sigma_{\G}$-(semi)stable as an object of $\mathcal{K}$.
\end{lemma}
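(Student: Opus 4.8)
The plan is to argue by contradiction, in a way exactly specular to the previous lemma: there one started from a destabilizing subsheaf of $\mathcal{F}=H^0_\mathcal{C}(K_V)$ in $\mathcal{C}$ and produced a destabilizing subobject of $K_V$ in $\mathcal{K}$, whereas here I would start from a hypothetical destabilizing subobject of $K_V$ in $\mathcal{K}$ and produce a destabilizing subsheaf of $\mathcal{F}$ in $\mathcal{C}$. So suppose $K_V$ is \emph{not} $\sigma_{\G}$-semistable (resp.\ not $\sigma_{\G}$-stable) in $\mathcal{K}$. The dimension vectors $\dimvec S$ of subobjects $S\subset K_V$ are bounded by $d^v$, so only finitely many values $\sigma_{\G}(S,K_V)$ occur and a $\sigma_{\G}$-maximal subobject $K_W\subset K_V$ exists (Def.\ \ref{Defn: sigma-max sub}). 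Since $\sigma_{\G}(K_V,K_V)=0$, the failure of semistability (resp.\ stability) forces $\sigma_{\G}(K_W,K_V)>0$ (resp.\ $=0$) for a proper nonzero such $K_W$; equivalently $\nu_{\G,v}(K_W)<0$ (resp.\ $=0$), where $\nu_{\G,v}(K_W)=-\sigma_{\G}(K_W,K_V)$.

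The key step is to show that $K_W$ and $K_U:=K_V/K_W$ are again monads. Because $v\in\mathpzc{S}_A$, all the hypotheses of Lemmas \ref{Lem: max KW has H1=0} and \ref{Lem: min KU has H-1=0} hold, and since $K_W$ is $\sigma_{\G}$-maximal it is in particular $\sigma_{\M}$-maximal (as noted after Lemma \ref{Lem: min KU has H-1=0}); these lemmas then give $H^1_\mathcal{C}(K_W)=0$ and $H^{-1}_\mathcal{C}(K_U)=0$. Feeding the short exact sequence $0\to K_W\to K_V\to K_U\to 0$ in $\mathcal{K}$ into the long exact sequence of $\mathcal{C}$-cohomology and using that $K_V$ is a monad (so $H^{-1}_\mathcal{C}(K_V)=H^1_\mathcal{C}(K_V)=0$), I would read off $H^{-1}_\mathcal{C}(K_W)=0$ and $H^1_\mathcal{C}(K_U)=0$ as well. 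Hence $K_W,K_U\in\mathcal{C}$, and the sequence collapses to a short exact sequence of sheaves $0\to\mathcal{G}\to\mathcal{F}\to\mathcal{H}\to 0$ with $\mathcal{G}:=H^0_\mathcal{C}(K_W)$ and $\mathcal{H}:=H^0_\mathcal{C}(K_U)$. As $K_W$ is proper and nonzero in $\mathcal{K}$, both $\mathcal{G}$ and $\mathcal{H}$ are nonzero, so $\mathcal{G}$ is a proper nonzero subsheaf of $\mathcal{F}$.

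To close the argument I would invoke purity. Taking $S=K_x$ in the defining condition of $\mathpzc{S}_A$ gives $\rk v=\nu_{\G,v}(K_x)>0$, so $\mathcal{F}$ is a Gieseker-semistable sheaf of positive rank, hence torsion-free; its nonzero subsheaf $\mathcal{G}$ therefore has positive rank and Lemma \ref{Lem: Gies,slope-stab and sigma}(2) applies. Gieseker-semistability (resp.\ stability) of $\mathcal{F}$ then yields $\sigma_{\G}(\mathcal{G},\mathcal{F})\leq 0$ (resp.\ $<0$), that is $\nu_{\G,v}(K_W)=-\sigma_{\G}(\mathcal{G},\mathcal{F})\geq 0$ (resp.\ $>0$), contradicting $\nu_{\G,v}(K_W)<0$ (resp.\ $=0$) found above. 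This contradiction shows that $K_V$ is $\sigma_{\G}$-(semi)stable in $\mathcal{K}$, as desired.

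The hard part is the middle step: \emph{a priori} a destabilizing subobject of $K_V$ in $\mathcal{K}$ is an arbitrary Kronecker complex, with no reason to be (or come from) a subsheaf of $\mathcal{F}$, so the two stability comparisons cannot be matched directly. The resolution is that one does not test an arbitrary subobject but a $\sigma_{\G}$-maximal one, for which Lemmas \ref{Lem: max KW has H1=0}--\ref{Lem: min KU has H-1=0} (which hold precisely because $v\in\mathpzc{S}_A$) force \emph{both} $K_W$ and the quotient $K_U$ to be monads; this is exactly what converts the comparison in $\mathcal{K}$ into the sheaf-theoretic comparison in $\mathcal{C}$. Minor points requiring attention are the existence of a maximal subobject (finiteness of the possible classes of subobjects) and the positivity of $\rk\mathcal{G}$, without which the reformulation of Gieseker stability in Lemma \ref{Lem: Gies,slope-stab and sigma} would not be available.
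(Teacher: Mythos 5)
Your proposal is correct and follows essentially the same route as the paper's own proof: pick a $\sigma_{\G}$-maximal subobject $K_W$ (existing by finiteness of the possible classes), use Lemmas \ref{Lem: max KW has H1=0} and \ref{Lem: min KU has H-1=0} to force $K_W$ and $K_V/K_W$ into $\mathcal{C}$ via the long exact sequence, and read off a destabilizing subsheaf of $H^0_\mathcal{C}(K_V)$. Your explicit check that $\rk v>0$ (so that Lemma \ref{Lem: Gies,slope-stab and sigma}(2) converts $\sigma_{\G}$-stability into Gieseker stability) is a point the paper leaves implicit, but it does not change the argument.
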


\begin{proof}
Suppose that $K_V$ is $\sigma_{\G}$-unstable, take a $\sigma_{\G}$-maximal subobject $0\neq K_W\subsetneq K_V$ in $\mathcal{K}$ (this exists as the subobjects of $K_V$ can only belong to finitely many classes in $K_0(X)$) and apply Lemmas \ref{Lem: max KW has H1=0} and \ref{Lem: min KU has H-1=0}, to get the vanishings $H^1_\mathcal{C}(K_W)=H^{-1}_\mathcal{C}(K_V/K_W)=0$ and then an exact sequence
	\begin{equation}\label{Eq: ex seq HC(K)}
	0\to H^{-1}_\mathcal{C}(K_W) \to 0\to 0\to
	H^0_\mathcal{C}(K_W)\to H^0_\mathcal{C}(K_V)\to H^0_\mathcal{C}(\nicefrac{K_V}{K_W})\to
	0\to 0\to H^1_\mathcal{C}(\nicefrac{K_V}{K_W})\to 0
	\end{equation}
showing that $K_W,K_V/K_W\in\mathcal{C}$ and that $\mathcal{F}:=H^0_\mathcal{C}(K_V)$ is also $\sigma_{\G}$-unstable as an object of $\mathcal{C}$. Now suppose that $K_V$ is strictly $\sigma_{\G}$-semistable: we have again a $0\neq K_W\subsetneq K_V$ maximizing $\nu_{\G,v}$, so that the lemmas apply and we end up with a short exact sequence as in \eqref{Eq: ex seq HC(K)}, showing that $\mathcal{F}$ is not $\sigma_{\G}$-stable.
\end{proof}

So we can conclude that:

\begin{proposition}
Assume that $\mathfrak{E}$ is monad-friendly (Def.\ \ref{Defn: mod-fr exc coll}). Then condition (C2) of \S\ref{Compat hearts stab} is verified for Gieseker stability $\sigma_{\G}$, the hearts $\mathcal{C},\mathcal{K}$ and for all $v\in\mathpzc{R}^{\G}_A\cap\mathpzc{S}_A$.
\end{proposition}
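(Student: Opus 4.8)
The plan is to obtain condition (C2) as a formal consequence of the two lemmas just established, so that the only real work is to match up their hypotheses and to identify the objects involved. First I would record what an object $D\in\mathcal{C}\cap\mathcal{K}$ of class $v$ is: viewed in $\mathcal{K}$ it is a Kronecker complex $K_V$ whose only nonzero standard cohomology sits in degree $0$ (a \emph{monad}), and viewed in $\mathcal{C}$ it is the middle cohomology sheaf $\mathcal{F}:=H^0_\mathcal{C}(K_V)$; these are the same object of $D^b(X)$ seen through the two t-structures. Since $v\in\mathpzc{R}^{\G}_A$ forces $\rk v>0$, Lemma~\ref{Lem: Gies,slope-stab and sigma} lets me replace ``$\sigma_{\G}$-(semi)stable in $\mathcal{C}$'' by ``Gieseker-(semi)stable'' throughout, so that (C2) becomes exactly the equivalence: $K_V$ is $\sigma_{\G}$-(semi)stable in $\mathcal{K}$ if and only if $\mathcal{F}$ is Gieseker-(semi)stable.

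Next I would feed in the two directions. The first lemma of this subsection, applicable because $v\in\mathpzc{R}^{\G}_A$, yields the implication ``$\sigma_{\G}$-(semi)stable in $\mathcal{K}$ $\Rightarrow$ Gieseker-(semi)stable''. The second lemma, applicable because $v\in\mathpzc{S}_A$, yields the reverse implication ``Gieseker-(semi)stable $\Rightarrow$ $\sigma_{\G}$-(semi)stable in $\mathcal{K}$''. Taking $v$ in the intersection $\mathpzc{R}^{\G}_A\cap\mathpzc{S}_A$ makes both hypotheses simultaneously available, so the two implications combine into the required equivalence, which is precisely condition (C2). The monad-friendliness of $\mathfrak{E}$ enters only through these lemmas: assumption (A2) guarantees that elements of $\mathcal{K}$ are genuine three-term complexes of bundles, which is what makes the cohomological functors $H^\ell_\mathcal{C}$, $H^\ell_\mathcal{K}$ and the associated long exact sequences behave as used there.

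I do not expect a genuine obstacle here, since the substantive arguments — the construction of destabilizing sub- and quotient objects via the complexes $K_x$ together with Proposition~\ref{Prop: morphs to Vx}, and the long exact sequence manipulations — are already carried out in the two lemmas. The only points demanding care are bookkeeping ones: invoking each implication with its own region ($\mathpzc{R}^{\G}_A$ for one direction, $\mathpzc{S}_A$ for the other) and confirming that their intersection is exactly the set claimed in the statement; and checking that passing between the monad $K_V$ and its sheaf $\mathcal{F}=H^0_\mathcal{C}(K_V)$ is compatible with the two notions of subobject, which is immediate from the fact (recalled in \S\ref{t-structures}) that short exact sequences in a heart correspond to distinguished triangles in $D^b(X)$.
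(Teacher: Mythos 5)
Your proposal is correct and follows the paper's own route exactly: the Proposition is obtained by combining the two lemmas of this subsection (the ``only if'' direction valid for $v\in\mathpzc{R}^{\G}_A$, the ``if'' direction valid for $v\in\mathpzc{S}_A$), with the identification of $\sigma_{\G}$-(semi)stability in $\mathcal{C}$ with Gieseker (semi)stability for classes of positive rank supplied by Lemma~\ref{Lem: Gies,slope-stab and sigma}. Your extra bookkeeping remarks (which region feeds which implication, and that an object of $\mathcal{C}\cap\mathcal{K}$ is a monad identified with its middle cohomology) are exactly the points implicit in the paper's one-line deduction.
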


\subsection{Conclusions}
We summarize the results of \S\ref{Cond C1} and \S\ref{Cond C2}. We recall that $X$ is a smooth projective surface, $A$ is an ample divisor, and we are supposing that $D^b(X)$ has a full strong exceptional collection ${}^\vee\!\mathfrak{E}$ which is monad-friendly with respect to $A$ (Def.\ \ref{Defn: mod-fr exc coll}). Recall also that ${}^\vee\!\mathfrak{E}$ determines a quiver $Q$ with relations $J$, together with an equivalence $\Psi:D^b(X)\to D^b(Q;J)$ (Eq.\ \eqref{Eq: equiv Psi for surf}), a heart $\mathcal{K}\subset D^b(X)$ and an isomorphism $\psi:K_0(X)\to K_0(Q;J)$. For any class $v\in K_0(X)$ we denote by $d^v:=\dimvec\psi(v)\in\mathbb{Z}^I$ the corresponding dimension vector, and by $\theta_{\G,v}=t\theta_{\M,v}+\theta_{\chi,v}\in\mathbb{Z}[t]^I$ the array of polynomials defined in Eq.\ \eqref{Eq: thetaM,thetachi}.

Now consider the conical region
\begin{equation*}
\tilde{\mathpzc{R}}_{A,\mathfrak{E}}:=\mathpzc{R}^\textup{G}_A\cap\mathpzc{S}_A\subset K_0(X)
\end{equation*}
defined as the intersection of the regions in Equations \eqref{Eq: sets RA,RGA} and \eqref{Eq: sets SA,S0A}.

\begin{theorem}\label{Thm: C1,C2 on surf}
For all $v\in\tilde{\mathpzc{R}}_{A,\mathfrak{E}}$, the hearts $\mathcal{C}$ and $\mathcal{K}$ are $(\sigma_{\G},v)$-compatible (Def.\ \ref{Defn: (sigma,v)-compat hearts}). Thus, $\Psi$ restricts to an equivalence between the category of Gieseker-(semi)stable sheaves of class $v$ on $X$ and the category of $d^v$-dimensional $\theta_{\G,v}$-(semi)stable representations of $(Q,J)$.
\end{theorem}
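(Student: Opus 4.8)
The plan is to deduce the theorem by assembling the concluding Propositions of \S\ref{Cond C1} and \S\ref{Cond C2} with the translations between stability notions set up in \S\ref{Moduli of sheaves} and just before the statement; the genuinely technical content has already been dispatched in Lemmas \ref{Lem: max KW has H1=0}--\ref{Lem: min KU has H-1=0} and Corollary \ref{Cor: ss KV monad}. For the first assertion I would observe that $\tilde{\mathpzc{R}}_{A,\mathfrak{E}}=\mathpzc{R}^{\G}_A\cap\mathpzc{S}_A$ is precisely the locus of classes for which those two Propositions supply, respectively, conditions (C1) and (C2); by Definition \ref{Defn: (sigma,v)-compat hearts} their conjunction is exactly the $(\sigma_{\G},v)$-compatibility of $\mathcal{C}$ and $\mathcal{K}$.

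For the equivalence I would rewrite each of the categories in play as a category of $\sigma_{\G}$-(semi)stable objects and then pass through $\Psi$. First, since every $v\in\mathpzc{R}^{\G}_A$ has $\rk v>0$, Lemma \ref{Lem: Gies,slope-stab and sigma}(2) identifies the Gieseker-(semi)stable sheaves of class $v$ with the $\sigma_{\G}$-(semi)stable objects of class $v$ in the standard heart $\mathcal{C}$. Second, by the Remark following Definition \ref{Defn: (sigma,v)-compat hearts}, compatibility means precisely that the full subcategories $\mathcal{C}^{\textup{ss}}_{\sigma_{\G},v}=\mathcal{K}^{\textup{ss}}_{\sigma_{\G},v}$ and $\mathcal{C}^{\textup{st}}_{\sigma_{\G},v}=\mathcal{K}^{\textup{st}}_{\sigma_{\G},v}$ coincide inside $D^b(X)$. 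Third, as recorded before the statement, $\Psi$ maps $\mathcal{K}$ isomorphically onto $\Rep^\textup{fd}_\mathbb{C}(Q;J)$ and carries a $\sigma_{\G}$-(semi)stable object of $\mathcal{K}$ of class $v$ to a $\theta_{\G,v}$-(semi)stable representation of dimension vector $d^v$ in the sense of Definition \ref{Defn: theta-stab qui rep}; here the balancing condition $\theta_{\G,v}\cdot d^v=\sigma_{\G}(v,v)=0$ holds because $\sigma_{\G}$ is alternating.

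Chaining these three identifications, the objects of the full subcategory of Gieseker-(semi)stable sheaves of class $v$ are carried bijectively, up to isomorphism, onto the $\theta_{\G,v}$-(semi)stable representations of dimension $d^v$; since $\Psi$ is a triangulated equivalence it restricts to an equivalence between any full subcategory and the full subcategory spanned by the images of its objects, which yields the claim. I do not anticipate a serious obstacle in this final assembly, since the hard analysis is already complete; the only points to watch are that the positivity $\rk v>0$ coming from $\mathpzc{R}^{\G}_A$ is what licenses the use of Lemma \ref{Lem: Gies,slope-stab and sigma}, and that the exactness of $\Psi$ on the hearts is what makes ``being a subobject'' correspond on the two sides, so that $\sigma_{\G}$-stability in $\mathcal{K}$ genuinely translates into King's $\theta_{\G,v}$-stability of representations.
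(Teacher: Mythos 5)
Your proposal is correct and matches the paper's own treatment: the theorem is stated there as a summary, obtained exactly by conjoining the concluding Propositions of \S\ref{Cond C1} and \S\ref{Cond C2} to get $(\sigma_{\G},v)$-compatibility on $\tilde{\mathpzc{R}}_{A,\mathfrak{E}}=\mathpzc{R}^{\G}_A\cap\mathpzc{S}_A$, and then translating via Lemma \ref{Lem: Gies,slope-stab and sigma}(2) (using $\rk v>0$) and the identification of $\sigma_{\G}$-stability in $\mathcal{K}$ with $\theta_{\G,v}$-stability under $\Psi$. Your attention to the two hinge points (positivity of rank, and exactness of $\Psi$ on hearts making subobjects correspond) is exactly what the argument requires.
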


As already observed in Remark \ref{Rmk: mod stack comp herts coinc}, this theorem implies that the moduli stack $\mathfrak{M}^\textup{ss}_{X,A}(v)$ of $\sigma_G$-semistable objects in $\mathcal{C}$ with class $v$ coincides with the moduli stack of $\sigma_G$-semistable objects in $\mathcal{K}$ with class $v$, which (recall the discussion of \S\ref{Families obj der cat}) is isomorphic to the quiver moduli stack $\mathfrak{M}^\textup{ss}_{Q,J,\theta_{\G,v}}(d^v)$. Similar arguments apply to the stable loci. Hence:

\begin{corollary}\label{Cor: isom mod st/sp}
For all $v\in\tilde{\mathpzc{R}}_{A,\mathfrak{E}}$ we have isomorphisms
\begin{equation*}
\mathfrak{M}^\textup{ss}_{X,A}(v)\simeq\mathfrak{M}^\textup{ss}_{Q,J,\theta_{\G,v}}(d^v)\quad\textup{and}\quad
\mathfrak{M}^\textup{st}_{X,A}(v)\simeq\mathfrak{M}^\textup{st}_{Q,J,\theta_{\G,v}}(d^v)\,.
\end{equation*}
In particular, we have isomorphisms
\begin{equation*}
\M^\textup{ss}_{X,A}(v)\simeq\M^\textup{ss}_{Q,J,\theta_{\G,v}}(d^v)\quad\textup{and}\quad
\M^\textup{st}_{X,A}(v)\simeq\M^\textup{st}_{Q,J,\theta_{\G,v}}(d^v)
\end{equation*}
between the coarse moduli spaces.
\end{corollary}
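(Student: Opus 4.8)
The plan is to read off the corollary from Theorem~\ref{Thm: C1,C2 on surf} together with the dictionary between hearts and moduli functors set up in \S\ref{Families obj der cat}, upgrading the fibrewise statement of $(\sigma_{\G},v)$-compatibility to an identification of moduli stacks and then descending to the coarse spaces. Concretely, I would establish the chain
\[
\mathfrak{M}^\textup{ss}_{X,A}(v)=\mathfrak{M}^\textup{ss}_{\mathcal{C},\sigma_{\G}}(v)=\mathfrak{M}^\textup{ss}_{\mathcal{K},\sigma_{\G}}(v)\simeq\mathfrak{M}^\textup{ss}_{Q,J,\theta_{\G,v}}(d^v),
\]
where $\mathfrak{M}^\textup{ss}_{\mathcal{C},\sigma_{\G}}(v)$ and $\mathfrak{M}^\textup{ss}_{\mathcal{K},\sigma_{\G}}(v)$ denote the stacks of $\sigma_{\G}$-semistable objects of class $v$ in the two hearts.

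First I would justify the outer two identifications. Since every $v\in\tilde{\mathpzc{R}}_{A,\mathfrak{E}}\subset\mathpzc{R}^\textup{G}_A$ has $\rk v>0$, Lemma~\ref{Lem: Gies,slope-stab and sigma}(2) gives that Gieseker-(semi)stability of a sheaf in class $v$ coincides with $\sigma_{\G}$-(semi)stability in $\mathcal{C}$; combined with \S\ref{Families obj der cat}(1), which says that $\M^\textup{ss}_{X,A}(v)$ corepresents the functor of families of $\sigma_{\G}$-semistable objects of class $v$ in $\mathcal{C}$, this yields the first equality of stacks. For the rightmost isomorphism I would invoke \S\ref{Families obj der cat}(2): the equivalence $\Psi$ carries families of objects of $\mathcal{K}$ to families of $\mathbb{C}Q/J$-modules, and under $\Psi$ a $\sigma_{\G}$-(semi)stable object of class $v$ in $\mathcal{K}$ is exactly a $\theta_{\G,v}$-(semi)stable representation of dimension vector $d^v$. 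Hence $\Psi$ induces an isomorphism of $\mathfrak{M}^\textup{ss}_{\mathcal{K},\sigma_{\G}}(v)$ with the quiver moduli stack $\mathfrak{M}^\textup{ss}_{Q,J,\theta_{\G,v}}(d^v)$.

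The middle equality is where Theorem~\ref{Thm: C1,C2 on surf} enters: by that theorem the hearts $\mathcal{C}$ and $\mathcal{K}$ are $(\sigma_{\G},v)$-compatible, so by Remark~\ref{Rmk: mod stack comp herts coinc} the two stacks of $\sigma_{\G}$-semistable objects of class $v$ coincide. To pass from stacks to coarse spaces I would then use that isomorphic stacks have uniquely isomorphic corepresenting objects: $\M^\textup{ss}_{X,A}(v)$ corepresents $\mathfrak{M}^\textup{ss}_{X,A}(v)$ by Theorem~\ref{Thm: mod sp sst shvs}, while $\M^\textup{ss}_{Q,J,\theta_{\G,v}}(d^v)$ corepresents $\mathfrak{M}^\textup{ss}_{Q,J,\theta_{\G,v}}(d^v)$ by Remark~\ref{Rmk: prop quiv mod}, so the stack isomorphism descends to the asserted isomorphism of coarse moduli spaces. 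The identical argument with ``ss'' replaced by ``st'' throughout handles the stable loci, using that $(\sigma_{\G},v)$-compatibility also identifies the stable objects in the two hearts.

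The point demanding the most care is the middle equality, that is the upgrade of the fibrewise compatibility of Theorem~\ref{Thm: C1,C2 on surf} to an equality of moduli \emph{stacks}. The subtlety is that $\sigma_{\G}$-(semi)stability is a heart-dependent notion, so one must check that a family over a base $S$ --- an object of $D^b(X\times S)$ all of whose derived restrictions $L\iota_s^*\mathcal{F}$ are $\sigma_{\G}$-semistable sheaves of class $v$ --- is automatically, fibre by fibre, a family of $\sigma_{\G}$-semistable objects of $\mathcal{K}$, and conversely. This is exactly what $(\sigma_{\G},v)$-compatibility provides at each closed point, and since both ``family'' and ``membership in the heart'' are fibrewise conditions, the two moduli problems are literally cut out by the same families inside the stack of all objects of class $v$ in $D^b(X)$; no input beyond Theorem~\ref{Thm: C1,C2 on surf} and the family correspondence of \S\ref{Families obj der cat} is needed, which is precisely the content of Remark~\ref{Rmk: mod stack comp herts coinc}.
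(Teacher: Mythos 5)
Your proposal is correct and follows essentially the same route as the paper: the paper also deduces the stack isomorphisms by combining Theorem \ref{Thm: C1,C2 on surf} with Remark \ref{Rmk: mod stack comp herts coinc} and the family dictionary of \S\ref{Families obj der cat}, and then passes to the coarse spaces via corepresentability. Your extra discussion of why the fibrewise compatibility suffices at the level of families is a reasonable elaboration of what the paper leaves implicit in that remark.
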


Recall that the construction of the moduli space $\M^\textup{ss}_{Q,J,\theta_{\G,v}}(d^v)$ for a polynomial array $\theta_{\G,v}\in\mathbb{Z}[t]^I$ was explained in \S\ref{Quiver moduli}, just before Example \ref{Ex: Kron modules}.


%
%
\section{Application to \texorpdfstring{$\mathbb{P}^2$}{P2}}\label{Appl to P2}
In this section we apply the previous results taking $X$ to be the complex projective plane $\mathbb{P}^2=\mathbb{P}_\mathbb{C}(Z)$, where $Z$ is a 3-dimensional $\mathbb{C}$-vector space. We choose the ample divisor as $A=H$, the divisor of a line, and we write $\deg:=\deg_H$ for simplicity.

Take $v\in K_0(\mathbb{P}^2)=K_{\num}(\mathbb{P}^2)$. By the Hirzebruch-Riemann-Roch formula \eqref{Eq: Hilb pol sh sur} we have
\begin{equation*}
P_{v,H}(t)=(t^2+3t)\frac{\rk v}{2}+t\deg v+\chi(v)\,,
\end{equation*}
with $\chi(v)=P_v(0)=\rk v+(3/2)\deg v+\ch_2 v$. 

\subsection{The first equivalence}
Take, as in Ex.\ \ref{Exs: exc coll}(1), the full strong collections
\begin{equation*}
\begin{array}{c}
\mathfrak{E}=(E_{-1},E_0,E_1)=(\mathcal{O}_{\mathbb{P}^2}(-1),\mathcal{O}_{\mathbb{P}^2},\mathcal{O}_{\mathbb{P}^2}(1))\,,\\
{}^\vee\!\mathfrak{E}=({}^\vee\!E_1,{}^\vee\!E_0,{}^\vee\!E_{-1})=(\mathcal{O}_{\mathbb{P}^2}(1),\uptau_{\mathbb{P}^2},\wedge^2\uptau_{\mathbb{P}^2}(-1))
\end{array}
\end{equation*}
(note that $\wedge^2\uptau_{\mathbb{P}^2}(-1)\simeq\mathcal{O}_{\mathbb{P}^2}(2)$). We apply Theorem \ref{Thm: Baer-Bondal} to the collection ${}^\vee\!\mathfrak{E}$: the tilting sheaf $T=\mathcal{O}_{\mathbb{P}^2}(1)\oplus\uptau_{\mathbb{P}^2}\oplus\wedge^2\uptau_{\mathbb{P}^2}(-1)$ has endomorphism algebra
\begin{equation*}
\End_{\mathcal{O}_{\mathbb{P}^2}}(T)=\begin{pmatrix}
\mathbb{C} &  & \\ 
Z & \mathbb{C} & \\ 
\wedge^2Z & Z & \mathbb{C}
\end{pmatrix}
\end{equation*}
which is identified, after fixing a $\mathbb{C}$-basis $e_0,e_1,e_2$ of $Z$, with the opposite of the bound quiver algebra $\mathbb{C}B_3/J$ of the \emph{Beilinson quiver}
\begin{center}$B_3$: \begin{tikzcd}
-1\arrow[bend left=50]{r}{a_1}\arrow{r}{a_2}\arrow[bend right=50]{r}{a_3} &0\arrow[bend left=50]{r}{b_1}\arrow{r}{b_2}\arrow[bend right=50]{r}{b_3} &1
\end{tikzcd}\end{center}
with quadratic relations $J=(b_ia_j+b_ja_i,\ i,j=1,2,3)$. So we get a triangulated equivalence
\begin{equation*}
\Psi:=\Phi_{{}^\vee\!\mathfrak{E}}[1]:D^b(\mathbb{P}^2)\to D^b(B_3;J)\,.
\end{equation*}
This maps a complex $\mathcal{F}^\bullet\in D^b(\mathbb{P}^2)$ to the complex of representations
\begin{equation*}
R\Hom_{\mathcal{O}_{\mathbb{P}^2}}(\wedge^2\uptau_{\mathbb{P}^2}(-1),\mathcal{F}^\bullet)[1]\triplerightarrow
R\Hom_{\mathcal{O}_{\mathbb{P}^2}}(\uptau_{\mathbb{P}^2},\mathcal{F}^\bullet)[1]\triplerightarrow
R\Hom_{\mathcal{O}_{\mathbb{P}^2}}(\mathcal{O}_{\mathbb{P}^2}(1),\mathcal{F}^\bullet)[1]\,.
\end{equation*}
The standard heart of $D^b(B_3;J)$ is sent to the heart
\begin{equation*}
\mathcal{K}:=\langle\mathcal{O}_{\mathbb{P}^2}(-1)[1],\mathcal{O}_{\mathbb{P}^2},\mathcal{O}_{\mathbb{P}^2}(1)[-1]\rangle_{\ext}
\end{equation*}
whose objects are \emph{Kronecker complexes}
\begin{equation*}
K_V:V_{-1}\otimes\mathcal{O}_{\mathbb{P}^2}(-1)\longrightarrow V_0\otimes\mathcal{O}_{\mathbb{P}^2}\longrightarrow V_1\otimes\mathcal{O}_{\mathbb{P}^2}(1)\,,
\end{equation*}
where the middle sheaf is in degree $0$. Moreover, the objects of ${}^\vee\!\mathfrak{E}$ are semistable bundles. Thus the sequence $\mathfrak{E}$ is monad-friendly with respect to $H$ (Def.\ \ref{Defn: mod-fr exc coll}).

The equivalence $\Psi$ also gives an isomorphism $\psi:K_0(\mathbb{P}^2)\to K_0(B_3;J)$; coordinates on the Grothendieck groups are provided by the isomorphisms
\begin{equation*}
K_0(\mathbb{P}^2)\overset{(\rk,\deg,\chi)}{\longrightarrow}\mathbb{Z}^3\,,\quad
K_0(B_3;J)\overset{\dimvec}{\longrightarrow}\mathbb{Z}^3\,,
\end{equation*}
and we denote by
\begin{equation*}
(d^v_{-1},d^v_0,d^v_1)=d^v:=\dimvec\psi(v)
\end{equation*}
the coordinates of $\psi(v)\in K_0(B_3;J)$ with respect to the basis of simple representations $S(-1),S(0),S(1)$; using the fact that these are mapped to $\mathcal{O}_{\mathbb{P}^2}(-1)[1],\mathcal{O}_{\mathbb{P}^2},\mathcal{O}_{\mathbb{P}^2}(1)[-1]$, we find that the base-change matrices between the two coordinate sets are
\begin{equation}\label{Eq: trans coord K0(P2)}
\begin{pmatrix}
d^v_{-1}\\
d^v_0\\
d^v_1
\end{pmatrix}=
\begin{pmatrix}
1 &2 &-1\\
3 &3 &-2\\
1& 1& -1
\end{pmatrix}
\begin{pmatrix}
\rk v\\
\deg v\\
\chi(v)
\end{pmatrix}\,,\quad
\begin{pmatrix}
\rk v\\
\deg v\\
\chi(v)
\end{pmatrix}=
\begin{pmatrix}
-1& 1& -1\\
1 &0 &-1\\
0 &1 &-3
\end{pmatrix}
\begin{pmatrix}
d^v_{-1}\\
d^v_0\\
d^v_1
\end{pmatrix}\,.
\end{equation}
So, given $v\in K_0(\mathbb{P}^2)$, the arrays $\theta_{\M,v},\theta_{\chi,v}\in\mathbb{Z}^{\{-1,0,1\}}$ associated to the alternating forms $\sigma_{\M},\sigma_\chi$ as in equation \eqref{Eq: thetaM,thetachi} are given by
\begin{equation*}
\begin{array}{c}
\theta_{\M,v}=
\begin{pmatrix}
-\rk v-\deg v\\
\deg v\\
\rk v-\deg v
\end{pmatrix}=
\begin{pmatrix}
-d^v_0+2d^v_1\\
d^v_{-1}-d^v_1\\
-2d^v_{-1}+d^v_0
\end{pmatrix}\,,\\
\theta_{\chi,v}=
\begin{pmatrix}
-\chi(v)\\
-\rk v+\chi(v)\\
3\rk v-\chi(v)
\end{pmatrix}=
\begin{pmatrix}
-d^v_0+3d^v_1\\
d^v_{-1}-2d^v_1\\
-3d^v_{-1}+2d^v_0
\end{pmatrix}\,.
\end{array}
\end{equation*}
The regions $\mathpzc{R}_H,\mathpzc{S}_H^\circ\subset K_0(\mathbb{P}^2)$ of equations \eqref{Eq: sets RA,RGA} and \eqref{Eq: sets SA,S0A} read now
\begin{equation*}
\mathpzc{R}_H=\mathpzc{S}_H^\circ=\{|\deg v|<\rk v\}=\{d^v_0>2d^v_{-1}\textup{ and }d^v_0>2d^v_1\}\,.
\end{equation*}
Indeed, take $x\in\mathbb{P}^2$ and let $p,q\in Z^\vee$ be linear forms whose common zero is $x$, and notice that the Kronecker complex
\begin{equation*}
K_x:\mathcal{O}_{\mathbb{P}^2}(-1)\overset{\binom{p}{q}}{\longrightarrow} \mathbb{C}^2\otimes\mathcal{O}_{\mathbb{P}^2}\overset{(q\ -p)}{\longrightarrow}\mathcal{O}_{\mathbb{P}^2}(1)
\end{equation*}
is quasi-isomorphic to $\mathcal{O}_x[-1]$, and its only nontrivial subcomplexes have dimension vectors $w$ equal to $(0,2,1),(0,1,1)$ and $(0,0,1)$; the inequalities $\theta_{\M,v}\cdot w>0$ give the above formula for $\mathpzc{S}_H^\circ$.

Notice that, after twisting by a line bundle, every sheaf of positive rank can be brought inside the region $\mathpzc{R}_H$. Hence it is enough to consider this region to describe all the moduli spaces $\M^\textup{ss}_{\mathbb{P}^2,H}(v)$ with $\rk v>0$.

We can now apply Corollary \ref{Cor: isom mod st/sp}:

\begin{theorem}\label{Thm: isom sh=qui on P2 (1)}
For any $v\in\mathpzc{R}_H$ we have isomorphisms
\begin{equation*}
\M^\textup{ss}_{\mathbb{P}^2,H}(v)\simeq\M^\textup{ss}_{B_3,J,\theta_{\G,v}}(d^v)\quad\textup{and}\quad
\M^\textup{st}_{\mathbb{P}^2,H}(v)\simeq \M^\textup{st}_{B_3,J,\theta_{\G,v}}(d^v)\,.
\end{equation*}
\end{theorem}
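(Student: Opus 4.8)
The plan is to deduce both isomorphisms directly from the general Corollary \ref{Cor: isom mod st/sp}, once I check that its hypotheses hold for $X=\mathbb{P}^2$, $A=H$, and the collection $\mathfrak{E}=(\mathcal{O}_{\mathbb{P}^2}(-1),\mathcal{O}_{\mathbb{P}^2},\mathcal{O}_{\mathbb{P}^2}(1))$. The monad-friendliness of $\mathfrak{E}$ (Definition \ref{Defn: mod-fr exc coll}) has already been observed in the discussion preceding the statement: assumption (A1) holds because the bundles $\mathcal{O}_{\mathbb{P}^2}(1),\uptau_{\mathbb{P}^2},\wedge^2\uptau_{\mathbb{P}^2}(-1)$ of ${}^\vee\!\mathfrak{E}$ are locally free and $H$-semistable (the two are line bundles and $\uptau_{\mathbb{P}^2}$ is slope-stable), while (A2) holds because the heart $\mathcal{K}$ consists exactly of the Kronecker complexes $V_{-1}\otimes\mathcal{O}_{\mathbb{P}^2}(-1)\to V_0\otimes\mathcal{O}_{\mathbb{P}^2}\to V_1\otimes\mathcal{O}_{\mathbb{P}^2}(1)$. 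The associated bound quiver is the Beilinson quiver $(B_3,J)$, with the equivalence $\Psi$ and the invariants $d^v,\theta_{\G,v}$ as computed above. It therefore only remains to verify that the region $\mathpzc{R}_H$ appearing in the statement is contained in the region $\tilde{\mathpzc{R}}_{H,\mathfrak{E}}=\mathpzc{R}^\textup{G}_H\cap\mathpzc{S}_H$ demanded by the corollary.

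To establish $\mathpzc{R}_H\subseteq\tilde{\mathpzc{R}}_{H,\mathfrak{E}}$ I would treat the two inclusions separately. For $\mathpzc{R}_H\subseteq\mathpzc{R}^\textup{G}_H$, note that by \eqref{Eq: sets RA,RGA} the region $\mathpzc{R}_H$ is cut out by slope inequalities and $\mathpzc{R}^\textup{G}_H$ by the corresponding reduced-Hilbert-polynomial inequalities; since reduced Hilbert polynomials of a fixed dimension all share the same leading coefficient, their $\prG$-comparison is governed by the next term, namely the slope, so the strict inequalities $\max_i\mu_H({}^\vee\!E_i\otimes\omega_X)<\mu_H(v)<\min_i\mu_H({}^\vee\!E_i)$ force the polynomial ones (this inclusion is in fact already recorded in \eqref{Eq: sets RA,RGA}). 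For the second inclusion I would invoke the identification $\mathpzc{R}_H=\mathpzc{S}_H^\circ$ carried out above: the explicit complex $K_x$ has, for every $x\in\mathbb{P}^2$, precisely the nontrivial subcomplexes of dimension vectors $(0,2,1),(0,1,1),(0,0,1)$, and requiring $\theta_{\M,v}\cdot w>0$ for these $w$ translates exactly into $|\deg v|<\rk v$, i.e.\ into $\nu_{\M,v}$-stability of $K_x$ for all $x$; together with $\mathpzc{S}_H^\circ\subseteq\mathpzc{S}_H$ from \eqref{Eq: sets SA,S0A} this gives $\mathpzc{R}_H=\mathpzc{S}_H^\circ\subseteq\mathpzc{S}_H$.

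Having checked $\mathpzc{R}_H\subseteq\tilde{\mathpzc{R}}_{H,\mathfrak{E}}$, the two claimed isomorphisms follow immediately by specializing Corollary \ref{Cor: isom mod st/sp} to $X=\mathbb{P}^2$, $A=H$, $(Q,J)=(B_3,J)$. There is no genuine obstacle at this stage, because all the analytic content—the $\Ext$-vanishings that push semistable objects into the opposite heart (Lemmas \ref{Lem: sem shvs are in K}--\ref{Lem: min KU has H-1=0} and Corollary \ref{Cor: ss KV monad}) and the two-way stability comparison (C2)—is subsumed in the general theorem. The only point specific to $\mathbb{P}^2$ is the slope/subcomplex bookkeeping identifying $\mathpzc{R}_H$ with $\mathpzc{S}_H^\circ$, which is the mildly computational step and has essentially been executed in the paragraphs preceding the statement; the rest of the proof is simply the confirmation that the corollary's hypotheses are met.
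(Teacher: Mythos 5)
Your proposal is correct and follows exactly the route the paper takes: the theorem is stated as a direct application of Corollary \ref{Cor: isom mod st/sp}, with the preceding discussion supplying monad-friendliness of $\mathfrak{E}$ and the identification $\mathpzc{R}_H=\mathpzc{S}_H^\circ=\{|\deg v|<\rk v\}$ via the subcomplexes of $K_x$, while the inclusions $\mathpzc{R}_H\subseteq\mathpzc{R}^\textup{G}_H$ and $\mathpzc{S}_H^\circ\subseteq\mathpzc{S}_H$ are the general ones recorded in \eqref{Eq: sets RA,RGA} and \eqref{Eq: sets SA,S0A}. Your only addition is to spell out the leading-coefficient argument for $\mathpzc{R}_H\subseteq\mathpzc{R}^\textup{G}_H$, which the paper leaves implicit.
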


Many of the known properties of $\M^\textup{ss}_{\mathbb{P}^2,H}(v)$ can be recovered from these isomorphisms. We observe for example that:
	\begin{enumerate}
	\item $v\in\mathpzc{R}_H$ is primitive if and only if $\gcd(\rk v,\deg v,\chi(v))=1$. In this case we have that $\M^\textup{ss}_{\mathbb{P}^2,H}(v)=\M^\textup{st}_{\mathbb{P}^2,H}(v)$ and there is a universal family, either by Remark \ref{Rmk: prop mod sp sh surf} or Remark \ref{Rmk: prop quiv mod}.\footnote{Notice that for $v\in\mathpzc{R}_H$ the arrays $\theta_{\M,v},\theta_{\chi,v}$ are linearly independent, so $d^v$ is $\theta_{\G,v}$-coprime by Remark \ref{Rmk: coprime dim vec}.}
	\item By general arguments about monads (see \cite[Lemma 4.1.7]{OSS80Ve} and \cite[Prop.\ 2.3 and 2.6]{DreLeP85Fibr} and note that the fact that a semistable sheaf $\mathcal{E}$ has $\Ext^2(\mathcal{E},\mathcal{E})=0$ is used), the variety $X_J\subset R_{d^v}(B_3)$ cut by the relations $J$ intersects the semistable locus $R_{d^v,\theta_{\G,v}}^\textup{ss}(B_3)$ in a smooth complete intersection. In particular $\M^\textup{st}_{\mathbb{P}^2,H}(v)\simeq\M^\textup{st}_{B_3,J,\theta_{\G,v}}(d^v)$ is smooth and we can compute its dimension as the dimension of the quotient $R_{d^v,\theta_{\G,v}}^\textup{st}(B_3)/PG_{d^v}$ minus the number $6d^v_{-1}d^v_1$ of relations imposed; the result is
	\begin{equation*}
	\dim\M^\textup{st}_{\mathbb{P}^2,H}(v)=1-\rk v^2+\Delta(v)\,,
	\end{equation*}
	in agreement with eq.\ \eqref{Eq: dim ms sh sur}.
	\item If $\theta_{\M,v}^{-1}>0$ or $\theta_{\M,v}^1<0$ then every $d^v$-dimensional representation is $\theta_{\G,v}$-unstable, so $\M^\textup{ss}_{B_3,J,\theta_{\G,v}}(d^v)$ is empty. But for all $v\in\mathpzc{R}_H$ we have
	\begin{equation*}
	\theta_{\M,v}^{-1}=-\rk v-\deg v<0\quad\textup{and} \quad\theta_{\M,v}^1=\rk v-\deg v>0\,.
	\end{equation*}
	\end{enumerate}

Notice also that the existence of a semistable sheaf $\mathcal{F}$ in $v\in\mathpzc{R}_H$ implies that all the components of the array $\dimvec v$ are nonnegative. Thus for example $2\ch_2v=-d^v_{-1}-d^v_1\leq 0$, with the equality holding only when $\mathcal{F}$ is trivial. From this simple observation we can easily deduce the Bogomolov inequality \eqref{Eq: Bogom ineq}:

\begin{proposition}\label{Prop: Bogom ineq from quiv mod}
If $\M^\textup{ss}_{\mathbb{P}^2,H}(v)\neq\emptyset$ for some $v\in K_0(\mathbb{P}^2)$, then
\begin{equation*}
\Delta(v):=(\deg v)^2-2\rk v \ch_2(v)\geq 0\,.
\end{equation*}
\end{proposition}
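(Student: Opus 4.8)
The plan is to reduce an arbitrary class to one lying in the region $\mathpzc{R}_H$, where the dimension-vector coordinates are manifestly nonnegative, and then to read off the inequality from a one-line sign computation.

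First I would dispose of the degenerate ranks. Any Gieseker-semistable sheaf is pure, so $\rk v\geq 0$; if $\rk v=0$ then $\Delta(v)=(\deg v)^2\geq 0$ and there is nothing to prove. Thus I may assume $\rk v>0$. Next I would reduce to the case $v\in\mathpzc{R}_H=\{|\deg v|<\rk v\}$. Since tensoring with $\mathcal{O}_{\mathbb{P}^2}(m)$ is an autoequivalence preserving Gieseker semistability, $\M^\textup{ss}_{\mathbb{P}^2,H}(v)$ is nonempty if and only if $\M^\textup{ss}_{\mathbb{P}^2,H}(v\cdot[\mathcal{O}_{\mathbb{P}^2}(m)])$ is; and a short computation with the Chern character shows that $\Delta$ is invariant under such a twist. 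As $\deg(v\cdot[\mathcal{O}_{\mathbb{P}^2}(m)])=\deg v+m\rk v$ runs through an arithmetic progression of step $\rk v\geq 1$, a suitable $m$ moves the class into the open strip $|\deg|<\rk$. Hence we may assume $v\in\mathpzc{R}_H$.

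The heart of the argument is then immediate. Because $\mathpzc{R}_H\subseteq\mathpzc{R}^\textup{G}_H$, Lemma \ref{Lem: sem shvs are in K} applies and a Gieseker-semistable $\mathcal{F}\in v$ lies in the heart $\mathcal{K}$; consequently $\Psi(\mathcal{F})$ is a genuine representation of $(B_3,J)$ of dimension vector $d^v$, so each $d^v_i\geq 0$. Combining the base-change matrix \eqref{Eq: trans coord K0(P2)} with $\chi(v)=\rk v+\tfrac32\deg v+\ch_2 v$ gives $2\ch_2 v=-(d^v_{-1}+d^v_1)\leq 0$, whence
\[
\Delta(v)=(\deg v)^2-2\rk v\,\ch_2 v=(\deg v)^2+\rk v\,(d^v_{-1}+d^v_1)\geq 0 .
\]

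I expect the only delicate step to be the reduction to $\mathpzc{R}_H$: one must check that the twist preserves both the (non)emptiness of the moduli space and the value of $\Delta$, and that $\mathpzc{R}_H\subseteq\mathpzc{R}^\textup{G}_H$ so that Lemma \ref{Lem: sem shvs are in K} is applicable. Everything downstream is just the nonnegativity of vector-space dimensions together with the single linear identity $2\ch_2 v=-(d^v_{-1}+d^v_1)$.
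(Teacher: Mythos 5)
Your proof is correct and follows essentially the same route as the paper's: dispose of rank zero, twist by a line bundle (noting $\Delta$ is twist-invariant) to move $v$ into $\mathpzc{R}_H$, and then read off $2\ch_2 v=-(d^v_{-1}+d^v_1)\leq 0$ from the nonnegativity of the dimension vector of the associated quiver representation. You merely make explicit the details (the identity $2\ch_2 v=-(d^v_{-1}+d^v_1)$ and the applicability of Lemma \ref{Lem: sem shvs are in K}) that the paper records in the observation immediately preceding the proposition.
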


\begin{proof}
For $\rk v=0$ the statement is obvious. If $\rk v>0$, then after twisting by a line bundle (which does not change the discriminant $\Delta$) we can reduce to the case $v\in\mathpzc{R}_H$: for such $v$ we have just observed that $\ch_2 v\leq0$, and hence $\Delta(v)\geq0$.
\end{proof}

Finally, observe that whether a class $v\in K_0(\mathbb{P}^2)$ belongs to the region $\mathpzc{R}_H$ only depends on the ray generated by the Hilbert polynomial $P_{v,H}$ in $\mathbb{R}[t]_{\leq 2}$. Thus we can extend the equivalence of Thm \ref{Thm: C1,C2 on surf} to whole abelian categories of semistable sheaves with fixed reduced Hilbert polynomial:

\begin{theorem}\label{Thm: equiv ab cat semist sh and quiv rep P2}
If $p\in\mathbb{R}[t]$ is the Hilbert polynomial of a class $v\in\mathpzc{R}_H$, then $\Psi$ restricts to an equivalence between the abelian categories $\mathcal{S}_H(p)$ and $\mathcal{S}_{\theta_{\G,v}}$ (defined in Equations \eqref{Eq: cat semis coh shvs} and \eqref{Eq: cat semis quiv rep}).
\end{theorem}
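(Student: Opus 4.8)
The plan is to deduce this ``family'' statement from the class-by-class equivalence of Theorem~\ref{Thm: C1,C2 on surf} by noting that fixing the reduced Hilbert polynomial $p$ amounts to fixing a single ray in $K_0(\mathbb{P}^2)\otimes\mathbb{R}$, along which all the relevant data are compatible. Concretely, an object of $\mathcal{S}_H(p)$ is a Gieseker-semistable sheaf $\mathcal{F}$ with $p_{\mathcal{F},H}=p$; writing $v'=[\mathcal{F}]$, the equality $p_{v',H}=p$ says that $P_{v',H}$ is a positive multiple of $P_{v,H}$. Since on $\mathbb{P}^2$ the map $w\mapsto P_{w,H}$ is injective on $K_0(\mathbb{P}^2)\otimes\mathbb{R}$ (the coefficients of $P_{w,H}$ recover $\rk w,\deg w,\chi(w)$ in turn), this forces $v'=c_{v'}\,v$ in $K_0(\mathbb{P}^2)\otimes\mathbb{R}$ for some $c_{v'}>0$. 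By the observation preceding the theorem, membership in $\mathpzc{R}_H$ depends only on the ray generated by $P_{\cdot,H}$, so every such $v'$ again lies in $\mathpzc{R}_H\subseteq\tilde{\mathpzc{R}}_{H,\mathfrak{E}}$, and Theorem~\ref{Thm: C1,C2 on surf} applies to each of them.

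Next I would match the two sides along this ray. On the quiver side, bilinearity of $\sigma_{\G}$ gives $\theta_{\G,v'}=c_{v'}\,\theta_{\G,v}$, and since $\theta$-(semi)stability is unchanged under scaling the weight by a positive constant, a representation of dimension $d^{v'}$ is $\theta_{\G,v'}$-(semi)stable if and only if it is $\theta_{\G,v}$-(semi)stable. Conversely, a nonzero $\theta_{\G,v}$-semistable representation $V$ satisfies $\theta_{\G,v}\cdot\dimvec V=\sigma_{\G}(v,w)=0$ for $w:=\psi^{-1}(\dimvec V)$; since $\rk v>0$, vanishing of both coefficients of $\sigma_{\G}(v,w)=t\sigma_{\M}(v,w)+\sigma_\chi(v,w)$ forces $\deg w\,\rk v=\deg v\,\rk w$ and $\chi(w)\,\rk v=\chi(v)\,\rk w$, hence $w$ proportional to $v$ with $\rk w>0$ (a rank-zero solution would give $w=0$). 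Thus $\dimvec V=d^{v'}$ for some $v'$ on the ray, and $\mathcal{S}_{\theta_{\G,v}}$ is exactly $\{0\}$ together with the $\theta_{\G,v'}$-semistable representations of dimension $d^{v'}$, as $v'$ ranges over the ray; the same bookkeeping writes $\mathcal{S}_H(p)$ as $\{0\}$ together with the Gieseker-semistable sheaves of class $v'$.

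Finally I would assemble the equivalence using the heart equivalence. By construction $\Psi$ restricts to an exact equivalence of abelian categories $\mathcal{K}\xrightarrow{\sim}\Rep^\textup{fd}_\mathbb{C}(Q;J)$, since $\mathcal{K}=\Psi^{-1}(\Rep^\textup{fd}_\mathbb{C}(Q;J))$. Condition (C1) places every object of $\mathcal{S}_H(p)$ inside $\mathcal{K}$, so $\mathcal{S}_H(p)$ is a full, extension-closed subcategory of $\mathcal{K}$ on which $\Psi$ is automatically exact and fully faithful. Applying Theorem~\ref{Thm: C1,C2 on surf} to each $v'$, together with the weight identification of the previous step, shows that $\Psi$ sends $\mathcal{S}_H(p)$ into $\mathcal{S}_{\theta_{\G,v}}$; for essential surjectivity one runs the reverse direction, where given $V\in\mathcal{S}_{\theta_{\G,v}}$ of dimension $d^{v'}$, Corollary~\ref{Cor: ss KV monad} and condition (C1) put $\Psi^{-1}(V)$ in $\mathcal{C}$ as a Gieseker-semistable sheaf of class $v'$, i.e.\ in $\mathcal{S}_H(p)$. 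Full faithfulness inherited from the heart equivalence then finishes the argument.

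I expect the only genuine content to be the bookkeeping of the second paragraph: recognizing that $p$ determines a single ray, that the weights $\theta_{\G,v'}$ along it are mutually positive multiples and hence define one and the same semistable subcategory, and that $\sigma_{\G}(v,-)=0$ pins the dimension vectors of $\theta_{\G,v}$-semistable representations precisely onto that ray. Once this is settled, the equivalence follows formally from the heart equivalence $\mathcal{K}\simeq\Rep^\textup{fd}_\mathbb{C}(Q;J)$ and the already-established compatibility conditions (C1) and (C2), with no further geometric input.
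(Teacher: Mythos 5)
Your overall strategy coincides with the paper's: fix the ray in $K_0(\mathbb{P}^2)\otimes\mathbb{R}$ determined by $p$, check that every class $v'$ on it lies in $\mathpzc{R}_H$ and that the weights $\theta_{\G,v'}$ are positive multiples of $\theta_{\G,v}$, and then apply Theorem~\ref{Thm: C1,C2 on surf} class by class. The first and third paragraphs, and the ``$\mathcal{S}_H(p)\subset\mathcal{S}_{\theta_{\G,v}}$'' half of the second, are fine.

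There is, however, one genuine gap, and it sits exactly where the paper's proof does its only real work. From $\sigma_{\G}(v,w)=0$ you correctly deduce that $w$ is proportional to $v$, but you then assert ``with $\rk w>0$'', disposing only of the rank-zero degenerate case in the parenthetical. Proportionality also allows $w$ to be a \emph{negative} multiple of $v$, and this is not excluded by the requirement that $\dimvec V=d^{w}$ be an honest dimension vector: for example $v=(\rk,\deg,\chi)=(1,0,2)$ lies in $\mathpzc{R}_H$ and has $d^v=(-1,-1,-1)$ by Eq.~\eqref{Eq: trans coord K0(P2)}, so $d^{-v}=(1,1,1)$ is a perfectly legitimate dimension vector, and a priori $\mathcal{S}_{\theta_{\G,v}}$ could contain representations supported there. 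Such representations would correspond to no Gieseker-semistable sheaf (their class has negative rank), so your description of $\mathcal{S}_{\theta_{\G,v}}$ as swept out by the $d^{v'}$ with $v'$ on the positive ray --- and with it the essential surjectivity of $\Psi|_{\mathcal{S}_H(p)}$ --- is not yet established. The paper handles precisely this point: if the proportionality constant were negative, then $-w\in\mathpzc{R}_H$, hence $\theta_{\M,w}^{-1}>0$ and $\theta_{\M,w}^{1}<0$, which by the observation following Theorem~\ref{Thm: isom sh=qui on P2 (1)} is incompatible with the existence of a semistable representation in the class $w$. Some argument of this kind, ruling out semistable representations in classes on the negative half of the ray, must be added to your proof.
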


\begin{proof}
Identify $\mathcal{S}_{\theta_{\G,v}}$ with the category of $\nu_{\textup{G},v}$-semistable Kronecker complexes, via $\Psi$. The inclusion $\mathcal{S}_H(p)\subset\mathcal{S}_{\theta_{\G,v}}$ is clear, as any nonzero $\mathcal{F}\in\mathcal{S}_H(p)$ has class $[\mathcal{F}]\in\mathpzc{R}_H$. For the converse, take a $\theta_{\G,v}$-semistable representation $(V,f)$ of $(B_3,J)$, and let $w:=\psi^{-1}[V,f]\in K_0(\mathbb{P}^2)$. By definition of $\theta_{\G,v}$-stability we have $\sigma_{\G}(v,w)=0$ and then $pP_{w,H}'-p'P_{w,H}=0$, which by Remark \ref{Rmk: pq'=p'q impl prop} implies that $P_{w,H}=\alpha p$ for some $\alpha\in\mathbb{R}$. In fact, $\alpha\neq 0$ since $P_{w,H}\neq 0$ by Eq.\ \eqref{Eq: trans coord K0(P2)}, and $\alpha$ cannot be negative because otherwise we would have $-P_{w,H}\in\mathpzc{R}_H$, and then $\theta_{\M,w}^{-1}>0$ and $\theta_{\M,w}^1<0$, which (as observed above) would contradict the existence of the semistable representation $(V,f)$ in $w$. Thus $w\in\mathpzc{R}_H$, and hence $\Psi^{-1}(V,f)\in\mathcal{S}_H(p)$ by Thm \ref{Thm: C1,C2 on surf}. This concludes the proof that $\mathcal{S}_H(p)\supset\mathcal{S}_{\theta_{\G,v}}$.
\end{proof}

\subsection{The second equivalence}
Now we will use instead the full strong collections
\begin{equation*}
\begin{array}{c}
\mathfrak{E}'=(E'_{-1},E'_0,E'_1)=(\Omhol_{\mathbb{P}^2}^2(2),\Omhol_{\mathbb{P}^2}^1(1),\mathcal{O}_{\mathbb{P}^2})\,,\\
{}^\vee\!\mathfrak{E}'=({}^\vee\!E'_1,{}^\vee\!E'_0,{}^\vee\!E'_{-1})=(\mathcal{O}_{\mathbb{P}^2},\mathcal{O}_{\mathbb{P}^2}(1),\mathcal{O}_{\mathbb{P}^2}(2))\,.
\end{array}
\end{equation*}
The tilting sheaf $T'=\mathcal{O}_{\mathbb{P}^2}\oplus\mathcal{O}_{\mathbb{P}^2}(1)\oplus\mathcal{O}_{\mathbb{P}^2}(2)$ has endomorphism algebra
\begin{equation*}
\End_{\mathcal{O}_{\mathbb{P}^2}}(T')=\begin{pmatrix}
\mathbb{C} &  & \\ 
Z^\vee & \mathbb{C} & \\ 
S^2Z^\vee & Z^\vee & \mathbb{C}
\end{pmatrix}
\end{equation*}
which is identified, after fixing a $\mathbb{C}$-basis $e_0,e_1,e_2$ of $Z$, to the opposite of the bound quiver algebra $\mathbb{C}B_3/J'$, where now $J'=(b_ia_j-b_ja_i,\ i,j=1,2,3)$. The new equivalence
\begin{equation*}
\Psi':=\Phi_{{}^\vee\!\mathfrak{E}'}[1]:D^b(\mathbb{P}^2)\to D^b(B_3;J')
\end{equation*}
sends a complex $\mathcal{F}^\bullet\in D^b(\mathbb{P}^2)$ to the complex of representations
\begin{equation*}
R\Hom_{\mathcal{O}_{\mathbb{P}^2}}(\mathcal{O}_{\mathbb{P}^2}(2),\mathcal{F}^\bullet)[1]\triplerightarrow
R\Hom_{\mathcal{O}_{\mathbb{P}^2}}(\mathcal{O}_{\mathbb{P}^2}(1),\mathcal{F}^\bullet)[1]\triplerightarrow
R\Hom_{\mathcal{O}_{\mathbb{P}^2}}(\mathcal{O}_{\mathbb{P}^2},\mathcal{F}^\bullet)[1]
\end{equation*}
and the standard heart of $D^b(B_3;J')$ is now sent to the heart
\begin{equation*}
\mathcal{K}':=\langle\Omhol_{\mathbb{P}^2}^2(2)[1],\Omhol_{\mathbb{P}^2}^1(1),\mathcal{O}_{\mathbb{P}^2}[-1]\rangle_{\ext}
\end{equation*}
whose objects are complexes
\begin{equation*}
K'_V:V_{-1}\otimes\Omhol_{\mathbb{P}^2}^2(2)\longrightarrow V_0\otimes\Omhol_{\mathbb{P}^2}^1(1)\longrightarrow V_1\otimes\mathcal{O}_{\mathbb{P}^2}
\end{equation*}
with the middle term in degree $0$. These are the Kronecker complexes originally used in \cite{DreLeP85Fibr}, and we see again that $\mathfrak{E}'$ is monad-friendly with respect to $A$ (Def.\ \ref{Defn: mod-fr exc coll}).

$\Psi'$ induces a different isomorphism $\psi':K_0(\mathbb{P}^2)\to K_0(B_3;J')$. Given $v\in K_0(\mathbb{P}^2)$, we write now
\begin{equation*}
({d'}^v_{-1},{d'}^v_0,{d'}^v_1)={d'}^v:=\dimvec\psi'(v)
\end{equation*}
for the coordinates with respect to the basis of simple representations $S(-1),S(0),S(1)$; these are mapped to the objects $\Omhol_{\mathbb{P}^2}^2(2)[1]\simeq\mathcal{O}_{\mathbb{P}^2}(-1)[1],\Omhol_{\mathbb{P}^2}^1(1),\mathcal{O}_{\mathbb{P}^2}[-1]$, for which the triple $(\rk,\deg,\chi)$ is equal to $(-1,1,0),$ $(2,-1,0)$ and $(-1,0,-1)$ respectively. This gives the linear transformations
\begin{equation}\label{Eq: trans coord K0(P2), 2}
\begin{pmatrix}
{d'}^v_{-1}\\
{d'}^v_0\\
{d'}^v_1
\end{pmatrix}=
\begin{pmatrix}
1 &2 &-1\\
1 &1 &-1\\
0& 0& -1
\end{pmatrix}
\begin{pmatrix}
\rk v\\
\deg v\\
\chi(v)
\end{pmatrix}\,,\quad
\begin{pmatrix}
\rk v\\
\deg v\\
\chi(v)
\end{pmatrix}=
\begin{pmatrix}
-1& 2& -1\\
1 &-1 &0\\
0 &0 &-1
\end{pmatrix}
\begin{pmatrix}
{d'}^v_{-1}\\
{d'}^v_0\\
{d'}^v_1
\end{pmatrix}\,.
\end{equation}

For $v\in K_0(\mathbb{P}^2)$ define now $\theta'_{\G,v}=t\theta'_{\M,v}+\theta'_{\chi,v}$ by
\begin{equation*}
\nu_{\M,v}(w)=\sigma_{\M}(v,w)=\theta'_{\M,v}\cdot{d'}^w\,,\quad
\nu_{\chi,v}(w)=\sigma_\chi(v,w)=\theta'_{\chi,v}\cdot{d'}^w\,,
\end{equation*}
where the new arrays $\theta'_{\M,v},\theta'_{\chi,v}\in\mathbb{Z}^{\{-1,0,1\}}$ are given by
\begin{equation}\label{Eq: thetas on P2, 2}
\begin{array}{c}
\theta'_{\M,v}=
\begin{pmatrix}
-\rk v-\deg v\\
2\deg v+\rk v\\
-\deg v
\end{pmatrix}=
\begin{pmatrix}
-{d'}^v_0+{d'}^v_1\\
{d'}^v_{-1}-{d'}^v_1\\
-{d'}^v_{-1}+{d'}^v_0
\end{pmatrix}\,,\\
\theta'_{\chi,v}=
\begin{pmatrix}
-\chi(v)\\
2\chi(v)\\
\rk v-\chi(v)
\end{pmatrix}=
\begin{pmatrix}
{d'}^v_1\\
-2{d'}^v_1\\
-{d'}^v_{-1}+2{d'}^v_0
\end{pmatrix}\,.
\end{array}
\end{equation}
The regions of interest are now
\begin{equation*}
\begin{array}{c}
\mathpzc{R}_H{}'=\mathpzc{S}_H^\circ{}'=\{v\in K_0(\mathbb{P}^2)\ |\ 0<-\deg v<\rk v\}=\{v\in K_0(\mathbb{P}^2)\ |\ {d'}^v_0>{d'}^v_{-1}\textup{ and }{d'}^v_0>{d'}^v_1\}\,,\\
\mathpzc{R}^\textup{G}_H{}'=\{v\in K_0(\mathbb{P}^2)\ |\ -t\rk v<t\deg v+\chi(v)<\rk v\}\,,\\
\mathpzc{S}_H{}'=\{v\in K_0(\mathbb{P}^2)\ |\ -(t+1)\rk v<t\deg v+\chi(v)<\rk v\,,\ \ \deg v\neq-\rk v\}\,.
\end{array}
\end{equation*}
To find these expressions for $\mathpzc{S}_H^\circ{}'$ and $\mathpzc{S}_H{}'$ we observe that for any $x\in\mathbb{P}^2$ we can take a section $s\in H^0(\Omhol_{\mathbb{P}^2}^1(1))$ whose zero locus is $x$, and define the Kronecker complex
\begin{equation*}
K'_x:\Omhol_{\mathbb{P}^2}^2(2)\overset{\iota_s}{\longrightarrow}\Omhol_{\mathbb{P}^2}^1(1)\overset{\iota_s}{\longrightarrow}\mathcal{O}_{\mathbb{P}^2}\,,
\end{equation*}
so that $K'_x\simeq\mathcal{O}_x[-1]$. Then we notice that the nontrivial quotients $Q$ of $K'_x$ have dimensions $(1,1,0)$ and $(1,0,0)$, and only for the latter we have $H^{-1}_\mathcal{C}(Q)\neq 0$.

This time the cone $\mathpzc{R}_H{}'$ is not wide enough to describe all moduli spaces for positive rank: if a torsion-free sheaf $\mathcal{F}$ has $\mu_H(\mathcal{F})\in\mathbb{Z}$, then no twist of it is in $\mathpzc{R}_H{}'$. However, if $\mathcal{F}$ is non-trivial and Gieseker-semistable, then it has a twist $\mathcal{F}(k)$ of zero slope and $\chi(\mathcal{F}(k))=\rk\mathcal{F}+3\deg_H\mathcal{F}/2+\ch_2\mathcal{F}<\rk\mathcal{F}$ (because $\ch_2\mathcal{F}<0$, as observed just before Prop.\ \ref{Prop: Bogom ineq from quiv mod}), so that it is contained in
\begin{equation*}
\tilde{\mathpzc{R}}_{H,\mathfrak{E}'}=\mathpzc{R}^\textup{G}_H{}'\cap\mathpzc{S}_H{}'=\{v\in K_0(\mathbb{P}^2)\ |\ -t\rk v<t\deg v+\chi(v)<\rk v\,,\ \ \deg v\neq-\rk v\}\,.
\end{equation*}

So we can apply Corollary \ref{Cor: isom mod st/sp} to the collection $\mathfrak{E}'$:

\begin{theorem}\label{Thm: isom sh=qui on P2 (2)}
Let $v\in\tilde{\mathpzc{R}}_{H,\mathfrak{E}'}$. Then we have isomorphisms
\begin{equation*}
\M^\textup{ss}_{\mathbb{P}^2,H}(v)\simeq\M^\textup{ss}_{B_3,J',\theta'_{\G,v}}({d'}^v)\quad\textup{and}\quad 
\M^\textup{st}_{\mathbb{P}^2,H}(v)\simeq\M^\textup{st}_{B_3,J',\theta'_{\G,v}}({d'}^v)\,.
\end{equation*}
\end{theorem}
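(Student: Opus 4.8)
The plan is to obtain both isomorphisms as a direct instance of Corollary \ref{Cor: isom mod st/sp} applied to the collection $\mathfrak{E}'$. That Corollary requires exactly two inputs: that $\mathfrak{E}'$ be monad-friendly in the sense of Definition \ref{Defn: mod-fr exc coll}, and that the class $v$ lie in the conical region $\tilde{\mathpzc{R}}_{H,\mathfrak{E}'}=\mathpzc{R}^\textup{G}_H{}'\cap\mathpzc{S}_H{}'$ of \eqref{Eq: sets RA,RGA}--\eqref{Eq: sets SA,S0A}, which is precisely the hypothesis of the theorem. Once these are verified, the Corollary yields the isomorphisms of moduli stacks, and hence of the coarse moduli spaces $\M^\textup{ss}_{\mathbb{P}^2,H}(v)\simeq\M^\textup{ss}_{B_3,J',\theta'_{\G,v}}({d'}^v)$ and of their stable loci, verbatim; so all the work lies in checking the two hypotheses.

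First I would confirm monad-friendliness. Assumption (A1) asks that ${}^\vee\!E'_i=\mathcal{O}_{\mathbb{P}^2},\mathcal{O}_{\mathbb{P}^2}(1),\mathcal{O}_{\mathbb{P}^2}(2)$ be locally free and Gieseker-semistable for $H$, which is immediate since line bundles are slope-stable and hence Gieseker-stable. Assumption (A2) asks that every object of $\mathcal{K}'$ be represented by a complex of vector bundles in degrees $-1,0,1$; following the remark after Definition \ref{Defn: mod-fr exc coll}, this follows from the block structure of the generators $\Omhol_{\mathbb{P}^2}^2(2)[1],\Omhol_{\mathbb{P}^2}^1(1),\mathcal{O}_{\mathbb{P}^2}[-1]$ of $\mathcal{K}'$: each block is a single bundle placed in one of the degrees $-1,0,1$ with vanishing higher Ext between them, so that $\Psi'^{-1}$ carries a representation $V$ of $(B_3,J')$ to the Kronecker complex $K'_V$ with term $V_i\otimes E'_i$ in the corresponding degree. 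Thus $\mathfrak{E}'$ is monad-friendly with respect to $H$.

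The substantive step is the identification of $\tilde{\mathpzc{R}}_{H,\mathfrak{E}'}$. For $\mathpzc{R}^\textup{G}_H{}'$ one translates the reduced-Hilbert-polynomial inequalities $\max_i p_{{}^\vee\!E'_i\otimes\omega_X,H}<p_{v,H}<\min_i p_{{}^\vee\!E'_i,H}$ via Riemann--Roch \eqref{Eq: Hilb pol sh sur} and $\omega_{\mathbb{P}^2}=\mathcal{O}_{\mathbb{P}^2}(-3)$, producing $-t\,\rk v<t\deg v+\chi(v)<\rk v$. For $\mathpzc{S}_H{}'$ the key object is the explicit complex $K'_x:\Omhol_{\mathbb{P}^2}^2(2)\overset{\iota_s}{\to}\Omhol_{\mathbb{P}^2}^1(1)\overset{\iota_s}{\to}\mathcal{O}_{\mathbb{P}^2}$ with $K'_x\simeq\mathcal{O}_x[-1]$; using \eqref{Eq: trans coord K0(P2), 2} one checks it has dimension vector $(1,1,1)$, its proper nonzero quotients have dimension vectors $(1,1,0)$ and $(1,0,0)$, and only the latter has $H^{-1}_\mathcal{C}\neq0$. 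Imposing $\nu_{\G,v}(S)>0$ on the subobjects and $H^{-1}_\mathcal{C}(Q)=0$ on the zero-$\nu_{\M,v}$ quotients, and evaluating $\theta'_{\M,v},\theta'_{\chi,v}$ from \eqref{Eq: thetas on P2, 2} on these vectors, yields the stated description of $\mathpzc{S}_H{}'$; in particular the condition on the quotient $(1,0,0)$ reads $\theta'^{-1}_{\M,v}=-\rk v-\deg v\neq0$, which is exactly the exclusion $\deg v\neq-\rk v$ marking the locus where $K'_x$ would fail to be $\nu_{\M,v}$-stable.

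The main obstacle is this last region computation rather than the application of the Corollary, which is formal. Unlike the first collection, here $\mathpzc{R}_H{}'$ is too narrow to reach classes of integral slope, so one is forced to work with the larger Gieseker region $\mathpzc{R}^\textup{G}_H{}'$ and with $\mathpzc{S}_H{}'$ (not $\mathpzc{S}_H^\circ{}'$), which is why the careful subobject/quotient bookkeeping for $K'_x$ and the precise $\deg v\neq-\rk v$ exclusion matter. Establishing that every nontrivial Gieseker-semistable sheaf of positive rank can be twisted into $\tilde{\mathpzc{R}}_{H,\mathfrak{E}'}$ then uses the inequality $\ch_2\mathcal{F}<0$ noted before Proposition \ref{Prop: Bogom ineq from quiv mod}, guaranteeing $\chi(\mathcal{F}(k))<\rk\mathcal{F}$ for a slope-zero twist. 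With the hypotheses so verified, Corollary \ref{Cor: isom mod st/sp} applies and gives the theorem.
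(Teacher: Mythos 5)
Your proposal is correct and follows exactly the paper's route: the paper likewise verifies that $\mathfrak{E}'$ is monad-friendly, computes the regions $\mathpzc{R}^\textup{G}_H{}'$ and $\mathpzc{S}_H{}'$ via the complex $K'_x$ (whose quotient of dimension vector $(1,0,0)$ produces the exclusion $\deg v\neq-\rk v$), and then states the theorem as an immediate instance of Corollary \ref{Cor: isom mod st/sp}. The only nuance is that your closing gloss on $\deg v=-\rk v$ as ``the locus where $K'_x$ fails to be $\nu_{\M,v}$-stable'' is slightly imprecise ($\nu_{\M,v}$-stability of $K'_x$ also fails elsewhere, which is exactly why $\mathpzc{S}_H{}'$ rather than $\mathpzc{S}^\circ_H{}'$ is used), but this does not affect the argument.
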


Moreover, remarks analogous to those at the end of the previous subsection apply to this situation, and similarly we also deduce an equivalence of abelian categories of semistable objects:

\begin{theorem}
If $p\in\mathbb{R}[t]$ is the Hilbert polynomial of a class $v\in\tilde{\mathpzc{R}}_{H,\mathfrak{E}'}$, then $\Psi$ restricts to an equivalence between the abelian categories $\mathcal{S}_H(p)$ and $\mathcal{S}_{\theta'_{\G,v}}$ (defined in Equations \eqref{Eq: cat semis coh shvs} and \eqref{Eq: cat semis quiv rep}).
\end{theorem}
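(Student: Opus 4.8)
The plan is to transcribe the proof of Theorem~\ref{Thm: equiv ab cat semist sh and quiv rep P2}, replacing the data $(\mathfrak{E},\psi,\theta_{\G,v},\mathpzc{R}_H)$ attached to the first collection by the primed data $(\mathfrak{E}',\psi',\theta'_{\G,v},\tilde{\mathpzc{R}}_{H,\mathfrak{E}'})$ of this subsection and working throughout with the equivalence $\Psi'$. After using $\Psi'$ to identify $\mathcal{S}_{\theta'_{\G,v}}$ with the category of $\nu_{\G,v}$-semistable Kronecker complexes $K'_V$, the statement amounts to the two inclusions $\Psi'(\mathcal{S}_H(p))\subseteq\mathcal{S}_{\theta'_{\G,v}}$ and $\Psi'(\mathcal{S}_H(p))\supseteq\mathcal{S}_{\theta'_{\G,v}}$.

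First I would record that whether a class $w\in K_0(\mathbb{P}^2)$ lies in $\tilde{\mathpzc{R}}_{H,\mathfrak{E}'}$ depends only on the ray spanned by $P_{w,H}$. Reading off $\tilde{\mathpzc{R}}_{H,\mathfrak{E}'}=\{-t\rk w<t\deg w+\chi(w)<\rk w,\ \deg w\neq-\rk w\}$, each defining condition is homogeneous of degree one in $(\rk w,\deg w,\chi(w))$ and is preserved under multiplication by a positive real; since the coefficients of $P_{w,H}$ recover this triple, two classes with positively proportional Hilbert polynomials belong to $\tilde{\mathpzc{R}}_{H,\mathfrak{E}'}$ together. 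The forward inclusion is then immediate: a nonzero $\mathcal{F}\in\mathcal{S}_H(p)$ has class $w=[\mathcal{F}]$ with $P_{w,H}$ a positive multiple of $p$, so $w\in\tilde{\mathpzc{R}}_{H,\mathfrak{E}'}$; applying Theorem~\ref{Thm: C1,C2 on surf} to the class $w$ shows $\mathcal{F}$ is $\sigma_{\G}$-semistable in $\mathcal{K}'$, and as $\sigma_{\G}(w,\cdot)$ is a positive multiple of $\sigma_{\G}(v,\cdot)$ this is exactly $\theta'_{\G,v}$-semistability.

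For the reverse inclusion take $(V,f)\in\mathcal{S}_{\theta'_{\G,v}}$ and set $w:=(\psi')^{-1}[V,f]$. By definition of $\mathcal{S}_{\theta'_{\G,v}}$ we have $\theta'_{\G,v}\cdot{d'}^w=\sigma_{\G}(v,w)=0$, hence $\sigma_{\M}(v,w)=\sigma_\chi(v,w)=0$; since $\rk v>0$, the computation in the proof of Lemma~\ref{Lem: Gies,slope-stab and sigma} gives $P_{v,H}P'_{w,H}-P'_{v,H}P_{w,H}=0$, so Remark~\ref{Rmk: pq'=p'q impl prop} yields $P_{w,H}=\alpha\,p$ for some $\alpha\in\mathbb{R}$, with $\alpha\neq0$ because $(V,f)\neq0$ forces $P_{w,H}\neq0$ through \eqref{Eq: trans coord K0(P2), 2}. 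Granting $\alpha>0$, the ray-invariance gives $w\in\tilde{\mathpzc{R}}_{H,\mathfrak{E}'}$ and Theorem~\ref{Thm: C1,C2 on surf} identifies ${\Psi'}^{-1}(V,f)$ with a Gieseker-semistable sheaf whose reduced Hilbert polynomial is proportional to $p$, i.e.\ an object of $\mathcal{S}_H(p)$, which closes the argument.

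The one genuinely new point, and the main obstacle, is the positivity $\alpha>0$. For $\mathfrak{E}$ this was settled by the strict signs $\theta_{\M,v}^{-1}<0$, $\theta_{\M,v}^{1}>0$ on $\mathpzc{R}_H$ together with observation~(3) of \S\ref{Appl to P2}. For $\mathfrak{E}'$ one computes instead ${\theta'}^{-1}_{\M,v}=-\rk v-\deg v<0$ but only ${\theta'}^{1}_{\M,v}=-\deg v\geq0$, with equality exactly on the locus $\deg v=0$, so the leading $\sigma_{\M}$-part alone no longer excludes $\alpha<0$ there; this is precisely why $\tilde{\mathpzc{R}}_{H,\mathfrak{E}'}$ carries the refined $\chi$-level conditions inherited from $\mathpzc{S}_H{}'$ and the exclusion $\deg v\neq-\rk v$. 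I would therefore run the sign analysis lexicographically on $\theta'_{\G,v}=t\theta'_{\M,v}+\theta'_{\chi,v}$, using $\theta'_{\chi,v}$ from \eqref{Eq: thetas on P2, 2} to break the tie when $\deg v=0$; the cleanest packaging is to observe that a $\sigma_{\G}$-semistable $K'_w$ is forced to be a monad, hence an honest sheaf of non-negative rank, so that $\rk w=\alpha\rk v\geq0$ forces $\alpha>0$ — the care being that Corollary~\ref{Cor: ss KV monad} presupposes $w\in\mathpzc{S}_H{}'$, so one must first rule out the negative-rank (dually, $-w\in\tilde{\mathpzc{R}}_{H,\mathfrak{E}'}$) alternative using the self-duality of $K'_x$ and the strictness just discussed. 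Once $\alpha>0$ is in hand the proof concludes as above.
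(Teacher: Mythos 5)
Your proposal follows the route the paper intends (the paper gives no separate proof of this statement, deferring to the argument for Theorem \ref{Thm: equiv ab cat semist sh and quiv rep P2}), and your two structural points --- that membership in $\tilde{\mathpzc{R}}_{H,\mathfrak{E}'}$ depends only on the ray of $P_{v,H}$, and that the only non-mechanical step is the positivity of $\alpha$ --- are exactly right. Your diagnosis of the sign degeneration is also correct and is genuine added value: $\tilde{\mathpzc{R}}_{H,\mathfrak{E}'}$ really does contain classes with $\deg v=0$ (e.g.\ ideal sheaves of points), where ${\theta'}^{1}_{\M,v}=-\deg v$ vanishes, so the phrase ``$\theta_{\M,w}^{-1}>0$ and $\theta_{\M,w}^{1}<0$'' does not transcribe verbatim. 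Two remarks on your fix. First, the template only needs one strict sign, and ${\theta'}^{-1}_{\M,v}=-\rk v-\deg v<0$ holds strictly on all of $\tilde{\mathpzc{R}}_{H,\mathfrak{E}'}$ (this is precisely where the exclusion $\deg v\neq-\rk v$ enters); alternatively your lexicographic refinement does close the gap at vertex $1$, since $\deg v=0$ forces $\chi(v)<\rk v$ in the region, whence ${\theta'}^{1}_{\G,v}=\rk v-\chi(v)>0$. Second, and more importantly, your worry that Corollary \ref{Cor: ss KV monad} ``presupposes $w\in\mathpzc{S}_H{}'$'' --- which would make your preferred monad argument circular --- is unfounded: the hypotheses of Lemmas \ref{Lem: max KW has H1=0} and \ref{Lem: min KU has H-1=0} constrain only the weight $\nu_{\G,v}$, i.e.\ the fixed class $v\in\mathpzc{S}_H{}'$, and their proofs apply verbatim to any $\nu_{\G,v}$-semistable object $K\in\mathcal{K}'$ with $\nu_{\G,v}(K)=0$, whatever its class $w$. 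Hence a $\theta'_{\G,v}$-semistable representation of class $w$ is automatically a monad, so $\Psi'^{-1}$ of it is a nonzero coherent sheaf, $P_{w,H}\in\mathbb{R}[t]_+$, and $\alpha>0$; no preliminary exclusion of the negative-rank case via self-duality of $K'_x$ is needed. With that repair your argument is complete and matches the intended one.
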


\subsection{Examples}
Now we will see some examples in which $\M^\textup{ss}_{\mathbb{P}^2,H}(v)$ can be determined more or less explicitly using the isomorphisms of Theorems \ref{Thm: isom sh=qui on P2 (1)} and \ref{Thm: isom sh=qui on P2 (2)}.

The first observation is that we can choose $v\in K_0(\mathbb{P}^2)$ so that at least one of the invariants $d^v_{-1},d^v_1,{d'}^v_{-1},{d'}^v_1$ vanish (and via equations \eqref{Eq: trans coord K0(P2)} and \eqref{Eq: trans coord K0(P2), 2} each of these conditions turns into a linear relation on $\rk v$, $\deg v$ and $\chi(v)$). In these cases, the representations of $B_3$ under consideration reduce to representations of the Kronecker quiver $K_3$, the relations $J$ and $J'$ are trivially satisfied and in any case the stability conditions reduce to the standard one for Kronecker modules. This means that $\M^\textup{ss}_{\mathbb{P}^2,H}(v)$ is isomorphic to some Kronecker moduli space $K(3;m,n)$, for which we can use the properties listed in Ex.\ \ref{Ex: Kron modules}. More precisely, as special cases of Theorems \ref{Thm: isom sh=qui on P2 (1)} and \ref{Thm: isom sh=qui on P2 (2)} we have:

\begin{corollary}\label{Cor: isom mod sh P2 Kron mod}
First, let $v\in\mathpzc{R}_H$:
	\begin{itemize}
	\item[a)] if $d_{-1}^v=\rk v-2\deg v-\chi(v)=0$, then $\M^\textup{ss}_{\mathbb{P}^2,H}(v)\simeq K(3;d_0^v,d_1^v)$;
	\item[b)] if $d_1^v=\rk v+\deg v+\chi(v)=0$, then $\M^\textup{ss}_{\mathbb{P}^2,H}(v)\simeq K(3;d_{-1}^v,d_0^v)$.
	\end{itemize}
Now let $v\in\tilde{\mathpzc{R}}_{H,\mathfrak{E}'}$:
\begin{itemize}
	\item[c)] if ${d'}_{-1}^v=\rk v+2\deg v-\chi(v)=0$, then $\M^\textup{ss}_{\mathbb{P}^2,H}(v)\simeq K(3;{d'}_0^v,{d'}_1^v)$;
	\item[d)] if ${d'}_1^v=-\chi(v)=0$, then $\M^\textup{ss}_{\mathbb{P}^2,H}(v)\simeq K(3;{d'}_{-1}^v,{d'}_0^v)$.
	\end{itemize}
Similar isomorphisms hold for the stable loci.
\end{corollary}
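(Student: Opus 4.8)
The plan is to derive all four statements from the isomorphisms already furnished by Theorems \ref{Thm: isom sh=qui on P2 (1)} and \ref{Thm: isom sh=qui on P2 (2)}, by noting that the vanishing of an outer component of the dimension vector collapses the Beilinson quiver $B_3$ onto a Kronecker quiver $K_3$. For part (a) I would begin from the isomorphism $\M^\textup{ss}_{\mathbb{P}^2,H}(v)\simeq\M^\textup{ss}_{B_3,J,\theta_{\G,v}}(d^v)$ and fix a graded vector space with $\dimvec=d^v=(0,d^v_0,d^v_1)$. Since the left vertex carries the zero space, the representation space $R_{d^v}(B_3)=\Hom(V_{-1}\otimes Z,V_0)\oplus\Hom(V_0\otimes Z,V_1)$ loses its first summand and becomes precisely the Kronecker representation space $\Hom(V_0\otimes Z,V_1)$ on the two surviving vertices, while $G_{d^v}=\{1\}\times\GL(V_0)\times\GL(V_1)$ is the associated Kronecker group. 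Every generator $b_ia_j+b_ja_i$ of $J$ is a path through the left vertex and hence acts by zero on any representation with $V_{-1}=0$, so the relation scheme $X_J$ is all of $R_{d^v}(B_3)$ and imposes no condition. The two GIT problems therefore coincide, yielding $\M^\textup{ss}_{B_3,J,\theta_{\G,v}}(d^v)\simeq\M^\textup{ss}_{K_3,\bar\theta}(d^v_0,d^v_1)$, where $\bar\theta=(\theta_{\G,v}^0,\theta_{\G,v}^1)$ is the restriction of the weight to vertices $0,1$.

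The substantive step, which I expect to be the main obstacle, is to show that $\bar\theta$ lies in the standard chamber of King stability for $K_3$, so that the last quotient is $K(3;d^v_0,d^v_1)$ in the notation of Example \ref{Ex: Kron modules}. On a two-vertex quiver every weight orthogonal to the fixed dimension vector is proportional to the standard Kronecker weight $(-d^v_1,d^v_0)$, so it suffices to check that the proportionality constant is positive, i.e.\ that the target component $\theta_{\G,v}^1$ is a positive polynomial. Substituting the linear relation encoded by $d^v_{-1}=0$ into the explicit formulas \eqref{Eq: thetaM,thetachi} for $\theta_{\M,v},\theta_{\chi,v}$ rewrites $\theta_{\G,v}^1$ as a degree-one polynomial whose leading and constant coefficients are positive combinations of $\rk v-\deg v$ and $\rk v+\deg v$; both of these are positive precisely because $v\in\mathpzc{R}_H$ imposes $|\deg v|<\rk v$. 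Hence $\theta_{\G,v}^1$ has positive leading coefficient, $\bar\theta$ is numerically equivalent to $(-d^v_1,d^v_0)$, and the identification with $K(3;d^v_0,d^v_1)$ follows.

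Parts (b), (c), (d) run through the identical reduction, the only changes being bookkeeping: the defining vanishing conditions are read off as rows of the base-change matrices \eqref{Eq: trans coord K0(P2)} and \eqref{Eq: trans coord K0(P2), 2}, and the relevant target weight component is computed from \eqref{Eq: thetaM,thetachi} or \eqref{Eq: thetas on P2, 2}. In each case, after substituting the constraint, positivity of that component reduces to an inequality that is guaranteed either by the defining inequalities of $\mathpzc{R}_H$ (resp.\ $\tilde{\mathpzc{R}}_{H,\mathfrak{E}'}$) or by the nonnegativity of the surviving dimension components, which holds whenever a semistable object exists; for instance in part (b) the constraint forces $d^v_{-1}=\deg v\geq 0$, so that the source component $\theta_{\G,v}^0$ is a positive multiple of $\deg v$. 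Since the whole argument — the collapse of $B_3$ to $K_3$, the vanishing of the relations, and the comparison of weights — respects stability as well as semistability, the statements for the stable loci $K_\textup{st}(3;\cdot,\cdot)$ are obtained verbatim by replacing $\M^\textup{ss}$ with $\M^\textup{st}$ throughout.
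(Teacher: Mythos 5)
Your proposal is correct and follows essentially the same route as the paper, which derives the corollary directly from Theorems \ref{Thm: isom sh=qui on P2 (1)} and \ref{Thm: isom sh=qui on P2 (2)} by observing that when an outer component of the dimension vector vanishes the representations reduce to $K_3$-modules, the relations $J$, $J'$ are trivially satisfied, and the stability condition reduces to the standard one for Kronecker modules. Your write-up merely makes explicit the chamber check for the restricted weight, which the paper leaves implicit.
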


Recall also that twisting by $\mathcal{O}_{\mathbb{P}^2}(1)$ gives isomorphic moduli spaces. In the examples we will only consider classes $v$ normalized as before, that is belonging to the regions $\mathpzc{R}_H$ or $\mathpzc{R}^\textup{G}_H{}'\cap\mathpzc{S}_H{}'$.

Since we have an isomorphism $K_0(\mathbb{P}^2)\overset{(\rk,\deg,\chi)}{\longrightarrow}\mathbb{Z}^3$, we will often use the notation
\begin{equation*}
\M^\textup{ss}_{\mathbb{P}^2,H}(\rk v,\deg v,\chi(v))\quad\textup{and}\quad
\M^\textup{st}_{\mathbb{P}^2,H}(\rk v,\deg v,\chi(v))
\end{equation*}
to indicate $\M^\textup{ss}_{\mathbb{P}^2,H}(v)$ and $\M^\textup{st}_{\mathbb{P}^2,H}(v)$.

\begin{examples}\ 
\begin{enumerate}
\item Let $r>0$ and $(d^v_{-1},d^v_0,d^v_1)=(0,r,0)$, so that $(\rk v,\deg v,\chi(v))=(r,0,r)$. For this choice there is a unique representation of $B_3$, which is always semistable, and stable only for $r=1$. $\M^\textup{ss}_{\mathbb{P}^2,H}(r,0,r)$ is a point, and $\M^\textup{st}_{\mathbb{P}^2,H}(r,0,r)$ is a point for $r=1$ and empty for $r>1$. So the only Gieseker-semistable sheaf with these invariants is the trivial bundle $\mathcal{O}_{\mathbb{P}^2}^{\oplus r}$.
\item Let $m>0$ and $({d'}^v_{-1},{d'}^v_0,{d'}^v_1)=(0,m,0)$, so that $(\rk v,\deg v,\chi(v))=(2m,-m,0)$. Again, $\M^\textup{ss}_{\mathbb{P}^2,H}(2m,-m,0)$ is a point, and $\M^\textup{st}_{\mathbb{P}^2,H}(2m,-m,0)$ is a point for $m=1$ and empty for $m>1$: the only Gieseker-semistable sheaf with these invariants is $\Omhol_{\mathbb{P}^2}^1(1)^{\oplus m}$.
\item Let $m$ be a positive integer. We have $\M^\textup{ss}_{\mathbb{P}^2,H}(5m,-2m,0)\simeq K(3;m,3m)\simeq\point$, with empty stable locus for $m>1$: the only Gieseker-semistable sheaf with these invariants is the right mutation $R_{\Omhol_{\mathbb{P}^2}^1(1)}\mathcal{O}_{\mathbb{P}^2}(-1)^{\oplus m}[1]$.
\item $\M^\textup{ss}_{\mathbb{P}^2,H}(2,0,0)\simeq K(3;2,2)\simeq\mathbb{P}^5$, having used Theorem \ref{Thm: isom sh=qui on P2 (2)} and Example \ref{Ex: Kron modules} (see also \cite[Ch.2, \S4.3]{OSS80Ve} for a sheaf-theoretical proof of this isomorphism).
\item Since $\Pic^0(\mathbb{P}^2)$ is trivial, by sending a 0-dimensional subscheme $Z\subset X$ of length $\ell$ to its ideal sheaf $\mathcal{I}_Z\subset\mathcal{O}_X$ we get an isomorphism $\Hilb^\ell(\mathbb{P}^2)\simeq\M^\textup{ss}_{\mathbb{P}^2,H}(1,0,1-\ell)$, where $\Hilb^\ell(\mathbb{P}^2)$ is the Hilbert scheme of $\ell$ points in $\mathbb{P}^2$. In particular, $\Hilb^1(\mathbb{P}^2)\simeq\mathbb{P}^2$ must be isomorphic (by using the above formulas to compute $d^v,\theta_{\G,v},{d'}^v,$ and $\theta'_{\G,v}$) to the moduli spaces $\M^\textup{ss}_{B_3,J,(-t,-1,t+3)}(1,3,1)$ and $\M^\textup{ss}_{B_3,J',(-t,t,1)}(1,1,0)$, and also to their stable loci.

We can obtain these isomorphisms directly from the representation theory of $B_3$: for the second isomorphism we just observe that
\begin{equation*}
\M^\textup{ss}_{B_3,J',(-t,t,1)}(1,1,0)=K(3;1,1)=K_\textup{st}(3;1,1)\simeq\G_1(3)=\mathbb{P}^2\,.
\end{equation*}
To see the isomorphism $\M^\textup{ss}_{B_3,J,(-t,-1,t+3)}(1,3,1)\simeq\mathbb{P}^2$, first note that
\begin{equation*}
\theta_{\G,v}=(-t,-1,t+3)\quad\textup{and}\quad \tilde\theta=(-1,-1,4)
\end{equation*}
are equivalent by looking at the walls in $(1,3,1)^\perp$ (see Figure \ref{Fig: ChB3}). Then by the symmetry $B_3\simeq B_3^{\op}$ we also see that $\M^\textup{ss}_{B_3,J,(-1,-1,4)}(1,3,1)\simeq\M^\textup{ss}_{B_3,J,(-4,1,1)}(1,3,1)$. So we are interested in understanding $(-4,1,1)$-stability for representations\\
\begin{center}\begin{tikzcd}
&\mathbb{C}\arrow[bend left=50]{r}{a_1}\arrow{r}{a_2}\arrow[bend right=50]{r}{a_3} &\mathbb{C}^3\arrow[bend left=50]{r}{b_1}\arrow{r}{b_2}\arrow[bend right=50]{r}{b_3} &\mathbb{C}
\end{tikzcd}\,.\end{center}
We also write $a=(a_1,a_2,a_3),b^t=(b_1^t,b_2^t,b_3^t)\in\M_3(\mathbb{C})$. Such a representation $(a,b)$ is $(-4,1,1)$-unstable if and only if it admits a subrepresentation of dimension $(1,2,1)$ or $(1,w_0,0)$ for some $w_0\in\{0,1,2,3\}$, and this happens if and only if $\rk a\leq 2$ or there is a $w_0$-dimensional subspace $W_0\subset\mathbb{C}^3$ such that $\im a\subset W_0\subset\ker b$.\\
Hence (note also that $(1,3,1)$ is $(-4,1,1)$-coprime) the $(-4,1,1)$-(semi)stable locus in $R:=R_{\mathbb{C}\oplus\mathbb{C}^3\oplus\mathbb{C}}(B_3)\cong\M_3(\mathbb{C})^{\oplus 2}$ is
\begin{equation*}
R^\textup{ss}=R^\textup{st}=\{(a,b)\in\M_3(\mathbb{C})^{\oplus 2}\ |\ \rk a=3\textup{ and }b\neq 0\}\,.
\end{equation*}
The map $R\to\M_3(\mathbb{C})$ given by $(a,b)\mapsto ba=(b_ja_i)_{i,j=1,2,3}$ descends to an isomorphism
\begin{equation}\label{Eq: isom MB3=P8}
\M^\textup{ss}_{B_3,(-4,1,1)}(1,3,1)=R^\textup{ss}/PG_{(1,3,1)}\to\mathbb{P}(\M_3(\mathbb{C}))\simeq\mathbb{P}^8\,.
\end{equation}
Finally, the relations $J$ cut down the subvariety $X_J=\{(a,b)\in R\ |\ a_ib_j+a_jb_i=0\}$, thus the previous isomorphism restricts to
\begin{equation*}
\M^\textup{ss}_{B_3,J,(-4,1,1)}(1,3,1)=(X_J\cap R^\textup{ss})/PG_{(1,3,1)}\simeq\mathbb{P}(\Ant_3(\mathbb{C}))\simeq\mathbb{P}^2\,,
\end{equation*}
where $\Ant_3(\mathbb{C})\subset\M_3(\mathbb{C})$ is the subspace of antisymmetric matrices.
\item For $({d'}^v_{-1},{d'}^v_0,{d'}^v_1)=(1,3,1)$ we have $\theta'_{\G,v}=(-2t+1,-2,2t+5)$, which is also equivalent to $\tilde\theta=(-1,-1,4)$ (see Figure \ref{Fig: ChB3}). Imposing the symmetric relations $J'$ instead, the isomorphism \eqref{Eq: isom MB3=P8} restricts to $\M^\textup{ss}_{B_3,J',(-4,1,1)}(1,3,1)\simeq\mathbb{P}(\Sym_3(\mathbb{C}))\simeq\mathbb{P}^5$, where $\Sym_3(\mathbb{C})\subset\M_3(\mathbb{C})$ is the subspace of symmetric matrices. Hence
\begin{equation*}
\M^\textup{ss}_{\mathbb{P}^2,H}(4,-5,1)\simeq\M^\textup{ss}_{B_3,J',(-2t+1,-2,2t+5)}(1,3,1)\simeq\mathbb{P}^5\,.
\end{equation*}
\end{enumerate}\end{examples}

\begin{figure}
\centering
\includegraphics[height=7cm]{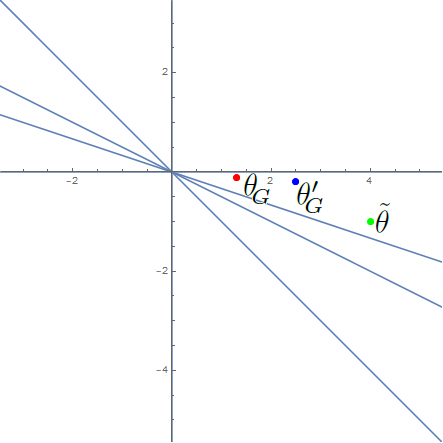}
\caption{The plane $(1,3,1)^\perp$ in $K_0(B_3)\simeq\mathbb{Z}^3$, represented with respect to the basis $\{(-1,0,1),(-3,1,0)\}$. The lines are the numerical walls, while the dots are the points $\theta_{G,(1,3,1)}=(-1,0,1)+\epsilon(3,-2,3)$, $\theta_{G,(1,3,1)}'=(-2,0,2)+\epsilon(1,-2,5)$ and $\tilde\theta:=(-1,-1,4)$, for $\epsilon=0.1$.}
\label{Fig: ChB3}
\end{figure}


%
%
\section{Application to \texorpdfstring{$\mathbb{P}^1\times\mathbb{P}^1$}{P1xP1}}\label{Appl to P1xP1}
Let $Z$ be a 2-dimensional $\mathbb{C}$-vector space and set $X:=\mathbb{P}_\mathbb{C}(Z)\times\mathbb{P}_\mathbb{C}(Z)$.

Recall that $\Pic(X)=\mathbb{Z}H\oplus\mathbb{Z}F$, where $H,F$ are inverse images of a point under the first and second projections $X\to\mathbb{P}^1$ respectively.\\
Take a divisor $A=aH+bF$. $A$ is ample if and only if $a,b$ are both positive, and by the Hirzebruch-Riemann-Roch formula \eqref{Eq: Hilb pol sh sur} we have
\begin{equation*}
P_{v,A}(t)=t^2ab\rk v+t(a\deg_Hv+b\deg_Fv+\rk v(a+b))+\chi(v)
\end{equation*}
and $\chi(v)=P_{v,A}(0)=\rk v+\deg_Hv+\deg_Fv+\ch_2 v$. 

Consider the exceptional collections
\begin{equation*}
\begin{array}{c}
\mathfrak{E}=(E_{(0,-1)},E_{(0,0)},E_{(1,-1)},E_{(1,0)}):=(\mathcal{O}_X(0,-1)[-1],\mathcal{O}_X[-1],\mathcal{O}_X(1,-1),\mathcal{O}_X(1,0))\,,\\
{}^\vee\!\mathfrak{E}=({}^\vee\!E_{(1,0)},{}^\vee\!E_{(1,-1)},{}^\vee\!E_{(0,0)},{}^\vee\!E_{(0,-1)})=
(\mathcal{O}_X(1,0),\mathcal{O}_{\mathbb{P}^1}(1)\boxtimes\uptau_{\mathbb{P}^1}(-1),
\uptau_{\mathbb{P}^1}\boxtimes\mathcal{O}_{\mathbb{P}^1},\uptau_{\mathbb{P}^1}\boxtimes\uptau_{\mathbb{P}^1}(-1))
\end{array}
\end{equation*}
seen in Example \ref{Exs: exc coll}(2) (note that the objects of ${}^\vee\!\mathfrak{E}$ are isomorphic to $\mathcal{O}_X(1,0)$, $\mathcal{O}_X(2,0)$, $\mathcal{O}_X(1,1)$, and $\mathcal{O}_X(2,1)$). We apply Theorem \ref{Thm: Baer-Bondal} to the full strong collection ${}^\vee\!\mathfrak{E}$. We have now the tilting bundle $T:=\oplus_{i\in I}{}^\vee\!E_i$ (here $I=\{(0,-1),(0,0),(1,-1),(1,0)\}$) and its endomorphism algebra
\begin{equation*}
\End_{\mathcal{O}_X}(T)=\begin{pmatrix}
\mathbb{C} &  &  & \\ 
\mathbb{C}\otimes Z & \mathbb{C} &  & \\ 
Z\otimes\mathbb{C} & 0 & \mathbb{C} & \\ 
Z\otimes Z & Z\otimes\mathbb{C} & \mathbb{C}\otimes Z & \mathbb{C}
\end{pmatrix}\,.
\end{equation*}
Choosing a basis $\{e_1,e_2\}$ of $Z$, $\End_{\mathcal{O}_X}(T)$ identifies with the opposite of the bound quiver algebra $\mathbb{C}Q_4/J$, where\\
\begin{center}$Q_4$: \begin{tikzcd}
&(0,0)\arrow[shift left]{dr}{b^1_1}\arrow[shift right,swap]{dr}{b^1_2} &\\
(0,-1)\arrow[shift left]{ur}{a^1_1}\arrow[shift right,swap]{ur}{a^1_2}\arrow[shift left]{dr}{a^2_1}\arrow[shift right,swap]{dr}{a^2_2} &&(1,0)\\
&(1,-1)\arrow[shift left]{ur}{b^2_1}\arrow[shift right,swap]{ur}{b^2_2}&
\end{tikzcd}\end{center}
and $J=(b^1_ia^1_j+b^2_ja^2_i,\ i=1,2)$. So we have again an equivalence
\begin{equation*}
\Psi:=\Phi_{{}^\vee\!\mathfrak{E}}[1]:D^b(X)\to D^b(Q_4;J)
\end{equation*}
which sends a complex $\mathcal{F}^\bullet\in D^b(X)$ to the complex of representations\\
\begin{center}\begin{tikzcd}[column sep=tiny]
&&R\Hom_{\mathcal{O}_X}(\mathcal{O}_X(2,0),\mathcal{F}^\bullet)[1]\arrow[shift left]{dr}\arrow[shift right,swap]{dr} &\\
&R\Hom_{\mathcal{O}_X}(\mathcal{O}_X(2,1),\mathcal{F}^\bullet)[1]\arrow[shift left]{ur}\arrow[shift right,swap]{ur}\arrow[shift left]{dr}\arrow[shift right,swap]{dr}&&R\Hom_{\mathcal{O}_X}(\mathcal{O}_X(1,0),\mathcal{F}^\bullet)[1]\\
&&R\Hom_{\mathcal{O}_X}(\mathcal{O}_X(1,1),\mathcal{F}^\bullet)[1]\arrow[shift left]{ur}\arrow[shift right,swap]{ur}&
\end{tikzcd}\end{center}
and the standard heart in $D^b(Q_4;J)$ corresponds to the heart
\begin{equation*}
\mathcal{K}:=\langle\mathcal{O}_X(0,-1)[1],\mathcal{O}_X,\mathcal{O}_X(1,-1),\mathcal{O}_X(1,0)[-1]\rangle_{\ext}\,,
\end{equation*}
whose objects are \emph{Kronecker complexes}
\begin{equation*}
K_V:V_{0,-1}\otimes\mathcal{O}_X(0,-1)\to V_{0,0}\otimes\mathcal{O}_X\oplus V_{1,-1}\otimes\mathcal{O}_X(1,-1)\to V_{1,0}\otimes\mathcal{O}_X(1,0)
\end{equation*}
with the middle bundle in degree $0$. Also in this case we see immediately that $\mathfrak{E}$ is always monad-friendly with respect to $A$ (Def.\ \ref{Defn: mod-fr exc coll}).

Let $\psi:K_0(X)\to K_0(Q_4;J)$ be the isomorphism induced by the equivalence $\Psi$; we have coordinates on these Grothendieck groups given by the isomorphisms
\begin{equation*}
K_0(X)\overset{(\rk,\deg_H,\deg_F,\chi)}{\longrightarrow}\mathbb{Z}^4\,,\quad
K_0(Q_4;J)\overset{\dimvec}{\longrightarrow}\mathbb{Z}^4\,,
\end{equation*}
and as usual we write
\begin{equation*}
(d^v_{0,-1},d^v_{0,0},d^v_{1,-1},d^v_{1,0})=d^v:=\dimvec\psi(v)
\end{equation*}
for the coordinates of $\psi(v)\in K_0(Q_4;J)$ with respect to the basis of simple representations $S(i)$, where $i\in I=\{(0,-1),(0,0),(1,-1),(1,0)\}$; these are mapped to the objects $\mathcal{O}_X(0,-1)[1]$, $\mathcal{O}_X$, $\mathcal{O}_X(1,-1)$, and $\mathcal{O}_X(1,0)[-1]$, so we find the transformations
\begin{equation*}
\begin{pmatrix}
d^v_{0,-1}\\
d^v_{0,0}\\
d^v_{1,-1}\\
d^v_{1,0}
\end{pmatrix}=
\begin{pmatrix}
1 &1 &2 &-1\\
2 &0 &2 &-1\\
1 &1 &1 &-1\\
1 &0 &1 &-1
\end{pmatrix}
\begin{pmatrix}
\rk v\\
\deg_H v\\
\deg_F v\\
\chi(v)
\end{pmatrix}\,,\quad
\begin{pmatrix}
\rk v\\
\deg_H v\\
\deg_F v\\
\chi(v)
\end{pmatrix}=
\begin{pmatrix}
-1& 1& 1& -1\\
0 &0 &1 &-1\\
1 &0 &-1 &0\\
0 &1 &0 &-2
\end{pmatrix}
\begin{pmatrix}
d^v_{0,-1}\\
d^v_{0,0}\\
d^v_{1,-1}\\
d^v_{1,0}
\end{pmatrix}\,.
\end{equation*}

The arrays $\theta_{\M,v},\theta_{\chi,v}\in\mathbb{Z}^I$ of equation \eqref{Eq: thetaM,thetachi} are given by
\begin{equation*}
\theta_{\M,v}=
\begin{pmatrix}
-\deg_Av-b\rk v\\
\deg_Av\\
\deg_Av-(a-b)\rk v\\
-\deg_Av+a\rk v
\end{pmatrix}\,,\quad
\theta_{\chi,v}=
\begin{pmatrix}
-\chi(v)\\
-\rk v+\chi(v)\\
\chi(v)\\
2\rk v-\chi(v)
\end{pmatrix}\,.
\end{equation*}

The region $\mathpzc{R}_A\subset K_0(X)$ of eq.\ \eqref{Eq: sets RA,RGA} reads
\begin{equation*}
\begin{array}{c}
\mathpzc{R}_A=\{v\in K_0(X)\ |\ \rk v>0,\ -b\rk v<\deg_A(v)<a\rk v\}\,.\\
\end{array}
\end{equation*}
Given $x=([z_1],[z_2])\in X$, take $p_1,p_2\in Z^\vee$ vanishing on $z_1$ and $z_2$ respectively. We have $K_x\simeq\mathcal{O}_x[-1]$, where now
\begin{equation*}
K_x:\mathcal{O}_X(0,-1)\overset{\binom{p_1}{p_2}}{\to}\mathcal{O}_X\oplus\mathcal{O}_X(1,-1)\overset{\binom{p_2}{-p_1}}{\to}\mathcal{O}_X(1,0)\,.
\end{equation*}
Then we can check that $\mathpzc{S}^\circ_A=\mathpzc{R}_A$, and twisting by line bundles we can bring any sheaf of positive rank inside this region. Hence Corollary \ref{Cor: isom mod st/sp} describes again all moduli spaces of semistable sheaves of positive rank:

\begin{theorem}
Let $v\in\mathpzc{R}_A$. We have isomorphisms
\begin{equation*}
\M^\textup{ss}_{X,A}(v)\simeq\M^\textup{ss}_{Q_4,J,\theta_{\G,v}}(d^v)\quad\textup{and}\quad
\M^\textup{st}_{X,A}(v)\simeq\M^\textup{st}_{Q_4,J,\theta_{\G,v}}(d^v)\,.
\end{equation*}
\end{theorem}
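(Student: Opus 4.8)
The plan is to deduce this theorem as a direct application of Corollary \ref{Cor: isom mod st/sp}: the general machinery of \S\ref{Gies-quiv stab surf} already produces the desired isomorphisms for every $v$ in the region $\tilde{\mathpzc{R}}_{A,\mathfrak{E}}=\mathpzc{R}^\textup{G}_A\cap\mathpzc{S}_A$, so it suffices to check two things, namely that the collection $\mathfrak{E}$ is monad-friendly (Def.\ \ref{Defn: mod-fr exc coll}) and that $\mathpzc{R}_A\subseteq\tilde{\mathpzc{R}}_{A,\mathfrak{E}}$. Once these are in place the statement is immediate, since Corollary \ref{Cor: isom mod st/sp} yields both the semistable and the stable isomorphisms on $\tilde{\mathpzc{R}}_{A,\mathfrak{E}}$.

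First I would verify monad-friendliness. Condition (A1) is immediate, because the members of ${}^\vee\!\mathfrak{E}$ are the line bundles $\mathcal{O}_X(1,0),\mathcal{O}_X(2,0),\mathcal{O}_X(1,1),\mathcal{O}_X(2,1)$, and any line bundle on $X$ is locally free and (having rank one) Gieseker-stable with respect to any polarization. For (A2) I would invoke the block structure recorded after Def.\ \ref{Defn: mod-fr exc coll}: the generators $\mathcal{O}_X(0,-1)[1]$, $\{\mathcal{O}_X,\mathcal{O}_X(1,-1)\}$ and $\mathcal{O}_X(1,0)[-1]$ of $\mathcal{K}$ sit in cohomological degrees $-1,0,1$, the two middle bundles are mutually orthogonal, and all positive Ext groups among the generators vanish; hence every object of $\mathcal{K}$ is represented by a Kronecker complex $K_V$ of locally free sheaves concentrated in degrees $-1,0,1$ of the displayed shape.

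The heart of the argument is the identification $\mathpzc{S}^\circ_A=\mathpzc{R}_A$, together with the inclusions $\mathpzc{R}_A\subseteq\mathpzc{R}^\textup{G}_A$ (built into the definitions \eqref{Eq: sets RA,RGA}) and $\mathpzc{S}^\circ_A\subseteq\mathpzc{S}_A$ (Remark \ref{Rmk: srt cond hyp lemmas}); combining these gives $\mathpzc{R}_A=\mathpzc{S}^\circ_A\subseteq\mathpzc{R}^\textup{G}_A\cap\mathpzc{S}_A=\tilde{\mathpzc{R}}_{A,\mathfrak{E}}$, exactly the containment required. To prove $\mathpzc{S}^\circ_A=\mathpzc{R}_A$ I would analyze the point complex $K_x\simeq\mathcal{O}_x[-1]$, whose dimension vector at the vertices $(0,-1),(0,0),(1,-1),(1,0)$ is $(1,1,1,1)$. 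Since the maps $a^1,a^2,b^1,b^2$ of $K_x$ are all nonzero for every $x\in X$ (they are governed by the nonvanishing linear forms $p_1,p_2$), an arrow-closed subspace cannot contain the source vertex $(0,-1)$ unless it is all of $K_x$; the proper nonzero subrepresentations therefore have dimension vectors $(0,0,0,1)$, $(0,1,0,1)$, $(0,0,1,1)$ and $(0,1,1,1)$. Pairing each with $\theta_{\M,v}$ gives the four values $-\deg_Av+a\rk v$, $a\rk v$, $b\rk v$ and $\deg_Av+b\rk v$; the middle two are automatically positive because $a,b,\rk v>0$, while requiring the remaining two to be positive yields precisely $-b\rk v<\deg_Av<a\rk v$, that is $v\in\mathpzc{R}_A$. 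Thus $K_x$ is $\nu_{\M,v}$-stable for all $x$ if and only if $v\in\mathpzc{R}_A$, which is the claimed equality.

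With both hypotheses established, Corollary \ref{Cor: isom mod st/sp} applies and produces the two asserted isomorphisms. I expect the only genuinely non-formal step to be the subrepresentation analysis of $K_x$ and the resulting stability inequalities: one must confirm that the structure maps are nonzero uniformly in $x$ (so that no extra destabilizing subobjects appear at special points), using Prop.\ \ref{Prop: morphs to Vx} to match cohomological vanishing with the absence of maps to and from $K_x$, and check that the enumeration of arrow-closed subspaces is complete. Everything else is either purely formal (the region inclusions and the invocation of the Corollary) or already recorded in the preceding discussion.
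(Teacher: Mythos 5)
Your proposal is correct and follows essentially the same route as the paper: the paper likewise notes that $\mathfrak{E}$ is monad-friendly, computes $K_x$ explicitly, asserts the identity $\mathpzc{S}^\circ_A=\mathpzc{R}_A$ (which, combined with $\mathpzc{R}_A\subset\mathpzc{R}^\textup{G}_A$ and $\mathpzc{S}^\circ_A\subset\mathpzc{S}_A$, gives $\mathpzc{R}_A\subseteq\tilde{\mathpzc{R}}_{A,\mathfrak{E}}$), and then invokes Corollary \ref{Cor: isom mod st/sp}. Your enumeration of the arrow-closed subrepresentations of $K_x$ with dimension vectors $(0,0,0,1),(0,1,0,1),(0,0,1,1),(0,1,1,1)$ and the resulting inequalities $-b\rk v<\deg_Av<a\rk v$ correctly fills in the check that the paper leaves to the reader.
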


Like after Theorem \ref{Thm: isom sh=qui on P2 (1)}, we have some immediate remarks:
	\begin{enumerate}
	\item If for $v\in\mathpzc{R}_A$ the dimension vector $d^v$ is $\theta_{\G,v}$-coprime, then $\gcd(\rk v,\deg_Av,\chi(v))=1$. In this case $\M^\textup{ss}_{X,A}(v)=\M^\textup{st}_{X,A}(v)$ and there is a universal family (by Remarks \ref{Rmk: prop mod sp sh surf} and \ref{Rmk: prop quiv mod}).
	\item As we observed for $\mathbb{P}^2$, also in this case $\M^\textup{st}_{X,A}(v)\simeq\M^\textup{st}_{Q_4,J,\theta_{\G,v}}(d^v)$ is smooth and its dimension is $\dim\M^\textup{st}_{Q_4,\theta_{\G,v}}(d^v)$ minus the number $4d^v_{0,-1}d^v_{1,0}$ of relations imposed, which gives
	\begin{equation*}
	\dim\M^\textup{st}_{X,A}(v)=1-\rk v^2+\Delta(v)\,,
	\end{equation*}
	in agreement with eq.\ \eqref{Eq: dim ms sh sur}.
	\item For all $v\in\mathpzc{R}_A$ we have $\theta_{\M,v}^{(0,-1)}=-b\rk v-\deg_Av<0$ and $\theta_{\M,v}^{(1,0)}=a\rk v-\deg_A v>0$.
	\item Notice that in this case there may be semistable Kronecker complexes in classes $w\in K_0(X)$ such that $P_{w,A}=0$, so we do not have an analogue of Theorem \ref{Thm: equiv ab cat semist sh and quiv rep P2}.
	\end{enumerate}

\begin{examples} We use the notation $\M^{\textup{ss}/\textup{st}}_{X,A}(\rk v,\deg_Hv,\deg_Fv,\chi(v)):=\M^{\textup{ss}/\textup{st}}_{X,A}(v)$.
\begin{enumerate}
\item Let $r$ be a positive integer. Taking $d^v=(0,r,0,0)$ we get $\M^\textup{ss}_{X,A}(r,0,0,r)=\{\mathcal{O}_X^{\oplus r}\}$, while for $d^v=(0,0,r,0)$ we find $\M^\textup{ss}_{X,A}(r,r,-r,0)=\{\mathcal{O}_X(1,-1)^{\oplus r}\}$.
\item Let $\ell$ be a positive integer. The choice $d^v=(\ell,\ell+1,\ell,\ell)$ gives the Hilbert scheme of points:
\begin{equation*}
\Hilb^\ell(X)=\M^\textup{ss}_{X,A}(v)\simeq\M^\textup{ss}_{Q_4,J,\theta_{\G,v}}(\ell,\ell+1,\ell,\ell)\,,
\end{equation*}
where $\theta_{\G,v}=(-tb+(\ell-1),-\ell,t(b-a)+(1-\ell),ta+(\ell+1))$.
\item If we choose $v\in K_0(X)$ with at least one between $d^v_{0,-1}$ and $d^v_{1,0}$ vanishing, then the representations we are considering reduce to representations of the quivers\\
\begin{tikzcd}
&&(0,0)\arrow[shift left]{dr}{b^1_1}\arrow[shift right,swap]{dr}{b^1_2} &\\
&&&(1,0)\\
&&(1,-1)\arrow[shift left]{ur}{b^2_1}\arrow[shift right,swap]{ur}{b^2_2}&
\end{tikzcd}\,,
\begin{tikzcd}
&&(0,0)&\\
&(0,-1)\arrow[shift left]{ur}{a^1_1}\arrow[shift right,swap]{ur}{a^1_2}\arrow[shift left]{dr}{a^2_1}\arrow[shift right,swap]{dr}{a^2_2} &&\\
&&(1,-1)&
\end{tikzcd}\\
respectively, and the relations $J$ are trivially satisfied. These are the cases considered in \cite{Kule97OnMod}.
\end{enumerate}\end{examples}	

\section*{Acknowledgements}
I would like to thank my supervisor Ugo Bruzzo for introducing me to algebraic geometry and in particular to this topic, and for his help throughout this work. I am also very grateful to Matteo Caorsi, Alexander Kuznetsov, Emanuele Macr\`{i}, Markus Reineke, Francesco Sala, Francesco Sorrentino and Jacopo Stoppa for useful discussions on these and related subjects, and to the staff of the Math Department at the University of Pennsylvania for their kind hospitality during my stay.
This research was partly supported by GNSAGA-INdAM.


\bibliographystyle{alpha}

\bibliography{Bib-CatHomAlg,Bib-ReprThe,Bib-AlgGeo,Bib-Moduli,Bib-VarPap}

\vspace{0.5cm}
\textsc{SISSA, via Bonomea 265, 34136 Trieste, Italy}\par\nopagebreak
\textsc{INFN, Sezione di Trieste, Italy}\par\nopagebreak
  \textit{E-mail address:} \texttt{and.mai.math@gmail.com}

\end{document}